\definecolor{mydarkblue}{rgb}{0,0.08,0.45}
\newtheoremstyle{mystyle}
  {}
  {}
  {\itshape}
  {}
  {\bfseries}
  {.}
  { }
  {\thmname{#1}\thmnumber{ #2}\thmnote{ (#3)}}
\theoremstyle{mystyle}
\newtheorem{theorem}{Theorem}
\newtheorem{lemma}[theorem]{Lemma}
\newtheorem{corollary}[theorem]{Corollary}
\newtheorem{defn}[theorem]{Definition}
\title{Near Optimal Reconstruction of Spherical Harmonic Expansions}
\date{}
\author{
Amir Zandieh\\MPI-Informatics\\$\mathtt{azandieh@mpi}$-$\mathtt{inf.mpg.de}$
\and
Insu Han\\Yale University\\$\mathtt{insu.han@yale.edu}$
\and
Haim Avron\\Tel Aviv University\\$\mathtt{haimav@tauex.tau.ac.il}$
}
\newtheorem{claim}{Claim}
\newtheorem{problm}{Problem}
\newtheorem*{theorem*}{Theorem}
\newcommand{\trace}{\mathrm{trace}}
\newcommand{\inner}[1]{\left \langle {#1} \right \rangle}
\newcommand{\bigo}{\mathcal{O}}
\newcommand{\abs}[1]{\left |#1\right|}
\def\argmin{\mathop{\rm arg~min}}
\def\0{{\bm 0}}
\def\d{{\bm d}}
\def\f{{\bm f}}
\def\u{{\bm u}}
\def\v{{\bm v}}
\def\x{{\bm x}}
\def\y{{\bm y}}
\def\z{{\bm z}}
\def\K{{\bm K}}
\def\P{{\bm P}}
\def\Q{{\bm Q}}
\def\V{{\bm V}}
\def\Y{{\bm Y}}
\def\E{{\mathbb{E}}}
\def\Rc{\mathcal{R}}
\def\f{{\bm f}}
\def\Hc{\mathcal{H}}
\def\Kc{\mathcal{K}}
\def\RR{\mathbb{R}}
\def\SS{\mathbb{S}}
\newenvironment{proofof}[1]{\noindent{\bf Proof of #1:}}{\hfill $\blacksquare$ \par}
\renewcommand{\@biblabel}[1]{[#1]\hfill}
\renewcommand{\citet}[1]{\citep{#1}}
\begin{document}
\maketitle

\begin{abstract}%
We propose an algorithm for robust recovery of the spherical harmonic expansion of functions defined on the $d$-dimensional unit sphere $\SS^{d-1}$ using a near-optimal number of function evaluations. 
We show that for any $f\in L^2(\SS^{d-1})$, the number of evaluations of $f$ needed to recover its degree-$q$ spherical harmonic expansion equals the dimension of the space of spherical harmonics of degree at most $q$ up to a logarithmic factor. 
Moreover, we develop a simple yet efficient algorithm to recover degree-$q$ expansion of $f$ by only evaluating the function on uniformly sampled points on $\SS^{d-1}$. Our algorithm is based on the connections between spherical harmonics and Gegenbauer polynomials and leverage score sampling methods.
Unlike the prior results on fast spherical harmonic transform, our proposed algorithm works efficiently using a nearly optimal number of samples in any dimension $d$. 
We further illustrate the empirical performance of our algorithm on numerical examples.
\end{abstract}


\section{Introduction}
Interpolation is the fundamental problem of recovering a function from a finite number of (noisy) observations. To provide accurate and reliable predictions at unobserved points we need to avoid overfitting the target function which is typically achieved through restricting our interpolant to a family of \emph{smooth or structured} functions.
In this paper we focus on interpolating square-integrable functions on the $d$-dimensional unit sphere, with low-degree spherical harmonics.
Spherical harmonics are essential in various theoretical and practical applications, including the representation of electromagnetic fields~\cite{weimer1995models}, gravitational potential~\cite{werner1997spherical}, 
cosmic microwave background radiation~\cite{kamionkowski1997statistics} and medical imaging~\cite{choi2015flash}, as well as modelling of 3D shapes in computer graphics~\cite{kazhdan2003rotation}.

We begin by observing that any function $f$ in $L^2(\SS^{d-1})$, i.e., the family of square-integrable functions defined on the sphere $\SS^{d-1}$, can be uniquely decomposed into orthogonal spherical harmonic components.
Specifically, if we denote the space of spherical harmonics of degree $\ell$ in dimension $d$ by $\Hc_{\ell}(\SS^{d-1})$,
then any function $f \in L^2(\SS^{d-1})$ has a unique orthogonal expansion $f = \sum_{\ell=0}^\infty f_\ell$ with $f_\ell \in \Hc_{\ell}(\SS^{d-1})$ (see \cref{lem:spherical-harmonic-direct-sum-decompose}).
With this observation, we aim to solve the following problem of finding the best degree $\le q$ spherical harmonic approximation to $f$ using a minimal number of samples (by essentially treating the higher order terms in $f$'s expansion as noise).
\begin{problm}[Informal Version of \cref{interpolation-problem}]\label{interpolation-problem-1}
For an unknown function $f\in L^2(\SS^{d-1})$ and an integer $q \geq 1$, efficiently (both in terms of number of samples from $f$ and computations) learn the first $q+1$ spherical harmonic components $\left\{ f_\ell \in \Hc_{\ell}( \SS^{d-1}) \right\}_{\ell=0}^q$ of $f$ which minimizes
\begin{align}
    \int_{\SS^{d-1}} \abs{\sum_{\ell=0}^q f_\ell(w) - f(w)}^2 dw.
\end{align}
\end{problm}

The \emph{angular power spectrum} $\left\| f_\ell \right\|_{\SS^{d-1}}^2$ of $f$ commonly obeys a power law decay. 
In fact, for any infinitely differentiable $f$, $\left\| f_\ell \right\|_{\SS^{d-1}}^2$ decays asymptotically faster than any rational function of $\ell$. Furthermore, for any real analytic $f$ on the sphere, $\left\| f_\ell \right\|_{\SS^{d-1}}^2$ decays exponentially.
Thus, the first $q+1$ spherical harmonic components of $f$ should well approximate $f$ for even modest $q$, and answering  \cref{interpolation-problem-1} is significantly useful for a wide range of differentiable functions.

\subsection{Our Main Results}
We reformulate \cref{interpolation-problem-1} as a least-squares regression and then solve it using techniques from randomized numerical linear algebra.  To do so, we first consider an orthonormal projection operator that maps functions in $L^2(\SS^{d-1})$ onto the space of bounded-degree spherical harmonics $\bigoplus_{\ell=0}^q \Hc_\ell(\SS^{d-1})$. Specifically, if $\Kc^{(q)}_{d}$ is the projection operator that maps any function $f$ with spherical harmonic expansion $f = \sum_{\ell=0}^\infty f_\ell$ with $f_\ell \in \Hc_\ell(\SS^{d-1})$ to $\Kc^{(q)}_{d} f = \sum_{\ell=0}^q f_\ell$, \cref{interpolation-problem-1}
can be formulated as
\begin{align*}
    \min_{g \in L^2(\SS^{d-1})} \int_{\SS^{d-1}} \abs{ \left[ \Kc^{(q)}_{d} g \right](w) - f(w)}^2 dw.
\end{align*}
However, solving this regression problem with ``continuous'' cost function is challenging. To resolve this issue, we adopt the approach of \cite{avron2019universal} which discretizes the aforementioned regression problem according to the leverage function of the operator $\Kc_d^{(q)}$.
Specifically, if we can randomly draw samples with probability proportional to the leverage function then we can recover degree-$q$ spherical harmonic expansion of $f$, i.e. $\sum_{\ell=0}^q f_\ell$, with finite number of observations.  In particular, by exploiting the connections between spherical harmonics and \emph{Zonal (Gegenbauer) Harmonics} and the fact that zonal harmonics are the reproducing kernels of $\Hc_\ell(\SS^{d-1})$ (\cref{lem:decompose-Gegenbauer}), we prove that the leverage function of the operator $\Kc_d^{(q)}$ is constant.
Thus, solving a discrete regression problem with uniformly sampled observations yields near-optimal solution to \cref{interpolation-problem-1}. Our informal results are the following.

\begin{theorem}[Informal Version of \cref{thm:efficient-regression-alg}] \label{thrm:upper-bound-informal}
Let $\beta_{q,d}$ be the dimension of spherical harmonics of degree at most $q$, i.e., $\beta_{q,d} \equiv {\rm dim}\left( \bigoplus_{\ell=0}^q \Hc_\ell(\SS^{d-1}) \right)$.
There exists an algorithm that finds a $(1+\epsilon)$-approximation to the optimal solution of \cref{interpolation-problem-1}, given $s = {\bigo}(\epsilon^{-2} \beta_{q,d} \log \beta_{q,d})$ observations of $f$ at uniformly sampled points on $\SS^{d-1}$. Moreover, the algorithm runs in $\bigo(s^2 d + s^\omega)$\footnote{$\omega<2.3727$ is the exponent of the fast matrix multiplication algorithm~\cite{williams2012multiplying}} time.
\end{theorem}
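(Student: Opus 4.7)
The strategy is to cast the interpolation problem as an operator-valued least-squares problem, compute the associated leverage function in closed form, and invoke the leverage-score sampling framework of \cite{avron2019universal}. First I would rewrite \cref{interpolation-problem-1} as the regression $\min_{g\in L^2(\SS^{d-1})} \| \Kc_d^{(q)} g - f \|_{L^2(\SS^{d-1})}^2$, whose optimum equals $\sum_{\ell\le q} f_\ell$ by \cref{lem:spherical-harmonic-direct-sum-decompose}. The framework of \cite{avron2019universal} replaces this continuous regression by a discrete one obtained by sampling points on $\SS^{d-1}$ with probabilities proportional to the leverage function of $\Kc_d^{(q)}$, and shows that $\widetilde{O}(\beta_{q,d}/\epsilon^{2})$ samples suffice for a $(1+\epsilon)$-approximation.

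\textbf{Key calculation: the leverage function is constant.} The range of $\Kc_d^{(q)}$ is the finite-dimensional RKHS $\bigoplus_{\ell=0}^{q}\Hc_{\ell}(\SS^{d-1})$, so the leverage function at a point $w$ equals the reproducing kernel on the diagonal, $K_q(w,w)$. By \cref{lem:decompose-Gegenbauer}, each $\Hc_\ell(\SS^{d-1})$ has reproducing kernel a zonal (Gegenbauer) harmonic $Z^{(\ell)}(w,w') = c_{\ell,d}\, C_{\ell}^{(\alpha)}(\langle w,w'\rangle)$, and at $w'=w$ we have $\langle w,w\rangle = 1$, giving $Z^{(\ell)}(w,w) = \dim(\Hc_\ell(\SS^{d-1}))/\vol(\SS^{d-1})$, independent of $w$. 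Summing, $K_q(w,w) = \beta_{q,d}/\vol(\SS^{d-1})$, so the leverage function of $\Kc_d^{(q)}$ is flat on the sphere. Consequently, the importance-sampling distribution prescribed by \cite{avron2019universal} coincides with the \emph{uniform} distribution on $\SS^{d-1}$.

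\textbf{Algorithm and runtime.} Draw $s = O(\epsilon^{-2}\beta_{q,d}\log \beta_{q,d})$ i.i.d.\ uniform samples $w_1,\dots,w_s \in \SS^{d-1}$, query the oracle to obtain $f(w_1),\dots,f(w_s)$, form the $s\times \beta_{q,d}$ design matrix whose rows contain an orthonormal basis of $\bigoplus_{\ell=0}^{q}\Hc_\ell(\SS^{d-1})$ evaluated at each sample, and solve the resulting discrete least-squares problem. Evaluating all spherical harmonics up to degree $q$ at a single point is done via the three-term recurrence for Gegenbauer polynomials together with an $O(d)$ inner-product computation, costing $O(d + \beta_{q,d})$ per sample and $O(s\,\beta_{q,d} + s d) = O(s^2 + sd)$ in total; solving the overdetermined least-squares system then takes $O(s^\omega)$ time, giving the claimed $O(s^2 d + s^\omega)$ bound. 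Combining with the leverage-score sampling guarantee of \cite{avron2019universal} yields the $(1+\epsilon)$-approximation.

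\textbf{Main obstacle.} The conceptual heart is the closed-form leverage calculation: without the zonal/Gegenbauer identity there is no a priori reason uniform sampling should suffice, and the power-law increase in $\dim \Hc_\ell(\SS^{d-1})$ with $\ell$ would naively suggest a highly non-uniform sampling measure. The technical subtlety is ensuring that \cite{avron2019universal}'s guarantees, stated for projection-type operators in the ridge-leverage/universal sampling setting, apply cleanly with the constant leverage weight and that the residual $f - \Kc_d^{(q)} f$ (the discarded high-degree harmonics) enters the error bound only through the optimal value of the regression, which is a standard bias/variance argument within their framework.
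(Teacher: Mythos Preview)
Your plan and the leverage-function calculation are essentially the paper's: recast \cref{interpolation-problem-1} as $\min_g\|\Kc_d^{(q)}g-f\|^2$, observe that the leverage function equals the diagonal of the reproducing kernel $\sum_{\ell\le q}\alpha_{\ell,d}P_d^\ell(\langle w,w\rangle)/|\SS^{d-1}|=\beta_{q,d}/|\SS^{d-1}|$, conclude that uniform sampling is the right leverage distribution, and invoke the framework of \cite{avron2019universal}. That part is fine, and your RKHS-diagonal argument is if anything cleaner than the paper's min/max characterization.

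The gap is in the algorithm. You propose to ``form the $s\times\beta_{q,d}$ design matrix whose rows contain an orthonormal basis of $\bigoplus_{\ell\le q}\Hc_\ell(\SS^{d-1})$ evaluated at each sample,'' and claim this costs $O(d+\beta_{q,d})$ per sample via the Gegenbauer three-term recurrence. But the Gegenbauer recurrence only produces the \emph{zonal} harmonics $P_d^\ell(\langle\cdot,v\rangle)$ for a fixed pole $v$, i.e.\ one function per degree $\ell$, not the full $\alpha_{\ell,d}$-dimensional orthonormal basis of $\Hc_\ell(\SS^{d-1})$. As the paper stresses (see the remark after \cref{lem:spherical-harmonic-direct-sum-decompose} and the discussion in the related-work section, citing \cite{minh2006mercer}), computing an explicit orthonormal basis of $\Hc_\ell(\SS^{d-1})$ is in general intractable for $d>3$; this is precisely the obstacle that prevents the classical fast spherical harmonic transforms from extending beyond $d=3$.

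The paper sidesteps this via the kernel trick: rather than forming the design matrix $\P$, it forms the $s\times s$ Gram matrix $\K=\P^*\P$ directly, with entries $\K_{i,j}=\sum_{\ell\le q}\tfrac{\alpha_{\ell,d}}{s}P_d^\ell(\langle w_i,w_j\rangle)$, which needs only inner products in $\RR^d$ and scalar Gegenbauer evaluations---no basis functions at all. The discrete least-squares solution is then $\z=\K^\dagger\f$, and the output is expressed as a linear combination of the zonal kernels $P_d^\ell(\langle w_j,\cdot\rangle)$. This is what yields the $O(s^2d+s^\omega)$ runtime in arbitrary dimension. Your proposal would recover the same sample complexity but not the claimed runtime; to fix it, replace the explicit-basis step with this kernelized solve.
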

We also prove that our bound on the number of required samples is optimal up to a logarithmic factor.

\begin{theorem}[Informal Version of \cref{thm:lower-bound}]
Any (randomized) algorithm that takes $s < \beta_{q,d}$ samples on any input fails with probability greater than $9/10$, where $\beta_{q,d} \equiv {\rm dim}\left( \bigoplus_{\ell=0}^q \Hc_\ell(\SS^{d-1}) \right)$.
\end{theorem}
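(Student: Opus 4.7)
The plan is to invoke Yao's minimax principle and reduce the problem to a short dimension-counting argument. Restrict attention to inputs $f$ already lying in $V := \bigoplus_{\ell=0}^q \Hc_\ell(\SS^{d-1})$; for such $f$ the optimum of \cref{interpolation-problem-1} is zero, so any $(1+\epsilon)$-approximation must return $f$ exactly. Fix an orthonormal basis $Y_1,\ldots,Y_{\beta_{q,d}}$ of $V$ and identify each $f\in V$ with its coefficient vector $c\in\RR^{\beta_{q,d}}$ via $f=\sum_i c_i Y_i$; exact recovery of $f$ is then equivalent to exact recovery of $c$ from $s$ point evaluations.

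By Yao, it suffices to exhibit a single distribution $\mu$ on $V$ under which every deterministic adaptive algorithm using $s<\beta_{q,d}$ evaluations recovers $c$ with probability at most $1/10$. The plan is to let $\mu$ be any measure on $\RR^{\beta_{q,d}}$ absolutely continuous with respect to Lebesgue measure, e.g.\ $\mathcal{N}(0,I_{\beta_{q,d}})$. The key observation is that for every fixed transcript $y\in\RR^s$, the queries $w_1,\ldots,w_s$ are determined (even under adaptivity they are fixed functions of the previously observed entries of $y$), so the set of coefficient vectors consistent with $y$ is the affine subspace $\{c:\sum_i c_i Y_i(w_j)=y_j,\ 1\leq j\leq s\}$ of dimension at least $\beta_{q,d}-s\geq 1$. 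On this entire slice the algorithm commits to a single output vector $\hat c(y)$, so at most one point per slice is a success. A Fubini-style decomposition across slices against the absolutely continuous density then yields $\mu(\text{success})=0$, i.e.\ failure probability $1>9/10$.

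The main obstacle I expect is the bookkeeping for genuinely adaptive algorithms, since the partition of $\RR^{\beta_{q,d}}$ into transcript-slices depends measurably on $y$ and the slices translate and rotate as $y$ varies. The cleanest workaround is to first prove the non-adaptive case: the observation map is then a single linear map $M:\RR^{\beta_{q,d}}\to\RR^s$ of rank at most $s$, the success set is contained in the image of $\hat c\circ M$ which factors through $\RR^s$, and Fubini over cosets of $\ker M$ gives the result immediately. Since \cref{thrm:upper-bound-informal} uses i.i.d.\ uniform samples, this non-adaptive lower bound already matches the upper bound up to the logarithmic factor; the fully adaptive case is then recovered by combining the per-transcript linearity above with measurability of the algorithm's query strategy, using that the success set injects into $\RR^s$ via $c\mapsto y(c)$ and hence must be $\mu$-null under any absolutely continuous $\mu$.
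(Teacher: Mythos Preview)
Your proposal is correct and mirrors the paper's proof: both apply Yao's principle with a standard Gaussian on the coefficient vector in $V=\bigoplus_{\ell\le q}\Hc_\ell(\SS^{d-1})$, note that $f\in V$ forces exact recovery, and argue that conditioned on any transcript of $s<\beta_{q,d}$ (adaptive) linear measurements the posterior still has a free Gaussian direction, so the success probability is zero. The paper formalizes the adaptive conditioning step via an explicit orthonormal change of basis (Claim~23 of \cite{avron2019universal}) rather than your Fubini-across-slices phrasing, and your closing ``success set injects into $\RR^s$'' remark is superfluous (and, as stated, not quite enough---injectivity into a lower-dimensional target does not by itself force $\mu$-nullity without regularity of the map), but the core argument is identical.
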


\subsection{Related Work}
Reconstruction of functions from small number of samples as per \cref{interpolation-problem-1} has been extensively studies in many areas of science and engineering. 
Prior results mainly consider reconstructing $1$-dimensional functions from a finite number of samples on a finite interval under the assumption that the underlying function is smooth or structured in some sense.
Notably, the influential line of work of~\cite{slepian1961prolate,landau1961prolate,landau1962prolate,xiao2001prolate} focuses on reconstructing Fourier-bandlimited functions and the work of \cite{chen2016fourier, erdelyi2020fourier} consider interpolating Fourier-sparse signals.
Recently, \citet{avron2019universal} unified the reconstruction methods in dimension $d=1$ and gave a universal sampling framework for reconstructing nearly all classes of functions with Fourier-based smoothness constraints.

One can view $1$-dimensional functions on a finite interval as function on the unit circle $\SS^1$. Thus, \cref{interpolation-problem-1} is indeed a generalization of prior works to reconstruction of functions on $\SS^{d-1}$ under the assumption that the \emph{generalized Fourier series} (\cref{lem:spherical-harmonic-direct-sum-decompose}) of the underlying function only contains bounded-degree spherical harmonics. This degree constraint on spherical harmonic expansions can be viewed as the $d$-dimensional analog of the Fourier-bandlimited function on circle $\SS^1$. 

Computing spherical harmonic expansions in dimension $d=3$ has received considerable attention in physics and applied mathematics communities. 
The algorithms for this special case of \cref{interpolation-problem-1} are known in the literature as ``fast spherical harmonic transform''~\cite{swarztrauber2000generalized, suda2002fast}. 
Most notably, \citet{rokhlin2006fast} proposed an algorithm for computing spherical harmonic expansion of degree $\le q$ to precision $\epsilon$ using $\bigo(\beta_{q,3})$ samples and $\bigo(\beta_{q,3}\log \beta_{q,3} \cdot \log (1/\epsilon))$ time. 
These fast algorithms were developed based on the fast Fourier transform and fast associated Legendre transform and require access to a (well-conditioned) orthogonal basis of $\Hc_\ell(\SS^{d-1})$, which happened to be the associated Legendre polynomials when $d=3$. 
However, it is in general intractable to compute an orthogonal basis for spherical harmonics~\cite{minh2006mercer}, so unlike our \cref{thrm:upper-bound-informal}, it is inefficient to extend these prior results to higher $d$.


\section{Mathematical Preliminaries}
We denote by $\SS^{d-1}$ the unit sphere in $d$ dimension.
We use $|\SS^{d-1}|=\frac{2\pi^{d/2}}{\Gamma(d/2)}$ to denote the surface area of the sphere $\SS^{d-1}$ and $\mathcal{U}(\SS^{d-1})$ to denote the uniform probability distribution on $\SS^{d-1}$.
We denote by $L^2\left(\SS^{d-1}\right)$ the set of all square-integrable real-valued functions on the sphere $\SS^{d-1}$.
Furthermore, for any $f,g \in L^2\left( \SS^{d-1} \right)$ we use the following definition of inner product on the unit sphere,
\begin{equation}\label{eq:inner-prod-def}
    \langle f, g \rangle_{\SS^{d-1}} \coloneqq \int_{\SS^{d-1}} f(w) g(w) dw = \left| \SS^{d-1} \right| \cdot \E_{w \sim \mathcal{U}(\SS^{\d-1})} [ f(w) g(w) ].
\end{equation}
The function space $L^2\left( \SS^{d-1} \right)$ is complete with respect to the norm induced by the inner product, i.e. $\|f\|_{\SS^{d-1}} \coloneqq \sqrt{\langle f, f \rangle_{\SS^{d-1}}}$, so $L^2\left( \SS^{d-1} \right)$ is a \emph{Hilbert space}. 

We often use the term \emph{quasi-matrix} which informally defines as a ``matrix'' in which one dimension is finite while the other is infinite. A quasi-matrix can be {\em tall} (or {\em wide}) in which there is a finite number of columns (or rows) where each one is a functional operator.  For more details and a formal definition, see~\cite{shustin2021semi}.

Our results are profoundly related to the \emph{Spherical Harmonics}, which are special functions defined on $\SS^{d-1}$ and are often employed in solving partial differential equations. {\em Harmonics} are solutions to the Laplace's equation on some domain. 
Spherical harmonics are the harmonics on a spherical domain, i.e. the solution of Laplace's equation in  spherical domains. Formally,

\begin{defn}[Spherical Harmonics]
For integers $\ell \ge 0$ and $d \ge 1$, let $\mathcal{P}_{\ell}(d)$ be the space of degree-$\ell$ homogeneous polynomials with $d$ variables and real coefficients.
Let $\Hc_\ell(d)$ denote the space of degree-$\ell$ harmonic polynomials in dimension $d$, i.e., homogeneous polynomial solutions of Laplace's equation:
\[ \Hc_\ell(d) := \{ P \in \mathcal{P}_{\ell}(d) : \Delta P = 0 \}, \]
where $\Delta = \frac{\partial^2}{\partial x_1^2} +  \cdots +\frac{\partial^2}{\partial x_d^2} $ is the Laplace operator on $\RR^{d}$.
Finally, let $\Hc_\ell\left(\SS^{d-1}\right)$ be the space of (real) \emph{Spherical Harmonics} of order $\ell$ in dimension $d$, i.e. restrictions of harmonic polynomials in $\Hc_\ell(d)$ to the sphere $\SS^{d-1}$.
The dimension of this space, $\alpha_{\ell,d} \equiv {\rm dim}\left( \Hc_{\ell}\left( \SS^{d-1}\right) \right)$, is
\[ \alpha_{0,d}=1, ~~~ \alpha_{1,d} = d, ~~~ \alpha_{\ell,d} = {d+\ell-1 \choose \ell} - {d+\ell-3 \choose \ell-2} ~~~\text{ for }\ell \ge 2.\]
\end{defn}

\subsection{Gegenbauer Polynomials} The {\em Gegenbauer} (a.k.a. \emph{ultraspherical}) {\em polynomial} of degree $\ell\ge 0$ in dimension $d\ge2$ is given by
\begin{align}
    P_{d}^\ell(t) := \sum_{j=0}^{\lfloor \ell/2 \rfloor} c_j \cdot t^{\ell-2j} \cdot (1 - t^2)^j,
\end{align}
where $c_0 = 1$ and $c_{j+1} = - \frac{(\ell - 2j)(\ell - 2j - 1)}{2(j+1)(d-1 + 2j)} \cdot c_j$ for $j = 0,1, \ldots, \lfloor \ell/2 \rfloor-1$. These polynomials satisfy the orthogonality properties on the interval $[-1,1]$ with respect to the measure $(1-t^2)^{\frac{d-3}{2}}$, i.e.,
\begin{equation} \label{Gegenbauer-Poly-inner-prod}
\int_{-1}^1 P_{d}^\ell(t) \cdot P_{d}^{\ell'}(t) \cdot (1-t^2)^{\frac{d-3}{2}} \, dt = 
\frac{\left| \SS^{d-1} \right|}{ \alpha_{\ell,d} \cdot \left| \SS^{d-2} \right|} \cdot \mathbbm{1}_{\{\ell = \ell'\}}.
\end{equation}

\paragraph{Zonal Harmonics.}
The Gegenbauer polynomials naturally provide positive definite dot-product kernels on $\SS^{d-1}$ known as \emph{Zonal Harmonics}, which are closely related to the spherical harmonics.
The following reproducing property of zonal harmonics plays a crucial role in our analysis.
\begin{restatable}[Reproducing Property of Zonal Harmonics]{lemma}{reproducingpropertygegenbauerkernels}
\label{claim-gegen-kernel-properties}
	Let $P_d^\ell(\cdot)$ be the Gengenbauer polynomial of degree $\ell$ in dimension $d$. For any $x,y \in \SS^{d-1}$:
	\begin{align*}
		P_d^\ell(\langle x, y \rangle) = \alpha_{\ell,d} \cdot \mathbb{E}_{w\sim \mathcal{U}(\SS^{d-1})} \left[ P_d^\ell\left( \langle x , w \rangle \right) P_d^\ell\left( \langle y , w \rangle \right) \right] ,
	\end{align*}
	Furthermore, for any $\ell' \neq \ell$: 
	\begin{align*}
		\mathbb{E}_{w\sim \mathcal{U}(\SS^{d-1})} \left[ P_d^\ell\left( \langle x , w \rangle \right) \cdot P_d^{\ell'}\left( \langle y , w \rangle \right) \right] = 0.
	\end{align*}
\end{restatable}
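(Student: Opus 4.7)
The plan is to reduce both identities to the addition formula for spherical harmonics, which represents the Gegenbauer polynomial $P_d^\ell(\langle x, y\rangle)$ as a sum over any orthonormal basis of $\Hc_\ell(\SS^{d-1})$. Equivalently, the kernel $(x,y)\mapsto \frac{\alpha_{\ell,d}}{|\SS^{d-1}|} P_d^\ell(\langle x,y\rangle)$ is the reproducing kernel of the finite-dimensional Hilbert space $\Hc_\ell(\SS^{d-1}) \subset L^2(\SS^{d-1})$; this is precisely the content of \cref{lem:decompose-Gegenbauer}. Concretely, once an orthonormal basis $\{Y_{\ell,1},\dots,Y_{\ell,\alpha_{\ell,d}}\}$ of $\Hc_\ell(\SS^{d-1})$ (with respect to $\langle\cdot,\cdot\rangle_{\SS^{d-1}}$) is fixed, we have
\[ P_d^\ell(\langle x, w\rangle) = \frac{|\SS^{d-1}|}{\alpha_{\ell,d}} \sum_{j=1}^{\alpha_{\ell,d}} Y_{\ell,j}(x)\,Y_{\ell,j}(w), \qquad \E_{w\sim\mathcal{U}(\SS^{d-1})}[Y_{\ell,j}(w) Y_{\ell',k}(w)] = \frac{\delta_{\ell\ell'}\,\delta_{jk}}{|\SS^{d-1}|}, \]
where the orthogonality across degrees uses the standard fact that $\Hc_\ell$ and $\Hc_{\ell'}$ are orthogonal in $L^2(\SS^{d-1})$ whenever $\ell\neq\ell'$.

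For the first identity, I would substitute the addition formula for both $P_d^\ell(\langle x,w\rangle)$ and $P_d^\ell(\langle y,w\rangle)$, expand the product inside the expectation, and collapse the resulting double sum using orthonormality of $\{Y_{\ell,j}\}_j$:
\begin{align*}
\alpha_{\ell,d}\cdot \E_{w\sim \mathcal{U}(\SS^{d-1})}\!\left[P_d^\ell(\langle x, w\rangle)\, P_d^\ell(\langle y, w\rangle)\right]
&= \alpha_{\ell,d}\cdot \frac{|\SS^{d-1}|^2}{\alpha_{\ell,d}^2}\sum_{j,k} Y_{\ell,j}(x)\, Y_{\ell,k}(y)\, \E_{w}[Y_{\ell,j}(w) Y_{\ell,k}(w)] \\
&= \frac{|\SS^{d-1}|}{\alpha_{\ell,d}}\sum_{j} Y_{\ell,j}(x)\, Y_{\ell,j}(y) \;=\; P_d^\ell(\langle x, y\rangle),
\end{align*}
the last equality being a second invocation of the addition formula. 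For the orthogonality claim with $\ell\neq\ell'$, the same substitution at the two distinct degrees yields a sum $\sum_{j,k} Y_{\ell,j}(x)\, Y_{\ell',k}(y)\, \E_{w}[Y_{\ell,j}(w) Y_{\ell',k}(w)]$, and every term vanishes by the cross-degree orthogonality noted above.

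The only genuine obstacle is the addition formula itself, which is supplied by \cref{lem:decompose-Gegenbauer} and may therefore be assumed. Should one instead need to prove it from scratch, the standard route is to observe that for each fixed $x\in\SS^{d-1}$ the zonal function $w\mapsto P_d^\ell(\langle x,w\rangle)$ is a degree-$\ell$ spherical harmonic (it is invariant under rotations fixing $x$ and arises from a degree-$\ell$ polynomial), hence lies in $\Hc_\ell(\SS^{d-1})$, and then to identify its coefficients in the $\{Y_{\ell,j}\}$ basis via the orthogonality relation \eqref{Gegenbauer-Poly-inner-prod} on $[-1,1]$, using the fact that the pushforward of the uniform measure on $\SS^{d-1}$ under $w\mapsto \langle x,w\rangle$ is proportional to $(1-t^2)^{(d-3)/2}\,dt$.
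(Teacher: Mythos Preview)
Your argument is correct. One small mislabel: the addition formula you invoke is \cref{lem:additive-formula} in the paper, not \cref{lem:decompose-Gegenbauer}; the latter is the reproducing-kernel statement $f_\ell(\sigma)=\alpha_{\ell,d}\,\E_w[f(w)P_d^\ell(\langle\sigma,w\rangle)]$.

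Your route differs slightly from the paper's. You expand $P_d^\ell(\langle x,w\rangle)$ and $P_d^\ell(\langle y,w\rangle)$ via the addition theorem and collapse the resulting double sum using orthonormality of the basis $\{Y_{\ell,j}\}$. The paper instead observes directly that $w\mapsto P_d^\ell(\langle x,w\rangle)$ lies in $\Hc_\ell(\SS^{d-1})$ (so its degree-$\ell$ component is itself) and applies \cref{lem:decompose-Gegenbauer} with $f=P_d^\ell(\langle x,\cdot\rangle)$, which yields the first identity in one line; the second identity then follows from $\Hc_\ell\perp\Hc_{\ell'}$. Both arguments rest on the same circle of facts (addition theorem, reproducing kernel, orthogonality of the $\Hc_\ell$), so the difference is mainly stylistic: the paper's version is more abstract and avoids introducing a basis, while yours is an explicit computation that makes the constants transparent.
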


The proof, like most proofs, is deferred to the appendix. 
The following very useful fact (a.k.a. Addition Theorem) connects Gegenbauer polynomials and spherical harmonics.
\begin{restatable}[Addition Theorem]{theorem}{additiveformula}\label{lem:additive-formula}
For every integer $\ell \ge 0$, if $\left\{ y^\ell_1, y^\ell_2, \ldots, y^\ell_{\alpha_{\ell,d}} \right\}$ is an orthonormal basis for $\Hc_{\ell}\left( \SS^{d-1} \right)$, then for any $\sigma, w \in \SS^{d-1}$ we have
\[ \frac{\alpha_{\ell,d}}{|\SS^{d-1}|} \cdot P_{d}^{\ell}\left( \langle \sigma , w \rangle \right) = \sum_{j=1}^{\alpha_{\ell,d}} y^\ell_j(\sigma) \cdot y^\ell_j(w). \]
\end{restatable}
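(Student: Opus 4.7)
The plan is to follow the classical rotation-invariance argument for reproducing kernels on the sphere. Define $K_\ell(\sigma,w) := \sum_{j=1}^{\alpha_{\ell,d}} y_j^\ell(\sigma)\, y_j^\ell(w)$. The first observation is that this kernel does not depend on the choice of orthonormal basis: if $\{\tilde y_j^\ell\}$ is another orthonormal basis, then $\tilde y_j^\ell = \sum_k O_{jk} y_k^\ell$ for some orthogonal matrix $O$, and a direct computation shows $\sum_j \tilde y_j^\ell(\sigma)\tilde y_j^\ell(w) = \sum_k y_k^\ell(\sigma) y_k^\ell(w)$ because $O^\top O = I$.

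Next I would exploit rotational invariance. Since the Laplacian commutes with orthogonal transformations, for any $R \in O(d)$ the family $\{y_j^\ell \circ R\}_{j}$ is again an orthonormal basis of $\Hc_\ell(\SS^{d-1})$. By the basis-independence above,
\[
K_\ell(R\sigma, Rw) \;=\; \sum_j y_j^\ell(R\sigma)\, y_j^\ell(Rw) \;=\; K_\ell(\sigma, w).
\]
Fix $w\in\SS^{d-1}$ and let $\sigma$ vary. The map $\sigma\mapsto K_\ell(\sigma,w)$ lies in $\Hc_\ell(\SS^{d-1})$ because it is a linear combination of the $y_j^\ell(\sigma)$. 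Moreover, by the rotational invariance just established, $K_\ell(\sigma,w)$ is unchanged under any rotation fixing $w$, so it depends on $\sigma$ only through $\langle \sigma, w\rangle$. Therefore there exists a univariate function $\phi_\ell$ with $K_\ell(\sigma,w) = \phi_\ell(\langle \sigma,w\rangle)$.

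The main structural step is to identify $\phi_\ell$ with the Gegenbauer polynomial. I would argue that the subspace of degree-$\ell$ spherical harmonics that depend only on $\langle \cdot, w\rangle$ (the zonal harmonics at $w$) is one-dimensional: writing such a harmonic as $p(\langle x,w\rangle)$ and applying the Laplacian on $\RR^d$ together with the homogeneity constraint produces the Gegenbauer ODE, whose polynomial solution of degree $\ell$ is unique up to scaling and equals $P_d^\ell$. Consequently $\phi_\ell(t) = c_\ell\, P_d^\ell(t)$ for some constant $c_\ell$.

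Finally I would pin down $c_\ell$ by a trace-type computation. Setting $\sigma = w$ and integrating over $w\in\SS^{d-1}$ yields
\[
\int_{\SS^{d-1}} K_\ell(w,w)\,dw \;=\; \sum_{j=1}^{\alpha_{\ell,d}} \|y_j^\ell\|_{\SS^{d-1}}^2 \;=\; \alpha_{\ell,d},
\]
while on the other side it equals $c_\ell\, P_d^\ell(1)\,|\SS^{d-1}|$. From the definition $P_d^\ell(t) = \sum_{j} c_j t^{\ell-2j}(1-t^2)^j$ one has $P_d^\ell(1) = c_0 = 1$, so $c_\ell = \alpha_{\ell,d}/|\SS^{d-1}|$, which is exactly the constant appearing in the statement. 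The step I expect to be the main obstacle is the uniqueness of the zonal harmonic up to scaling, i.e., verifying that the Gegenbauer polynomial really is the only candidate for $\phi_\ell$; this is where one must bring in the explicit structure of harmonic polynomials, either via the Gegenbauer ODE or by invoking the orthogonality relation \eqref{Gegenbauer-Poly-inner-prod} together with the reproducing property stated in \cref{claim-gegen-kernel-properties} to cross-check the normalization.
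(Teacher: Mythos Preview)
Your argument is correct and is in fact the standard rotation-invariance proof of the addition theorem. The paper does not supply its own proof: it simply cites \cite[Theorem~2.9]{atkinson2012spherical} and remarks that the result ``can be proven analytically, using the properties of the Poisson kernel in the unit ball.'' That Poisson-kernel route expands the kernel $\frac{1-r^2}{|x-rw|^d}$ on $\SS^{d-1}$ and reads off the zonal pieces from its spherical harmonic decomposition, whereas your approach works intrinsically on the sphere: basis-independence plus $O(d)$-invariance forces $K_\ell$ to be zonal, one-dimensionality of the zonal subspace pins it to $P_d^\ell$, and the trace fixes the constant. Both are textbook; yours is arguably more self-contained here since it avoids introducing the Poisson kernel machinery and instead leverages exactly the Gegenbauer structure the paper has already set up. The one step you flag as the main obstacle---uniqueness of the degree-$\ell$ zonal harmonic---is indeed the crux, and your suggested route via the Gegenbauer ODE (or equivalently via the Funk--Hecke formula) is the standard way to close it.
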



\section{Reconstruction of $L^2\left(\SS^{d-1}\right)$ Functions via Spherical Harmonics}

In this section we show how to reconstruct any function $f \in L^2\left(\SS^{d-1}\right)$ from optimal number of samples via the spherical harmonics. 
We begin with showing that the  spherical harmonics form a complete set of orthonormal functions and thus form an orthonormal basis of the Hilbert space of square-integrable functions on the surface of the sphere $\SS^{d-1}$. This is analogous to periodic functions, viewed as functions defined on a circle, which can be expressed as a linear combination of circular functions (sines and cosines) via the Fourier series.

\begin{restatable}[Direct Sum Decomposition of $L^2(\SS^{d-1})$]{lemma}{directsumdecompose}\label{lem:spherical-harmonic-direct-sum-decompose}
The family of spaces $\Hc_{\ell}\left( \SS^{d-1} \right)$ yields a Hilbert space direct sum decomposition
$L^2\left( \SS^{d-1} \right) = \bigoplus_{\ell=0}^{\infty} \Hc_{\ell}\left( \SS^{d-1} \right)$: the summands are closed and pairwise orthogonal, and that every $f \in L^2\left( \SS^{d-1} \right)$ is the sum of a converging series (in the sense of mean-square convergence with the $L^2$-norm defined in \cref{eq:inner-prod-def}),
\[f = \sum_{\ell=0}^\infty f_\ell,\]
where $f_\ell \in \Hc_{\ell}\left( \SS^{d-1} \right)$ are uniquely determined functions. Furthermore, given any orthonormal basis $\left\{ y^\ell_1, y^\ell_2, \ldots, y^\ell_{\alpha_{\ell,d}} \right\}$ of $\Hc_{\ell}\left( \SS^{d-1} \right)$ we have
$f_\ell = \sum_{j=1}^{\alpha_{\ell,d}} \langle f, y^\ell_j \rangle_{\SS^{d-1}} \cdot y^\ell_j$.

\end{restatable}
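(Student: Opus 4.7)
The plan is to verify three properties: (i) pairwise orthogonality of the subspaces $\Hc_\ell(\SS^{d-1})$, (ii) closedness of each summand, and (iii) density of their algebraic sum in $L^2(\SS^{d-1})$; the stated expansion and the coefficient formula then follow from standard Hilbert space theory.

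First I would establish orthogonality. Given $y^\ell \in \Hc_\ell(\SS^{d-1})$ and $y^{\ell'} \in \Hc_{\ell'}(\SS^{d-1})$ with $\ell \neq \ell'$, I want $\langle y^\ell, y^{\ell'} \rangle_{\SS^{d-1}} = 0$. Using the Addition Theorem (\cref{lem:additive-formula}) together with the second part of \cref{claim-gegen-kernel-properties}, one can expand both harmonics in terms of zonal kernels $P_d^\ell(\langle \cdot, w \rangle)$ and $P_d^{\ell'}(\langle \cdot, w \rangle)$, and the cross term vanishes in expectation over $w\sim\mathcal{U}(\SS^{d-1})$. (An equivalent route is to note that spherical harmonics are eigenfunctions of the Laplace--Beltrami operator with distinct eigenvalues $-\ell(\ell+d-2)$, whose self-adjointness forces orthogonality; but the zonal-kernel route stays inside the language already developed in the excerpt.) Closedness of each $\Hc_\ell(\SS^{d-1})$ is immediate because it is finite-dimensional of dimension $\alpha_{\ell,d}$.

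The main obstacle is density of $\bigoplus_{\ell=0}^{\infty} \Hc_\ell(\SS^{d-1})$ in $L^2(\SS^{d-1})$. I would proceed in two steps. \emph{Step A:} By Stone--Weierstrass, polynomials restricted to $\SS^{d-1}$ are uniformly dense in $C(\SS^{d-1})$ (they separate points and contain the constants); continuous functions are in turn dense in $L^2(\SS^{d-1})$ since $\SS^{d-1}$ has finite measure, so it suffices to approximate polynomials. \emph{Step B:} Every polynomial restricted to the sphere lies in $\bigoplus_{\ell=0}^{n} \Hc_\ell(\SS^{d-1})$ for some finite $n$. This follows from the classical harmonic decomposition of homogeneous polynomials: every $P \in \mathcal{P}_n(d)$ admits a unique expansion
\[
P(x) = H_n(x) + \|x\|^2\, H_{n-2}(x) + \|x\|^4\, H_{n-4}(x) + \cdots,
\]
where $H_k \in \Hc_k(d)$. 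Restricting to $\|x\|=1$ gives $P|_{\SS^{d-1}} \in \bigoplus_{k}\Hc_k(\SS^{d-1})$. The harmonic decomposition itself I would prove by a dimension count showing $\mathcal{P}_n(d) = \Hc_n(d) \oplus \|x\|^2 \mathcal{P}_{n-2}(d)$ (using that the multiplication map $Q \mapsto \|x\|^2 Q$ is injective and that $\Hc_n(d) = (\|x\|^2 \mathcal{P}_{n-2}(d))^{\perp}$ in the apolar/Fischer inner product on $\mathcal{P}_n(d)$), then iterating.

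Having orthogonality, closedness, and density, the Hilbert space projection theorem yields $L^2(\SS^{d-1}) = \bigoplus_{\ell=0}^\infty \Hc_\ell(\SS^{d-1})$ as a Hilbert space direct sum and the mean-square convergent expansion $f = \sum_{\ell=0}^\infty f_\ell$ with $f_\ell$ the orthogonal projection of $f$ onto $\Hc_\ell(\SS^{d-1})$. Writing this projection in the given orthonormal basis immediately produces the stated formula $f_\ell = \sum_{j=1}^{\alpha_{\ell,d}} \langle f, y^\ell_j\rangle_{\SS^{d-1}}\, y^\ell_j$, and uniqueness of $f_\ell$ is the uniqueness of orthogonal projection onto a closed subspace. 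The technically delicate step is Step B (the harmonic decomposition via the Fischer/apolar inner product); the rest is essentially an application of Stone--Weierstrass plus general Hilbert space facts.
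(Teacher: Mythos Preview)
The paper does not actually prove this lemma: its ``proof'' is a one-line citation to a textbook (Lang), calling it a standard result. Your proposal, by contrast, is a correct self-contained sketch following the classical route---pairwise orthogonality, closedness by finite-dimensionality, and density via Stone--Weierstrass together with the harmonic decomposition $\mathcal{P}_n(d) = \Hc_n(d) \oplus \|x\|^2 \mathcal{P}_{n-2}(d)$---and is substantially more informative than what the paper offers.

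One minor caution on your orthogonality step: routing it through \cref{claim-gegen-kernel-properties} risks circularity in this paper's logical order, since that lemma is proved via \cref{lem:decompose-Gegenbauer}, whose textbook proof may itself presuppose the direct sum decomposition you are trying to establish. Your alternative Laplace--Beltrami eigenfunction argument (distinct eigenvalues $-\ell(\ell+d-2)$ of a self-adjoint operator force orthogonality) is independent of the decomposition and is the cleaner choice here. With that substitution the argument is complete.
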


The series expansion in \cref{lem:spherical-harmonic-direct-sum-decompose} is the analog of the Fourier expansion of periodic functions, and is known as $f$'s ``\emph{generalized Fourier series}''~\cite{pennell1930generalized} with respect to the Hilbert basis $\left\{ y^\ell_j : j \in [\alpha_{\ell,d}], \ell \ge 0 \right\}$.
We remark that it is in general intractable to compute an orthogonal basis for the space of spherical harmonics~\cite{minh2006mercer}, which renders the generalized Fourier series expansion in \cref{lem:spherical-harmonic-direct-sum-decompose} primarily existential. While finding the generalized Fourier expansion of a function $f \in L^2\left( \SS^{d-1} \right)$ is computationally intractable, our goal is to answer the next fundamental question, which is about finding the projection of a function $f$ onto the space of spherical harmonics, i.e., the $f_\ell$'s in \cref{lem:spherical-harmonic-direct-sum-decompose}. Concretely,
we seek to solve the following problem.

\begin{problm}\label{interpolation-problem}
For an integer $q\ge 0$ and a given input function $f \in L^2\left( \SS^{d-1} \right)$ whose decomposition over the Hilbert sum $\bigoplus_{\ell=0}^{\infty} \Hc_{\ell}\left( \SS^{d-1} \right)$ is $f = \sum_{\ell=0}^\infty f_\ell$ as per \cref{lem:spherical-harmonic-direct-sum-decompose},
let us define the low-degree expansion of this function as $f^{(q)} := \sum_{\ell=0}^q f_\ell$.
How efficiently can we learn $f^{(q)} \in \bigoplus_{\ell=0}^{q}\Hc_{\ell}\left( \SS^{d-1} \right)$? More precisely, we want to find a set $\{ w_1, w_2, \ldots, w_s \} \subseteq \SS^{d-1}$ with minimal cardinality $s$ along with an efficient algorithm that given samples $\{f(w_i)\}_{i=1}^s$ can interpolate $f(\cdot)$ with a function $\tilde{f}^{(q)} \in \bigoplus_{\ell=0}^{q} \Hc_{\ell}\left( \SS^{d-1} \right)$ such that: 
\[ \left\| \tilde{f}^{(q)} - f^{(q)} \right\|_{\SS^{d-1}}^2 \le \epsilon \cdot \left\| f^{(q)} - f \right\|_{\SS^{d-1}}^2.\] 
\end{problm}
To see why learning the low-degree expansion of a function $f$ in \cref{interpolation-problem} makes sense, note that the \emph{angular power spectrum} of $f$ commonly obeys a power law decay of the form $\left\| f_\ell \right\|_{\SS^{d-1}}^2 \le \bigo (\ell^{-s})$, for some $s > 0$, depending on the order of differentiability of $f$. 
In particular, the Sobolev inequalities imply that for any infinitely differentiable $f$, $\left\| f_\ell \right\|_{\SS^{d-1}}^2$ decays faster than any rational function of $\ell$ as $\ell \to \infty$. 
Thus, $f^{(q)}$ should well approximate $f$ for even modest $q$, and learning the low-degree expansion $f^{(q)}$ is extremely useful for a wide range of differentiable functions.

For ease of notation, we denote the Hilbert space of spherical harmonics of degree at most $q$ by $\Hc^{(q)}\left( \SS^{d-1} \right) := \bigoplus_{\ell=0}^{q}\Hc_{\ell}\left( \SS^{d-1} \right)$.
To answer \cref{interpolation-problem} we exploit the close connection between the spherical harmonics and Gengenbauer polynomials, and in particular the fact that the zonal harmonics are the reproducing kernels of the Hilbert spaces $\Hc_{\ell}\left( \SS^{d-1} \right)$.
\begin{restatable}[Reproducing Kernel of $\Hc_{\ell}\left( \SS^{d-1} \right)$]{lemma}{projectionltofunctionsintohilberpspacesphericharmonic}\label{lem:decompose-Gegenbauer}
For every $f \in L^2\left( \SS^{d-1} \right)$, if $f = \sum_{\ell=0}^\infty f_\ell$ is the unique decomposition of $f$ over $\bigoplus_{\ell=0}^{\infty} \Hc_{\ell}\left( \SS^{d-1} \right)$ as per \cref{lem:spherical-harmonic-direct-sum-decompose}, then $f_\ell$ is given by
\[ f_\ell(\sigma) = \alpha_{\ell,d} \cdot \E_{w \sim \mathcal{U} (\SS^{d-1}) } \left[ f(w) P_{d}^{\ell}\left( \langle \sigma , w \rangle \right) \right] ~~~\text{ for }\sigma \in \SS^{d-1}. \]
\end{restatable}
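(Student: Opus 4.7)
The plan is to combine the two results already stated, namely the explicit basis expansion of $f_\ell$ from \cref{lem:spherical-harmonic-direct-sum-decompose} with the Addition Theorem (\cref{lem:additive-formula}), which converts a sum over an orthonormal basis of $\Hc_\ell(\SS^{d-1})$ into a single evaluation of the Gegenbauer polynomial. No new estimates are required; the work is really just bookkeeping of normalizations.

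Concretely, I would fix any orthonormal basis $\{y_1^\ell, \ldots, y_{\alpha_{\ell,d}}^\ell\}$ of $\Hc_\ell(\SS^{d-1})$. By \cref{lem:spherical-harmonic-direct-sum-decompose}, the projection $f_\ell$ is given pointwise by
\[
    f_\ell(\sigma) \;=\; \sum_{j=1}^{\alpha_{\ell,d}} \langle f, y_j^\ell \rangle_{\SS^{d-1}} \cdot y_j^\ell(\sigma).
\]
Next I would expand each inner product via \cref{eq:inner-prod-def} as an integral, and rewrite it as $|\SS^{d-1}|$ times an expectation over the uniform measure. Since the outer sum over $j$ has only finitely many terms (it ranges over the $\alpha_{\ell,d}$-dimensional basis), swapping sum and integral is immediate with no convergence issues, yielding
\[
    f_\ell(\sigma) \;=\; |\SS^{d-1}| \cdot \E_{w \sim \mathcal{U}(\SS^{d-1})}\!\left[ f(w) \sum_{j=1}^{\alpha_{\ell,d}} y_j^\ell(\sigma) \cdot y_j^\ell(w) \right].
\]

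At this point I would apply the Addition Theorem to replace the bracketed sum by $\frac{\alpha_{\ell,d}}{|\SS^{d-1}|} P_d^\ell(\langle \sigma, w\rangle)$; the $|\SS^{d-1}|$ factors cancel, giving exactly the claimed formula
\[
    f_\ell(\sigma) \;=\; \alpha_{\ell,d} \cdot \E_{w \sim \mathcal{U}(\SS^{d-1})}\!\left[ f(w) \, P_d^\ell(\langle \sigma, w\rangle) \right].
\]
Note that this identity is manifestly basis-independent, as it must be, even though the intermediate steps pick a basis.

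There is no real obstacle here; the only subtlety worth flagging is that the claim is a pointwise statement, so one should verify that the right-hand side is well-defined for every $\sigma \in \SS^{d-1}$, which follows from $P_d^\ell(\langle \sigma, \cdot \rangle) \in L^2(\SS^{d-1})$ (indeed it is a bounded polynomial on the sphere) together with $f \in L^2(\SS^{d-1})$, so Cauchy--Schwarz guarantees the expectation exists. Beyond that, the proof is a one-line substitution of the Addition Theorem into the basis expansion.
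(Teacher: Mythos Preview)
Your proposal is correct and is the standard argument. The paper itself does not give a proof here at all---it simply cites a textbook (\cite{morimoto1998analytic})---but it performs exactly your computation, in the reverse direction and summed over $\ell$, in \cref{appendix-reduce-interpolation-approximate-LS} when verifying that $\Kc_d^{(q)} f = f^{(q)}$. So your approach matches the paper's implicit reasoning.
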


Now we define a kernel operator, based on the low-degree Gegenbauer polynomials, which projects functions onto their low-degree spherical harmonic expansion.
\begin{defn}[Projection Operator onto $\Hc^{(q)}(\SS^{d-1})$]\label{eq:kernel-operator-def}
For any integers $q \ge 0$ and $d\ge 2$, define the kernel operator $\Kc_{d}^{(q)} : L^2\left( \SS^{d-1} \right) \to L^2\left( \SS^{d-1} \right)$ as follows: for $f \in L^2\left( \SS^{d-1} \right)$ and $\sigma 
\in \SS^{d-1}$,
\begin{equation}
    \left[\Kc^{(q)}_{d} f \right](\sigma) := \sum_{\ell =0}^q \frac{\alpha_{\ell,d}}{|\SS^{d-1}|} \left< f, P_{d}^{\ell}\left( \langle \sigma , \cdot \rangle \right) \right>_{\SS^{d-1}} = \sum_{\ell =0}^q \alpha_{\ell,d} \cdot \E_{w \sim \mathcal{U} (\SS^{d-1}) } \left[ f(w) P_{d}^{\ell}\left( \langle \sigma , w \rangle \right) \right].
\end{equation}
This is an integral operator with kernel function $k_{q,d}(\sigma,w) := \sum_{\ell =0}^q \frac{\alpha_{\ell,d}}{|\SS^{d-1}|} \cdot P_{d}^{\ell}\left( \langle \sigma , w \rangle \right)$.
\end{defn}
Now note that the operator $\Kc^{(q)}_{d}$ is self-adjoint and positive semi-definite.
Moreover, using the reproducing property of this kernel we can establish that $\Kc^{(q)}_{d}$ is a projection operator.
\begin{restatable}{claim}{kerneloperatorisprojectionoperator}\label{claim:k-square-eqaul-k} The operator $\Kc^{(q)}_{d}$ defined in \cref{eq:kernel-operator-def} satisfies the property $\left(\Kc^{(q)}_{d}\right)^2 = \Kc^{(q)}_{d}$.
\end{restatable}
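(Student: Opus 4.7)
The plan is to show that the operator $(\Kc_d^{(q)})^2$ is itself an integral operator whose kernel coincides pointwise with $k_{q,d}$, which immediately forces the two operators to agree on all of $L^2(\SS^{d-1})$. Since the kernel function $k_{q,d}$ is a finite sum of bounded functions on the compact set $\SS^{d-1} \times \SS^{d-1}$, everything in sight is bounded and Fubini's theorem applies without issue.

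First, I would write out the composition. For any $f \in L^2(\SS^{d-1})$ and $\sigma \in \SS^{d-1}$,
\[
\left[\left(\Kc^{(q)}_{d}\right)^2 f\right](\sigma) = \int_{\SS^{d-1}} k_{q,d}(\sigma, u) \left[\Kc^{(q)}_{d} f\right](u) \, du = \int_{\SS^{d-1}} f(w) \, K(\sigma, w) \, dw,
\]
where by Fubini the effective kernel is
\[
K(\sigma, w) = \int_{\SS^{d-1}} k_{q,d}(\sigma, u) \, k_{q,d}(u, w) \, du.
\]
So the goal reduces to proving the pointwise identity $K(\sigma, w) = k_{q,d}(\sigma, w)$ for all $\sigma, w \in \SS^{d-1}$.

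Second, I would expand each copy of $k_{q,d}$ using its definition as a finite sum of Gegenbauer polynomials and pull the two sums outside the integral:
\[
K(\sigma, w) = \sum_{\ell=0}^q \sum_{\ell'=0}^q \frac{\alpha_{\ell,d} \, \alpha_{\ell',d}}{|\SS^{d-1}|^2} \int_{\SS^{d-1}} P_{d}^{\ell}(\langle \sigma, u\rangle) \, P_{d}^{\ell'}(\langle w, u\rangle) \, du.
\]
Rewriting the integral as $|\SS^{d-1}| \cdot \E_{u \sim \Ucal(\SS^{d-1})}[P_{d}^{\ell}(\langle \sigma, u\rangle) P_{d}^{\ell'}(\langle w, u\rangle)]$ sets up a direct application of the reproducing property from \cref{claim-gegen-kernel-properties}.

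Third, I would invoke that reproducing property: the off-diagonal terms $\ell \neq \ell'$ vanish, and each diagonal term satisfies $\E_{u}[P_d^\ell(\langle \sigma, u\rangle) P_d^\ell(\langle w, u\rangle)] = \alpha_{\ell,d}^{-1} P_d^\ell(\langle \sigma, w\rangle)$. Substituting and simplifying, the factor $\alpha_{\ell,d}^{-1}$ cancels one power of $\alpha_{\ell,d}$ and one power of $|\SS^{d-1}|^{-1}$ cancels with the factor brought out by converting the integral to an expectation, leaving
\[
K(\sigma, w) = \sum_{\ell=0}^q \frac{\alpha_{\ell,d}}{|\SS^{d-1}|} P_{d}^{\ell}(\langle \sigma, w\rangle) = k_{q,d}(\sigma, w),
\]
as desired.

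There is no real obstacle here beyond correctly tracking the constants; the heavy lifting is done by the reproducing identity of zonal harmonics, which plays exactly the role of orthonormality and completeness on the level of Gegenbauer polynomials. One minor subtlety worth noting explicitly is that the second expectation in \cref{claim-gegen-kernel-properties} is stated with both $P_d^\ell(\langle x, w\rangle)$ and $P_d^{\ell'}(\langle y, w\rangle)$ integrated against the same sphere variable $w$, which matches our integration variable $u$ exactly, so the lemma applies verbatim with $x = \sigma$ and $y = w$.
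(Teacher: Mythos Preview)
Your proposal is correct and follows essentially the same approach as the paper: both expand the composition as a double sum over $\ell,\ell'$, apply Fubini, and collapse the sum using the reproducing property of zonal harmonics in \cref{claim-gegen-kernel-properties}. The only cosmetic difference is that you first isolate the composed kernel $K(\sigma,w)$ and show it equals $k_{q,d}(\sigma,w)$, whereas the paper carries a test function $f$ through the computation and shows $\left[(\Kc_d^{(q)})^2 f\right](\sigma)=\left[\Kc_d^{(q)} f\right](\sigma)$ directly.
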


Furthermore, by the addition theorem (\cref{lem:additive-formula}), the operator $\Kc^{(q)}_{d}$ is trace-class (i.e., the trace is finite and independent of the choice of basis) because
\begin{align}
    \trace\left( \Kc^{(q)}_{d} \right) &= \int_{\SS^{d-1}} k_{q,d}(w,w) \, dw\nonumber\\ 
    &= \sum_{\ell=0}^q \frac{\alpha_{\ell,d}}{|\SS^{d-1}|} \cdot \int_{\SS^{d-1}} P_{d}^{\ell}\left( \langle w , w \rangle \right)  \, dw\nonumber\\
    &= \sum_{\ell=0}^q \alpha_{\ell,d} = {d+q-1 \choose q} + {d+q-2 \choose q-1} - 1. \label{eq:operator-K-trace-class}
\end{align}

By combining \cref{lem:additive-formula,lem:spherical-harmonic-direct-sum-decompose}, and using the definition of the projection operator $\Kc^{(q)}_d$, it follows that for any function $f \in L^2\left( \SS^{d-1} \right)$ with Hilbert sum decomposition $f = \sum_{\ell=0}^\infty f_\ell$, the low-degree component $f^{(q)} = \sum_{\ell=0}^q f_\ell \in\Hc^{(q)}\left( \SS^{d-1} \right)$ can be computed as $f^{(q)} = \Kc^{(q)}_{d} f$.
Equivalently, in order to learn $f^{(q)}$, it suffices to solve the following least-squares regression problem,
\begin{equation}\label{eq:regression-problem}
    \min_{g \in L^2\left( \SS^{d-1} \right)} \left\| \Kc^{(q)}_{d} g - f \right\|_{\SS^{d-1}}^2.
\end{equation}
If $g^*$ is an optimal solution to the above regression problem then
$f^{(q)} = \Kc^{(q)}_{d} g^*$.
In the next claim we show that solving the least squares problem in \cref{eq:regression-problem}, even to a coarse approximation, is sufficient to solve our interpolation problem (i.e., \cref{interpolation-problem}):
\begin{restatable}{claim}{approximateregressionsolvesinterpolation}\label{claim-interpolation-via-least-squares}
For any function $f \in L^2\left( \SS^{d-1} \right)$, any integer $q \ge 0$, and any $C\ge1$, if $\tilde{g} \in L^2\left( \SS^{d-1} \right)$ is a function that satisfies,
\[ \left\| \Kc^{(q)}_{d} \tilde{g} - f \right\|_{\SS^{d-1}}^2 \le C \cdot \min_{g \in L^2\left( \SS^{d-1} \right)} \left\| \Kc^{(q)}_{d} g - f \right\|_{\SS^{d-1}}^2, \]
and if we let $f^{(q)} \coloneqq \Kc^{(q)}_{d} f$, where $\Kc^{(q)}_{d}$ is defined as per \cref{eq:kernel-operator-def}, then the following holds
\[ \left\| \Kc^{(q)}_{d} \tilde{g} - f^{(q)} \right\|_{\SS^{d-1}}^2 \le (C-1) \cdot \left\| f^{(q)} - f \right\|_{\SS^{d-1}}^2. \]
\end{restatable}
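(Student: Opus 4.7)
The plan is to observe that $\Kc^{(q)}_d$ is an orthogonal projection onto the closed subspace $\Hc^{(q)}(\SS^{d-1})$, so the claim will reduce to a one-line Pythagorean identity once the geometric picture is set up. First I will note that $f^{(q)} = \Kc^{(q)}_d f$ lies in $\Hc^{(q)}(\SS^{d-1})$ (by \cref{lem:decompose-Gegenbauer} combined with \cref{lem:spherical-harmonic-direct-sum-decompose}), that $\Kc^{(q)}_d \tilde g$ also lies in $\Hc^{(q)}(\SS^{d-1})$ by construction, and that $f - f^{(q)} = \sum_{\ell > q} f_\ell$ lies in the orthogonal complement $\Hc^{(q)}(\SS^{d-1})^\perp$ by the direct-sum decomposition of \cref{lem:spherical-harmonic-direct-sum-decompose}.

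Next I would compute the value of the minimum on the right-hand side of the hypothesis. Because $\Kc^{(q)}_d$ maps $L^2(\SS^{d-1})$ onto $\Hc^{(q)}(\SS^{d-1})$ (surjectively, since $\Kc^{(q)}_d f^{(q)} = f^{(q)}$ using \cref{claim:k-square-eqaul-k}), the optimization is just projecting $f$ onto $\Hc^{(q)}(\SS^{d-1})$, and the optimum is attained at $g = f$ with value
\[ \min_{g \in L^2(\SS^{d-1})} \left\| \Kc^{(q)}_d g - f \right\|_{\SS^{d-1}}^2 = \left\| f^{(q)} - f \right\|_{\SS^{d-1}}^2. \]

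Then I would split the residual of $\tilde g$ as
\[ \Kc^{(q)}_d \tilde g - f = \underbrace{\bigl( \Kc^{(q)}_d \tilde g - f^{(q)} \bigr)}_{\in \Hc^{(q)}(\SS^{d-1})} + \underbrace{\bigl( f^{(q)} - f \bigr)}_{\in \Hc^{(q)}(\SS^{d-1})^\perp}, \]
and invoke orthogonality to conclude
\[ \left\| \Kc^{(q)}_d \tilde g - f \right\|_{\SS^{d-1}}^2 = \left\| \Kc^{(q)}_d \tilde g - f^{(q)} \right\|_{\SS^{d-1}}^2 + \left\| f^{(q)} - f \right\|_{\SS^{d-1}}^2. \]
Substituting this identity and the value of the minimum into the hypothesis yields the desired bound after rearranging. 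The conceptual obstacle is really just confirming that $\Kc^{(q)}_d$ is the \emph{orthogonal} projection onto $\Hc^{(q)}(\SS^{d-1})$; this is where \cref{claim:k-square-eqaul-k} (idempotence) and self-adjointness of $\Kc^{(q)}_d$ (inherited from the symmetry of the kernel $k_{q,d}$) come in, and is the only nontrivial ingredient in the argument.
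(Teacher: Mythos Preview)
Your proposal is correct and essentially identical to the paper's proof: both compute the minimum as $\|f^{(q)}-f\|_{\SS^{d-1}}^2$, then apply the Pythagorean theorem to the orthogonal decomposition $\Kc^{(q)}_d\tilde g - f = (\Kc^{(q)}_d\tilde g - f^{(q)}) + (f^{(q)} - f)$ and rearrange. The only cosmetic difference is that the paper writes the first summand as $\Kc^{(q)}_d(\tilde g - f)$ before identifying it with $\Kc^{(q)}_d\tilde g - f^{(q)}$.
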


\cref{claim-interpolation-via-least-squares} shows that solving the regression problem in \cref{eq:regression-problem} approximately provides a solution to our spherical harmonics interpolation problem (\cref{interpolation-problem}). But how can we solve this least-squares problem efficiently? 
Not only does the problem involve a possibly infinite dimensional parameter vector $g$,
but the objective function also involves the continuous domain on the surface of $\SS^{d-1}$.

\subsection{Randomized Discretization via Leverage Function Sampling} \label{sec:discretization}
We solve the continuous regression in \cref{eq:regression-problem} by randomly discretizing the sphere $\SS^{d-1}$, thereby reducing our problem to a regression on a finite set of points $w_1,w_2, \ldots, w_s \in \SS^{d-1}$. 
In particular, we propose to sample points on $\SS^{d-1}$ with probability proportional to the so-called \emph{leverage function}, a specific distribution that has been widely applied in randomized algorithms for linear algebra problems on discrete matrices~\cite{li2013iterative}. We start with the definition of the leverage function:

\begin{defn}[Leverage Function]\label{def:leverage-function}
For integers $q \ge 0$ and $d>0$, we define the leverage function of the operator $\Kc^{(q)}_{d}$  (see \cref{eq:kernel-operator-def}) for every $w \in \SS^{d-1}$ as follows,
\begin{align}
    \tau_{q}(w) := \max_{{g \in L^2\left( \SS^{d-1} \right) } }  \left\| \Kc^{(q)}_{d} g \right\|_{\SS^{d-1}}^{-2} \cdot {\left| \left[ \Kc^{(q)}_{d} g\right](w) \right|^2} 
\end{align}
\end{defn}
Intuitively, $\tau_q(w)$ is an upper bound of how much a function that is spanned by the eigenfunctions of the operator $\Kc^{(q)}_{d}$ can
``blow up'' at $w$.
The larger the leverage function $\tau_q(w)$ implies the higher the probability we will be required to  sample $w$. 
This ensures that our sample points well reflect any possibly significant components, or ``spikes'', of the function.
Ultimately, the integral $\int_{\SS^{d-1}}\tau_q(w)\, dw$ determines how many samples
we require to solve the regression problem \cref{eq:regression-problem} to a given accuracy. It is an already known fact that the leverage function integrates to the rank of the operator $\Kc^{(q)}_{d}$ (which turns out to be equal to the dimensionality of the Hilbert space $\Hc^{(q)}(\SS^{d-1})$). 
This will ultimately
allow us to achieve a $\widetilde{\bigo}(\sum_{\ell=0}^q \alpha_{\ell,d})$ sample complexity bound for solving the interpolation \cref{interpolation-problem}. 
To express the leverage function as a closed form, we make use of the following lemma that gives a useful alternative characterization of the leverage function.
\begin{restatable}[Min Characterization of the Leverage Function]{lemma}{mincharacterleveragefunction}\label{lem:min-char-leverage-function}
For any $w \in \SS^{d-1}$, let $\tau_q(w)$ be the leverage function (\cref{def:leverage-function}) and define $\phi_w \in L^2(\SS^{d-1})$ by $\phi_w(\sigma) \equiv \sum_{\ell=0}^q \frac{\alpha_{\ell,d}}{|\SS^{d-1}|}  P_{d}^{\ell}\left( \langle \sigma , w \rangle \right)$. We have the following minimization characterization of the leverage function:
\begin{equation}\label{eq:lev-score-min-char}
    \tau_q(w) = \left\{\min_{g \in L^2(\SS^{d-1})} \|g\|_{\SS^{d-1}}^2, ~~~~\text{s.t.}~~~ \Kc^{(q)}_{d} g = \phi_w \right\}.
\end{equation}
\end{restatable}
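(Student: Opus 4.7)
The plan is to show that both the max in Definition \ref{def:leverage-function} and the min in \eqref{eq:lev-score-min-char} evaluate to $\|\phi_w\|_{\SS^{d-1}}^2$, by exploiting the facts that $\Kc^{(q)}_d$ is the orthogonal projection of $L^2(\SS^{d-1})$ onto $\Hc^{(q)}(\SS^{d-1})$ and that $\phi_w$ serves as the reproducing kernel of this subspace at the point $w$.

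First I would record two identities that fall straight out of the definitions. From Definition \ref{eq:kernel-operator-def} together with the formula for $\phi_w$, swapping the finite sum with the expectation gives the reproducing identity $[\Kc^{(q)}_d g](w) = \langle g, \phi_w\rangle_{\SS^{d-1}}$ for every $g \in L^2(\SS^{d-1})$. Moreover, $\phi_w$ lies in $\Hc^{(q)}(\SS^{d-1})$, since each summand $P_d^\ell(\langle \cdot, w\rangle)$ is a zonal harmonic of degree $\ell$ by the Addition Theorem (Theorem \ref{lem:additive-formula}). Combining this with Lemma \ref{lem:decompose-Gegenbauer} shows that $\Kc^{(q)}_d$ acts as the identity on $\Hc^{(q)}(\SS^{d-1})$; together with Claim \ref{claim:k-square-eqaul-k} and the symmetry of the kernel $k_{q,d}$ (hence self-adjointness of $\Kc^{(q)}_d$), this identifies $\Kc^{(q)}_d$ as the orthogonal projection onto $\Hc^{(q)}(\SS^{d-1})$. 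In particular $\Kc^{(q)}_d \phi_w = \phi_w$, so $g = \phi_w$ is feasible for the minimization.

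Next I would handle the max. As $g$ ranges over $L^2(\SS^{d-1})$, the image $h := \Kc^{(q)}_d g$ ranges over all of $\Hc^{(q)}(\SS^{d-1})$, so
$$\tau_q(w) = \max_{h \in \Hc^{(q)}(\SS^{d-1}) \setminus \{0\}} \frac{|h(w)|^2}{\|h\|_{\SS^{d-1}}^2}.$$
For any such $h$ the reproducing identity specializes to $h(w) = \langle h, \phi_w\rangle_{\SS^{d-1}}$ (applied with $g = h$), so Cauchy--Schwarz gives $|h(w)|^2 \le \|h\|_{\SS^{d-1}}^2 \cdot \|\phi_w\|_{\SS^{d-1}}^2$, with equality at $h \propto \phi_w$ (which is admissible since $\phi_w \in \Hc^{(q)}(\SS^{d-1})$). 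Thus $\tau_q(w) = \|\phi_w\|_{\SS^{d-1}}^2$.

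Finally I would handle the min. Any feasible $g$ decomposes orthogonally as $g = g_\parallel + g_\perp$ with $g_\parallel \in \Hc^{(q)}(\SS^{d-1})$ and $g_\perp \perp \Hc^{(q)}(\SS^{d-1})$; since $\Kc^{(q)}_d$ is the orthogonal projection onto this subspace, the constraint $\Kc^{(q)}_d g = \phi_w$ forces $g_\parallel = \phi_w$. The Pythagorean identity yields $\|g\|_{\SS^{d-1}}^2 = \|\phi_w\|_{\SS^{d-1}}^2 + \|g_\perp\|_{\SS^{d-1}}^2 \ge \|\phi_w\|_{\SS^{d-1}}^2$, with equality at $g = \phi_w$. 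Hence the min in \eqref{eq:lev-score-min-char} also equals $\|\phi_w\|_{\SS^{d-1}}^2$ and the two characterizations agree. The main work is the structural identification of $\Kc^{(q)}_d$ as the orthogonal projection onto $\Hc^{(q)}(\SS^{d-1})$; once that is in place, both problems reduce to one-line Hilbert-space arguments.
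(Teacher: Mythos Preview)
Your argument is correct, but it proceeds differently from the paper's. The paper does not compute either side explicitly; instead it first shows directly that $\min \ge \max$: for any $f$ it writes $[\Kc^{(q)}_d f](w)=\langle \phi_w,f\rangle_{\SS^{d-1}}=\langle \Kc^{(q)}_d g_w^*,f\rangle_{\SS^{d-1}}=\langle g_w^*,\Kc^{(q)}_d f\rangle_{\SS^{d-1}}$ (using only self-adjointness and the constraint on the minimizer $g_w^*$), then applies Cauchy--Schwarz. It then argues, via $\left(\Kc^{(q)}_d\right)^2=\Kc^{(q)}_d$ and a short contradiction, that the minimizer satisfies $\Kc^{(q)}_d g_w^*=g_w^*$, and checks that $f=g_w^*$ attains the maximum, yielding equality. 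By contrast, you first identify $\Kc^{(q)}_d$ concretely as the orthogonal projection onto $\Hc^{(q)}(\SS^{d-1})$ and then evaluate both the max and the min to the common value $\|\phi_w\|_{\SS^{d-1}}^2$ by elementary Hilbert-space arguments. Your route is a bit more explicit and, as a bonus, already delivers the content of \cref{lem:leverage-function-value} (constancy of $\tau_q$) in the same breath; the paper's route is closer to the general leverage-score duality template of \cite{avron2019universal} and would transfer more readily to operators that are not exactly projections.
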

We prove this lemma in \cref{apndx-proof-leverage-score-regression}.
Using the minimization and maximization characterizations of the leverage function we can find upper and lower bounds on this function. Surprisingly, in this case the upper and lower bounds match, so we actually have an exact value for the leverage function. 
\begin{restatable}[Leverage Function is Constant]{lemma}{leveragescoreisconstantlemma}\label{lem:leverage-function-value}
The leverage function given in \cref{def:leverage-function} is equal to
$\tau_q (w) = \sum_{\ell=0}^q \frac{\alpha_{\ell,d}}{|\SS^{d-1}|}$   for every $w \in \SS^{d-1}$.
\end{restatable}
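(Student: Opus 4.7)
The plan is to sandwich $\tau_q(w)$ between matching upper and lower bounds by using both the max characterization (from \cref{def:leverage-function}) and the min characterization (from \cref{lem:min-char-leverage-function}), each evaluated on a single cleverly chosen test function, namely $g = \phi_w$ itself.

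\textbf{Step 1: $\phi_w$ lies in $\Hc^{(q)}(\SS^{d-1})$ and is fixed by $\Kc_d^{(q)}$.} By the Addition Theorem (\cref{lem:additive-formula}), for each $\ell$ the term $\frac{\alpha_{\ell,d}}{|\SS^{d-1}|} P_d^\ell(\langle \sigma, w \rangle) = \sum_{j=1}^{\alpha_{\ell,d}} y_j^\ell(\sigma)\, y_j^\ell(w)$ is a degree-$\ell$ spherical harmonic in $\sigma$. Summing over $\ell \le q$ shows $\phi_w \in \Hc^{(q)}(\SS^{d-1})$. Since $\Kc_d^{(q)}$ is the orthogonal projection onto $\Hc^{(q)}(\SS^{d-1})$ (by \cref{claim:k-square-eqaul-k} together with self-adjointness), we conclude $\Kc_d^{(q)} \phi_w = \phi_w$. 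In particular $\phi_w$ is feasible in the constraint $\Kc_d^{(q)} g = \phi_w$ of \cref{eq:lev-score-min-char}.

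\textbf{Step 2: Compute $\|\phi_w\|_{\SS^{d-1}}^2$.} Expanding the square and applying the orthogonality relation of \cref{claim-gegen-kernel-properties} (the $\ell' \neq \ell$ cross terms vanish), one is left with $\|\phi_w\|_{\SS^{d-1}}^2 = \sum_{\ell=0}^q \frac{\alpha_{\ell,d}^2}{|\SS^{d-1}|} \cdot \E_{\sigma \sim \mathcal{U}(\SS^{d-1})}\!\bigl[ P_d^\ell(\langle \sigma, w\rangle)^2 \bigr]$. Setting $x = y = w$ in the reproducing identity of \cref{claim-gegen-kernel-properties} and using $P_d^\ell(1)=1$ (which is immediate from the Gegenbauer definition since every $j\ge 1$ term has factor $(1-t^2)^j$) gives $\E_\sigma[P_d^\ell(\langle \sigma, w\rangle)^2] = 1/\alpha_{\ell,d}$. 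Hence $\|\phi_w\|_{\SS^{d-1}}^2 = \sum_{\ell=0}^q \frac{\alpha_{\ell,d}}{|\SS^{d-1}|}$. Plugging this into the min characterization immediately yields the upper bound $\tau_q(w) \le \sum_{\ell=0}^q \alpha_{\ell,d}/|\SS^{d-1}|$.

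\textbf{Step 3: Matching lower bound from the max characterization.} Using $g = \phi_w$ in \cref{def:leverage-function}, we have $\Kc_d^{(q)} g = \phi_w$ by Step 1, and $\phi_w(w) = \sum_{\ell=0}^q \frac{\alpha_{\ell,d}}{|\SS^{d-1}|} P_d^\ell(1) = \sum_{\ell=0}^q \frac{\alpha_{\ell,d}}{|\SS^{d-1}|}$. Therefore
\begin{equation*}
\tau_q(w) \ge \frac{|\phi_w(w)|^2}{\|\phi_w\|_{\SS^{d-1}}^2} = \frac{\bigl(\sum_{\ell=0}^q \alpha_{\ell,d}/|\SS^{d-1}|\bigr)^2}{\sum_{\ell=0}^q \alpha_{\ell,d}/|\SS^{d-1}|} = \sum_{\ell=0}^q \frac{\alpha_{\ell,d}}{|\SS^{d-1}|},
\end{equation*}
matching the upper bound and proving equality for every $w \in \SS^{d-1}$.

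The only potentially delicate point is justifying that $\Kc_d^{(q)}$ acts as the identity on $\phi_w$; this is where the reproducing-kernel structure does the real work, and it rests on having both the Addition Theorem (to place $\phi_w$ in $\Hc^{(q)}(\SS^{d-1})$) and idempotency of $\Kc_d^{(q)}$. Everything else is a short orthogonality computation, so there is no serious obstacle once the test function $g = \phi_w$ is identified in both characterizations.
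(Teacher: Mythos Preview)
Your proposal is correct and follows essentially the same approach as the paper: use the single test function $g=\phi_w$ in both the min characterization (\cref{lem:min-char-leverage-function}) and the max characterization (\cref{def:leverage-function}) to obtain matching bounds, with the key computations $\|\phi_w\|_{\SS^{d-1}}^2=\sum_{\ell=0}^q\alpha_{\ell,d}/|\SS^{d-1}|$ and $\phi_w(w)=\sum_{\ell=0}^q\alpha_{\ell,d}/|\SS^{d-1}|$ supplied by \cref{claim-gegen-kernel-properties} and $P_d^\ell(1)=1$. The only cosmetic difference is that the paper verifies $\Kc_d^{(q)}\phi_w=\phi_w$ by a direct reproducing-kernel calculation (\cref{claim-gegen-kernel-properties} applied termwise), whereas you argue via the Addition Theorem that $\phi_w\in\Hc^{(q)}(\SS^{d-1})$ and then invoke the projection property of $\Kc_d^{(q)}$; both routes are valid.
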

\begin{proof}
First we prove that $\tau_q(w) \le \sum_{\ell=0}^q \frac{\alpha_{\ell,d}}{|\SS^{d-1}|}$ using the min-characterization. If we let $\phi_w \in L^2(\SS^{d-1})$ be defined as $\phi_w(\sigma) := \sum_{\ell=0}^q \frac{\alpha_{\ell,d}}{|\SS^{d-1}|}  P_{d}^{\ell}\left( \langle \sigma , w \rangle \right)$, then
by \cref{eq:kernel-operator-def}, for every $\sigma \in \SS^{d-1}$ we can write,
\begin{align}
    \left[\Kc^{(q)}_{d} \phi_w\right](\sigma) &= \sum_{\ell=0}^q \alpha_{\ell,d} \cdot \E_{v \sim \mathcal{U}(\SS^{d-1}) } \left[P_{d}^{\ell}\left( \langle \sigma , v \rangle \right) \cdot \phi_w(v) \right]\nonumber\\
    &= \sum_{\ell=0}^q \sum_{\ell'=0}^q \frac{\alpha_{\ell,d} \alpha_{\ell',d}}{|\SS^{d-1}|} \cdot \E_{v \sim \mathcal{U}(\SS^{d-1}) } \left[ P_{d}^{\ell}\left( \langle \sigma , v \rangle \right) \cdot P_{d}^{\ell'}\left( \langle v , w \rangle \right) \right] \nonumber\\
    &= \sum_{\ell=0}^q \frac{\alpha_{\ell,d}}{|\SS^{d-1}|} P_{d}^{\ell}\left( \langle \sigma , w \rangle \right) = \phi_w(\sigma), \label{eq:K-times-phi}
\end{align}
where the third line above follows from \cref{claim-gegen-kernel-properties}. Therefore, the test function $g := \phi_w$ satisfies the constraint of the minimization in \cref{eq:lev-score-min-char}, i.e., $\Kc^{(q)}_{d} g = \phi_w$. Thus, \cref{lem:min-char-leverage-function} implies that,
\[ \tau_q(w) \le \|g\|_{\SS^{d-1}}^2 = \|\phi_w\|_{\SS^{d-1}}^2 = \sum_{\ell=0}^q \frac{\alpha_{\ell,d}}{|\SS^{d-1}|},\]
where the equality above follows from \cref{claim-gegen-kernel-properties} along with \cref{eq:inner-prod-def}. This establishes the upper bound on the leverage function that we sought to prove. 

Now, using the maximization characterization of the leverage function in \cref{def:leverage-function}, we prove that $\tau_q(w) \ge \sum_{\ell=0}^q \frac{\alpha_{\ell,d}}{|\SS^{d-1}|}$. Again, we consider the same test function $g = \phi_w$ and write,
\begin{align*}
    {\left\| \Kc^{(q)}_{d} \phi_w \right\|_{\SS^{d-1}}^{-2}} \cdot {\left| \left[ \Kc^{(q)}_{d} \phi_w\right](w) \right|^2} &= \frac{\left| \phi_w(w) \right|^2}{\left\| \phi_w \right\|_{\SS^{d-1}}^2}\\
    &= \frac{ \left| \sum_{\ell=0}^q\frac{\alpha_{\ell,d}}{|\SS^{d-1}|} P_{d}^{\ell}\left( \langle w , w \rangle \right) \right|^2 }{\sum_{\ell=0}^q \frac{\alpha_{\ell,d}}{|\SS^{d-1}|}}\\
    &= \frac{ \left| \sum_{\ell=0}^q\frac{\alpha_{\ell,d}}{|\SS^{d-1}|} P_{d}^{\ell}(1) \right|^2 }{\sum_{\ell=0}^q \frac{\alpha_{\ell,d}}{|\SS^{d-1}|}}= \sum_{\ell=0}^q \frac{\alpha_{\ell,d}}{|\SS^{d-1}|},
\end{align*}
where the first and second line above follow from \cref{eq:K-times-phi} and \cref{claim-gegen-kernel-properties}, respectively. Therefore, the max characterization of the leverage function in \cref{def:leverage-function} implies that,
\[ \tau_q(w) \ge {\left\| \Kc^{(q)}_{d} \phi_w \right\|_{\SS^{d-1}}^{-2}} \cdot {\left| \left[ \Kc^{(q)}_{d} \phi_w\right](w) \right|^2} = \sum_{\ell=0}^q \frac{\alpha_{\ell,d}}{|\SS^{d-1}|}. \]
This completes the proof of \cref{lem:leverage-function-value} and establishes that $\tau_q(w)$ is uniformly equal to $\sum_{\ell=0}^q \frac{\alpha_{\ell,d}}{|\SS^{d-1}|}$.

\end{proof}

The integral of the leverage function, which determines the total samples needed to solve our least-squares regression, is therefore equal to the dimensionality of the Hilbert space $\Hc^{(q)}(\SS^{d-1})$.
\begin{corollary}
The leverage function defined in \cref{def:leverage-function} integrates to the dimensionality of the Hilbert space $\Hc^{(q)}(\SS^{d-1})$, which we denote by $\beta_{q,d}$, i.e., 
    \[ \int_{\SS^{d-1}}\tau_q(w)\, dw = {\rm dim}\left( \Hc^{(q)}(\SS^{d-1}) \right) = \sum_{\ell=0}^q \alpha_{\ell,d} \equiv \beta_{q,d}.\]
\end{corollary}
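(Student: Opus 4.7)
The plan is to derive the corollary as a one-line consequence of the preceding lemma, so there is essentially no obstacle; the main task is to assemble two ingredients that are already in place.

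First I would substitute the closed form from \cref{lem:leverage-function-value} into the integral. Since $\tau_q(w) = \sum_{\ell=0}^q \frac{\alpha_{\ell,d}}{|\SS^{d-1}|}$ is independent of $w$, the integral reduces to the constant times the surface area of the sphere:
\[
\int_{\SS^{d-1}} \tau_q(w)\, dw \;=\; \left(\sum_{\ell=0}^q \frac{\alpha_{\ell,d}}{|\SS^{d-1}|}\right) \cdot |\SS^{d-1}| \;=\; \sum_{\ell=0}^q \alpha_{\ell,d}.
\]

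Next I would identify this sum with the dimension of $\Hc^{(q)}(\SS^{d-1})$. By definition $\Hc^{(q)}(\SS^{d-1}) = \bigoplus_{\ell=0}^q \Hc_\ell(\SS^{d-1})$, and \cref{lem:spherical-harmonic-direct-sum-decompose} guarantees that the summands are pairwise orthogonal (hence linearly independent) closed subspaces. Consequently the dimension of the direct sum equals the sum of the dimensions of the summands, which by the definition of $\alpha_{\ell,d}$ gives $\dim\bigl(\Hc^{(q)}(\SS^{d-1})\bigr) = \sum_{\ell=0}^q \alpha_{\ell,d} = \beta_{q,d}$, completing the proof.

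The only conceivable subtlety is justifying the additivity of dimension for the direct sum, but since each $\Hc_\ell(\SS^{d-1})$ is finite-dimensional (with explicit dimension $\alpha_{\ell,d}$) and the sum is over finitely many $\ell$, this is immediate from elementary linear algebra. I do not expect any real obstacle; the corollary is purely a bookkeeping consequence of \cref{lem:leverage-function-value} together with the orthogonal decomposition recorded in \cref{lem:spherical-harmonic-direct-sum-decompose}.
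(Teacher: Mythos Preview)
Your proposal is correct and mirrors exactly the paper's intended derivation: the corollary is stated immediately after \cref{lem:leverage-function-value} without a separate proof, because integrating the constant $\sum_{\ell=0}^q \alpha_{\ell,d}/|\SS^{d-1}|$ over $\SS^{d-1}$ gives $\sum_{\ell=0}^q \alpha_{\ell,d} = \beta_{q,d}$, which is the dimension of $\Hc^{(q)}(\SS^{d-1})$ by the orthogonal direct-sum decomposition.
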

We now show that the  leverage function can be used to randomly sample the points on the unit sphere to discretize the regression problem in \cref{eq:regression-problem} and solve it approximately.

\begin{restatable}[Approximate Regression via Leverage Function Sampling]{theorem}{approxRegLevScore} \label{thm:discretizing-regression-leverage}
For any $\epsilon, \delta >0$, let $s = c \cdot  \frac{\beta_{q,d}}{\epsilon^2} \left( \log \beta_{q,d} + \delta^{-1} \right)$, for sufficiently large fixed constant $c$, and let $w_1, w_2, \ldots , w_s$ be i.i.d. uniform samples on $\SS^{d-1}$. Define the quasi-matrix $\P: \RR^{s} \to L^2(\SS^{d-1})$ as follows, for every $v \in \RR^{d}$:
\[ [\P \cdot v](\sigma) := \sum_{\ell=0}^q \frac{\alpha_{\ell,d}}{\sqrt{s \cdot |\SS^{d-1}|}} \cdot \sum_{j=1}^{s} v_j \cdot P_{d}^{\ell}\left( \langle w_j , \sigma \rangle \right) ~~~\text{ for }\sigma \in \SS^{d-1}. \]
Also let $\f \in \RR^{s}$ be a vector with $\f_j := \sqrt{\frac{|\SS^{d-1}|}{s}} \cdot f(w_j)$ for $j=1,2, \ldots, s$ and let $\P^*$ be the adjoint of $\P$. 
If $\tilde{g}$ is an optimal solution to the following least-squares problem
\[ \tilde{g} \in \argmin_{g \in L^2 \left( \SS^{d-1} \right)} \left\| \P^* g - \f \right\|_2^2, \]
then with probability at least $1-\delta$ the following holds,
\[ \left\| \Kc^{(q)}_{d} \tilde{g} - f \right\|_{\SS^{d-1}}^2 \le (1+\epsilon) \cdot \min_{g \in L^2\left( \SS^{d-1} \right)} \left\| \Kc^{(q)}_{d} g - f \right\|_{\SS^{d-1}}^2. \]
\end{restatable}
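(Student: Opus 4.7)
The plan is to apply the leverage-function-sampling framework for approximate continuous regression developed in \cite{avron2019universal}, leveraging the fact (\cref{lem:leverage-function-value}) that the leverage function $\tau_q$ of $\Kc^{(q)}_d$ is \emph{constant} on $\SS^{d-1}$. This constancy means drawing $w_1,\dots,w_s$ i.i.d.\ from $\mathcal{U}(\SS^{d-1})$ is exactly leverage-function sampling, so the generic guarantees apply. A short calculation from the definition of the adjoint gives
\begin{equation*}
[\P^* g]_j \;=\; \sqrt{|\SS^{d-1}|/s}\cdot [\Kc^{(q)}_d g](w_j),\qquad \|\P^* g - \f\|_2^2 \;=\; \tfrac{|\SS^{d-1}|}{s}\sum_{j=1}^s \left([\Kc^{(q)}_d g](w_j) - f(w_j)\right)^2,
\end{equation*}
so the discrete objective is an unbiased Monte Carlo estimator of the continuous one. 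Since $\P^* g$ depends on $g$ only through $\Kc^{(q)}_d g$, the optimization is effectively a finite-dimensional regression over the $\beta_{q,d}$-dimensional range $\Hc^{(q)}(\SS^{d-1})$.

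The core of the proof is then two concentration arguments on a random matrix. Fix an (abstract) orthonormal basis $\{y_i\}_{i=1}^{\beta_{q,d}}$ of $\Hc^{(q)}(\SS^{d-1})$, whose existence is granted by \cref{lem:spherical-harmonic-direct-sum-decompose} and need not be known explicitly, and let $A \in \RR^{s \times \beta_{q,d}}$ have entries $A_{ji} = \sqrt{|\SS^{d-1}|/s}\cdot y_i(w_j)$. By orthonormality, $\E[A^\top A] = I$; moreover, the Addition Theorem (\cref{lem:additive-formula}) gives the \emph{deterministic} identity $\|A_j\|_2^2 = \tfrac{|\SS^{d-1}|}{s}\sum_i y_i(w_j)^2 = \beta_{q,d}/s$, which is the concrete manifestation of the constant leverage function. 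A matrix Chernoff bound then yields the $\epsilon$-subspace embedding $(1-\epsilon/2)I \preceq A^\top A \preceq (1+\epsilon/2)I$ with probability $\ge 1-\delta/2$ for $s = \Omega\left( \beta_{q,d} \log(\beta_{q,d}/\delta)/\epsilon^2 \right)$. The second argument bounds the cross-term noise: let $g^*$ be a continuous minimizer with residual $r := f - \Kc^{(q)}_d g^*$, noting that $r \perp \Hc^{(q)}(\SS^{d-1})$. Setting $R_j := \sqrt{|\SS^{d-1}|/s}\,r(w_j)$, a direct second-moment computation that uses $\|A_j\|_2^2 = \beta_{q,d}/s$ and the orthogonality of $r$ yields $\E\left[\|A^\top R\|_2^2\right] = \tfrac{\beta_{q,d}}{s}\|r\|_{\SS^{d-1}}^2$, after which Markov's inequality gives a uniform cross-term bound with probability $\ge 1-\delta/2$, contributing the $\delta^{-1}$ term in the sample complexity.

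Plugging both events into the standard normal-equations analysis concludes the proof. Writing $\tilde h := \Kc^{(q)}_d \tilde g \in \Hc^{(q)}(\SS^{d-1})$ and $\Delta := \tilde h - \Kc^{(q)}_d g^*$, Pythagoras gives $\|\tilde h - f\|_{\SS^{d-1}}^2 = \|\Delta\|_{\SS^{d-1}}^2 + \|r\|_{\SS^{d-1}}^2$, while from the discrete normal equations $\Delta = (A^\top A)^{-1} A^\top R$ in coordinates, so combining the subspace-embedding bound on $(A^\top A)^{-1}$ with the cross-term bound on $\|A^\top R\|_2^2$ gives $\|\Delta\|_{\SS^{d-1}}^2 \le \epsilon\|r\|_{\SS^{d-1}}^2$ as required. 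The main obstacle is the cross-term control: the subspace embedding is a textbook matrix Chernoff computation once the leverage profile is constant, but the residual $r$ lives in the infinite-dimensional orthogonal complement of $\Hc^{(q)}(\SS^{d-1})$ and can be pointwise very large, so one must work only with $\|r\|_{\SS^{d-1}}$. It is the constancy of $\tau_q$ that makes the variance of $\|A^\top R\|_2^2$ scale as $\|r\|^2/s$ uniformly over the subspace, and this is what accounts for the $\delta^{-1}$ (rather than $\log(1/\delta)$) term in the stated sample complexity.
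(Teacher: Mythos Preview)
Your proposal is correct and takes essentially the same approach as the paper: both arguments rest on (i) a matrix-Chernoff subspace embedding, enabled by the constant leverage function (your deterministic row-norm identity $\|A_j\|_2^2=\beta_{q,d}/s$ via the Addition Theorem is exactly the paper's $\|\Rc_j\|_{op}=\beta_{q,d}$), and (ii) a second-moment + Markov bound on the cross-term with the residual $r\perp\Hc^{(q)}(\SS^{d-1})$, which produces the $\delta^{-1}$ factor. The only cosmetic difference is that the paper works coordinate-free with the operator $\P\P^*$ and packages the two events into an affine-embedding inequality, whereas you fix an orthonormal basis, form the finite matrix $A$, and finish via the normal equations $\Delta=(A^\top A)^{-1}A^\top R$; these are standard equivalent routes and the quantities match ($\P\P^*|_{\Hc^{(q)}}\cong A^\top A$ and $\|\P(\P^*f-\f)\|_{\SS^{d-1}}=\|A^\top R\|_2$).
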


We prove this theorem in \cref{apndx-proof-leverage-score-regression}.
\cref{thm:discretizing-regression-leverage} shows that the function $\tilde{g}$ obtained from solving the discretized regression problem provides an approximate solution to \cref{eq:regression-problem}.

\subsection{Efficient Solution for the Discretized Least-Squares Problem}
In this section, we demonstrate how to apply \cref{thm:discretizing-regression-leverage} algorithmically to approximately solve the regression problem of \cref{eq:regression-problem}. Specifically, we show how to use the \emph{kernel trick} to solve the randomly discretized least squares problem efficiently.

\begin{algorithm}[h]
\caption{Efficient Spherical Harmonic Expansion} \label{alg:leverage-sample-regression}
\begin{algorithmic}[1]
\State {\bf input:} accuracy parameter $\epsilon>0$, failure probability $\delta \in (0,1)$, integer $q\ge 0$
\State Set $s = c \cdot \frac{\beta_{q,d}}{\epsilon^2} (\log \beta_{q,d} + \delta^{-1})$ for sufficiently large fixed constant $c$
\State{Sample i.i.d. random points $w_1, w_2, \ldots , w_s$ from a uniform distribution on $\SS^{d-1}$}\label{alg-line-randompoints-w}
\State{Compute $\K \in \RR^{s\times s}$ with $\K_{i,j} = \sum_{\ell=0}^q \frac{\alpha_{\ell,d}}{s} \cdot P_{d}^{\ell}\left( \langle w_i , w_j \rangle \right)$ for $i,j \in [s]$}\label{alg-line-kernel-def}
\State{Compute $\f \in \RR^{s}$ with $\f_j = \sqrt{\frac{|\SS^{d-1}|}{s}} \cdot f(w_j)$ for $j\in [s]$}\label{alg-line-sampled-function}
\State{Solve the regression by computing $\z = \K^{\dagger} \f$}\label{alg-line-opt-solution-regression}
\State{{\bf return } $y \in \Hc^{(q)}(\SS^{d-1})$ with $y(\sigma) := \sum_{\ell=0}^q \frac{\alpha_{\ell,d}}{\sqrt{s \cdot |\SS^{d-1}|}} \cdot \sum_{j=1}^s \z_j \cdot P_{d}^{\ell}\left( \langle w_j , \sigma \rangle \right)$ for $\sigma \in \SS^{d-1}$}\label{alg-line-output-function}
\end{algorithmic}
\end{algorithm}

\begin{restatable}[Efficient Spherical Harmonic Interpolation]{theorem}{efficientsphericqalharmoicinterpolationtheorem}\label{thm:efficient-regression-alg}
\cref{alg:leverage-sample-regression} returns a function $y \in \Hc^{(q)}(\SS^{d-1})$ such that, with probability at least $1-\delta$:
\[ \left\| y - f^{(q)} \right\|_{\SS^{d-1}}^2 \le \epsilon \cdot \left\| f^{(q)} - f \right\|_{\SS^{d-1}}^2, \]
where $f^{(q)} := \Kc^{(q)}_{d} f$. 
Suppose we can compute
the Gegenbauer polynomial $P_{d}^{\ell}(t)$ at every point $t \in [-1,1]$ in constant time.  \cref{alg:leverage-sample-regression} queries the function $f$ at $s = \bigo \left( \frac{\beta_{q,d}}{\epsilon^2} \left( \log \beta_{q,d} + \delta^{-1} \right) \right)$
points on the sphere $\SS^{d-1}$ and runs in $\bigo(s^2 \cdot d + s^{\omega})$
time. This algorithm evaluates $y(\sigma)$ in $\bigo(d \cdot s)$ time for any $\sigma \in \SS^{d-1}$.

\end{restatable}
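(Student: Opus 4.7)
The plan is to reduce \cref{thm:efficient-regression-alg} to two already-established results and to show that \cref{alg:leverage-sample-regression} is a concrete implementation of their combination via the kernel trick. The first pillar, \cref{thm:discretizing-regression-leverage}, says that if $\tilde g$ is any minimizer of $\|\P^* g - \f\|_2^2$ over $L^2(\SS^{d-1})$, then with probability at least $1-\delta$ we have $\|\Kc_d^{(q)} \tilde g - f\|_{\SS^{d-1}}^2 \le (1+\epsilon)\min_g \|\Kc_d^{(q)} g - f\|_{\SS^{d-1}}^2$. The second pillar, \cref{claim-interpolation-via-least-squares} with $C = 1+\epsilon$, then converts this regression guarantee into the desired interpolation bound $\|\Kc_d^{(q)} \tilde g - f^{(q)}\|_{\SS^{d-1}}^2 \le \epsilon\|f^{(q)} - f\|_{\SS^{d-1}}^2$. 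What remains is to verify that the function $y$ returned on line \ref{alg-line-output-function} coincides with $\Kc_d^{(q)} \tilde g$ for some such optimizer $\tilde g$.

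To establish the kernel-trick identification, I would first compute the adjoint $\P^*$. Using \cref{eq:inner-prod-def} together with the notation $\phi_w(\sigma) = \sum_{\ell=0}^q \frac{\alpha_{\ell,d}}{|\SS^{d-1}|} P_d^\ell(\langle \sigma, w\rangle)$ from \cref{lem:min-char-leverage-function}, a direct calculation gives $[\P v](\sigma) = \sqrt{|\SS^{d-1}|/s}\,\sum_j v_j \phi_{w_j}(\sigma)$ and $[\P^* g]_j = \sqrt{|\SS^{d-1}|/s}\cdot [\Kc_d^{(q)} g](w_j)$. Every column of $\P$ lies in $\Hc^{(q)}(\SS^{d-1})$, on which $\Kc_d^{(q)}$ acts as the identity by \cref{claim:k-square-eqaul-k}, so composing these two maps yields $\P^* \P = \K$, precisely the matrix built on line \ref{alg-line-kernel-def}. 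Next, since $\|\P^* g - \f\|_2^2$ depends on $g$ only through its orthogonal projection onto $\mathrm{col}(\P) = \mathrm{span}\{\phi_{w_j}\}_{j=1}^s$, the minimum over $L^2(\SS^{d-1})$ is attained by some $\tilde g = \P\z$; substituting collapses the loss to $\|\K\z - \f\|_2^2$, whose minimum-norm solution is $\z = \K^\dagger \f$, exactly what is computed on line \ref{alg-line-opt-solution-regression}. Because $\tilde g = \P\z \in \Hc^{(q)}(\SS^{d-1})$, we finally have $\Kc_d^{(q)} \tilde g = \tilde g = y$, closing the loop.

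The runtime analysis is then routine: drawing the $s$ uniform samples costs $\bigo(sd)$; forming $\K$ requires the $s^2$ pairwise inner products in $\RR^d$ for a total of $\bigo(s^2 d)$ (the Gegenbauer partial sums contribute a lower-order factor under the constant-time oracle assumption); querying $f$ at the $w_j$ amounts to $s$ oracle calls; solving the positive-semidefinite system $\K\z=\f$ by pseudoinverse costs $\bigo(s^\omega)$; and evaluating $y(\sigma)$ at a fresh $\sigma$ reduces to $s$ inner products of $\bigo(d)$ cost each. The main subtlety I anticipate is the reduction from the $L^2(\SS^{d-1})$ regression to the finite $\RR^s$ regression, which rests on the identification $\ker(\P^*) = \mathrm{col}(\P)^\perp$; this in turn uses in an essential way the reproducing identity (\cref{claim-gegen-kernel-properties}) that makes $[\Kc_d^{(q)} g](w_j)$ coincide with $\langle g, \phi_{w_j}\rangle_{\SS^{d-1}}$, together with the projection property $\Kc_d^{(q)}(\P v) = \P v$ ensuring $\P^*\P = \K$.
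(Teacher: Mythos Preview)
Your proposal is correct and follows essentially the same route as the paper: invoke \cref{thm:discretizing-regression-leverage}, argue via $\ker(\P^*)=\mathrm{col}(\P)^\perp$ that an optimizer of the form $\tilde g=\P\z$ exists, verify $\P^*\P=\K$ (the paper does this entrywise via \cref{claim-gegen-kernel-properties}, you do it via $[\P^*g]_j=\sqrt{|\SS^{d-1}|/s}\,[\Kc_d^{(q)}g](w_j)$ combined with $\Kc_d^{(q)}\phi_{w_j}=\phi_{w_j}$, which is the same computation), conclude $\z=\K^\dagger\f$ and $y=\P\z=\Kc_d^{(q)}\tilde g$, and finish with \cref{claim-interpolation-via-least-squares} for $C=1+\epsilon$; the runtime breakdown is likewise identical.
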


For a proof of this theorem see~\cref{proofofefficientalgorithm}.


\section{Lower Bound on The Number of Required Observations} \label{sec:lower-bound}
We conclude by showing that the dimensionality of the Hilbert space $\Hc^{(q)}(\SS^{d-1})$ tightly characterizes the sample complexity of \cref{interpolation-problem}. Thus, our \cref{thm:efficient-regression-alg} is optimal up to a logarithmic factor.
The crucial fact that we use for proving the lower bound is that all the eigenvalues of the operator $\Kc^{(q)}_{d}$ are equal to one. This fact follows from the addition theorem presented in \cref{lem:additive-formula}.
By this lemma, if $\left\{ y^\ell_1, y^\ell_2, \ldots , y^\ell_{\alpha_{\ell,d}} \right\}$ is an orthonormal basis of $\Hc_{\ell}\left( \SS^{d-1} \right)$, then for any function $f \in L^2\left( \SS^{d-1} \right)$,
\begin{align}
    \left[\Kc^{(q)}_{d} f \right](\sigma) &= \sum_{\ell=0}^q \alpha_{\ell,d} \cdot \E_{w \sim \mathcal{U}(\SS^{d-1}) } \left[ P_{d}^{\ell}\left( \langle \sigma , w \rangle \right) \cdot f(w) \right]\nonumber\\
    &= \sum_{\ell=0}^q \left| \SS^{d-1} \right| \cdot \E_{w \sim \mathcal{U}(\SS^{d-1}) }\left[ \sum_{j=1}^{\alpha_{\ell,d}} y^\ell_j(\sigma) \cdot y^\ell_j(w) \cdot f(w) \right]\nonumber\\
    &= \sum_{\ell=0}^q \sum_{j=1}^{\alpha_{\ell,d}} \langle y^\ell_j, f \rangle_{\SS^{d-1}} \cdot y^\ell_j(\sigma). \label{eq:eigendecomposition-kernel-operator}
\end{align}
This shows that all (non-zero) eigenvalues of the operator $\Kc^{(q)}_{d}$ are equal to one.

\begin{theorem}[Lower Bound]\label{thm:lower-bound} Consider an error parameter $\epsilon > 0$, and any (possibly randomized) algorithm that solves \cref{interpolation-problem} with probability greater than $1/10$ for any input function $f$ and makes at most $r$ (possibly adaptive) queries on any input. Then $r \ge \beta_{q,d}$.
\end{theorem}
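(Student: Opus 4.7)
The key structural observation, already used just above the theorem statement, is that $\Kc^{(q)}_d$ acts as the identity on $\Hc^{(q)}(\SS^{d-1})$; every non-zero eigenvalue equals $1$. Consequently, when the input $f$ lies in $\Hc^{(q)}(\SS^{d-1})$, one has $f^{(q)} = f$ and the noise term $\| f - f^{(q)} \|_{\SS^{d-1}}^2$ vanishes, so the approximation guarantee of \cref{interpolation-problem} collapses to exact recovery of $f$. My plan is therefore to reduce the proof to: no algorithm using fewer than $\beta_{q,d}$ point samples can exactly recover $f \in \Hc^{(q)}(\SS^{d-1})$ with probability greater than $1/10$.

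As a warm-up, and to isolate the pigeonhole/dimension content, I would first handle the deterministic case. For any deterministic adaptive algorithm $A$ with $r < \beta_{q,d}$ queries, run it on $f \equiv 0$: every observation is $0$, so the sequence of query points $w_1^0, \ldots, w_r^0$ and the output $\tilde f^0 \in \Hc^{(q)}(\SS^{d-1})$ are fully determined, and correctness forces $\tilde f^0 = 0$. Because the $r$ evaluation functionals $f \mapsto f(w_i^0)$ on the $\beta_{q,d}$-dimensional space $\Hc^{(q)}(\SS^{d-1})$ can span at most an $r$-dimensional subspace of its dual, they have a non-trivial common kernel; pick any $f^\star \neq 0$ in this kernel. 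Feeding $f^\star$ to $A$ yields the identical all-zero transcript, hence the same output $\tilde f^0 = 0 \neq f^\star$, so $A$ fails on $f^\star$.

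To upgrade to randomized algorithms I would invoke Yao's minimax principle against the hard distribution $\mu$ given by a standard Gaussian on $\Hc^{(q)}(\SS^{d-1}) \cong \RR^{\beta_{q,d}}$ (using any orthonormal basis, e.g.\ one supplied by the addition theorem \cref{lem:additive-formula}). Fix a deterministic $A$ with $r < \beta_{q,d}$ adaptive queries; since each query point $w_i$ is a deterministic function of the previously observed values, the whole transcript and the output $\tilde f$ are parameterized by the observation vector $(v_1, \ldots, v_r) \in \RR^r$. Conditioning on $(v_1, \ldots, v_r)$ fixes the queried points $w_1, \ldots, w_r$, and the preimage in $\Hc^{(q)}(\SS^{d-1})$ is the affine subspace $\{ f : f(w_i) = v_i,\, 1 \le i \le r \}$, of dimension at least $\beta_{q,d} - r \ge 1$. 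Under $\mu$ the disintegration is a non-degenerate lower-dimensional Gaussian on this positive-dimensional subspace, while $\tilde f$ is a single deterministic point; the conditional probability of exact recovery is therefore $0$, so $\Pr_{f \sim \mu}[A \text{ succeeds}] = 0$. Averaging over the randomness $R$ via Fubini,
\[
\E_R\bigl[\Pr_{f \sim \mu}[A_R \text{ succeeds on } f]\bigr] \;=\; \E_{f \sim \mu}\bigl[\Pr_R[A \text{ succeeds on } f]\bigr] \;=\; 0,
\]
so $\mu$-almost every $f$ is an input on which the randomized algorithm succeeds with probability $0$, contradicting the hypothesized $>1/10$ lower bound for every input.

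The main point I expect to handle carefully is the conditional-probability step in the adaptive setting: the query locations $w_i$ are themselves random functions of $f$, so one must first observe that once the observed values $(v_1, \ldots, v_r)$ are fixed, the $w_i$ become fixed, and only then are the constraints $f(w_i) = v_i$ genuinely linear in $f$. Choosing $\mu$ to be the absolutely continuous Gaussian (rather than, say, the uniform surface measure on a sphere) sidesteps regularity/measure-zero subtleties that could otherwise arise for pathological algorithms, making the non-atomic disintegration an immediate consequence of the standard Gaussian conditioning formula.
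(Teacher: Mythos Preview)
Your proposal is correct and follows essentially the same approach as the paper: Yao's principle against a standard Gaussian on $\Hc^{(q)}(\SS^{d-1})$, the observation that $f\in\Hc^{(q)}(\SS^{d-1})$ forces exact recovery, and the key step that with $r<\beta_{q,d}$ queries the conditional law of $f$ given the transcript retains a positive-dimensional Gaussian component, so exact recovery has probability zero. The only stylistic difference is that the paper makes the last step concrete via an orthonormal completion $\Q^r$ of the query directions and invokes a cited claim that $[\Q^r\v](\beta_{q,d})\sim\mathcal N(0,1)$ conditionally, whereas you phrase the same fact as a Gaussian disintegration over affine fibers.
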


We prove this lower bound by describing a distribution on the input functions $f$ on which any deterministic algorithm that takes $r < \beta_{q,d}$ samples on any input fails with probability greater than $9/10$. The theorem then follows by Yao’s principle.

\paragraph{Hard Input Distribution.} For any integer $\ell \le q$, consider an orthonormal basis of $\Hc_{\ell}\left( \SS^{d-1} \right)$ and denote it by $\left\{ y^\ell_1, y^\ell_2, \ldots, y^\ell_{\alpha_{\ell,d}} \right\}$. 
Let $\Y_{\ell} :\RR^{\alpha_{\ell,d}} \to \Hc_{\ell}\left( \SS^{d-1} \right)$ be the quasi-matrix with $y^\ell_j$ as its $j^{th}$ column, i.e., $[\Y_{\ell} \cdot u](\sigma) := \sum_{j=1}^{\alpha_{\ell,d}} u_j \cdot y^\ell_j(\sigma)$ for any $u \in \RR^{\alpha_{\ell,d}}$ and $\sigma \in \SS^{d-1}$.
Let vectors $v^{(0)} \in \RR^{\alpha_{0,d}}, v^{(1)} \in \RR^{\alpha_{1,d}}, \ldots, v^{(q)} \in \RR^{\alpha_{q,d}}$ be independent random vectors with each entry distributed independently as a Gaussian: $v^{(\ell)}_j \sim \mathcal{N}(0,1)$.
The random input is defined to be $f := \sum_{\ell=0}^q \Y_{\ell} \cdot v^{(\ell)}$. In other words, $f = \sum_{\ell=0}^q \Y_{\ell} \cdot v^{(\ell)}$ is a random linear combination of the eigenfunctions of $\Kc^{(q)}_{d}$.

We prove that accurate reconstruction of function $f$ drawn from the above-mentioned hard input distribution yields an accurate reconstruction of the random vectors $v^{(0)}, v^{(1)}, \ldots, v^{(q)}$. Since each $v^{(\ell)}$ is $\alpha_{\ell,d}$-dimensional, this reconstruction requires $\Omega(\sum_{\ell=0}^q \alpha_{\ell,d}) = \Omega( \beta_{q,d})$ samples, giving us a lower bound for accurately reconstructing $f$.

\begin{restatable}{claim}{claimlowerboundhardinputneedszeroregressionerror}\label{problem1-guarantee-lowerbound}
Given the random input $f = \sum_{\ell=0}^q \Y_{\ell} \cdot v^{(\ell)}$ generated as described above, to solve \cref{interpolation-problem}, an algorithm must return a function $\tilde{f}^{(q)} \in \Hc^{(q)}\left( \SS^{d-1} \right)$ such that $\| \tilde{f}^{(q)} - f \|_{\SS^{d-1}}^2 =0$.
\end{restatable}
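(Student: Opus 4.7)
The plan is to exploit the fact that, by construction, the random input $f = \sum_{\ell=0}^q \Y_\ell \cdot v^{(\ell)}$ lies entirely inside the bounded-degree spherical-harmonic space $\Hc^{(q)}(\SS^{d-1})$, so the high-degree ``noise'' component of $f$ vanishes and the interpolation requirement degenerates to exact reconstruction.

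First I would unpack the construction: since each column of $\Y_\ell$ belongs to $\Hc_\ell(\SS^{d-1})$, the function $\Y_\ell \cdot v^{(\ell)} = \sum_{j=1}^{\alpha_{\ell,d}} v^{(\ell)}_j \cdot y^\ell_j$ lies in $\Hc_\ell(\SS^{d-1})$, and hence $f \in \bigoplus_{\ell=0}^q \Hc_\ell(\SS^{d-1}) = \Hc^{(q)}(\SS^{d-1})$. By the uniqueness of the decomposition in \cref{lem:spherical-harmonic-direct-sum-decompose}, the degree-$\ell$ component of $f$ is exactly $f_\ell = \Y_\ell \cdot v^{(\ell)}$ for $\ell \le q$, and $f_\ell \equiv 0$ for $\ell > q$. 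Therefore
\[
    f^{(q)} = \sum_{\ell=0}^q f_\ell = f, \qquad \text{and thus} \qquad \left\| f^{(q)} - f \right\|_{\SS^{d-1}}^2 = 0.
\]

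Next I would invoke the error guarantee imposed by \cref{interpolation-problem}: any admissible algorithm must return $\tilde{f}^{(q)} \in \Hc^{(q)}(\SS^{d-1})$ with
\[
    \left\| \tilde{f}^{(q)} - f^{(q)} \right\|_{\SS^{d-1}}^2 \le \epsilon \cdot \left\| f^{(q)} - f \right\|_{\SS^{d-1}}^2 = 0.
\]
Since the $L^2$-norm is non-negative, this forces $\|\tilde{f}^{(q)} - f^{(q)}\|_{\SS^{d-1}}^2 = 0$, and combined with $f = f^{(q)}$ we obtain $\|\tilde{f}^{(q)} - f\|_{\SS^{d-1}}^2 = 0$, which is exactly the claim.

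There is essentially no obstacle here; the claim is a one-line consequence of the two facts that (i) $f$ was engineered to have zero projection onto harmonics of degree above $q$, making the right-hand side of the interpolation guarantee vanish, and (ii) an $\epsilon$ relative-error guarantee against a zero baseline collapses to exact recovery. The real work of the lower bound will come in the follow-up argument that exactly reconstructing such a random $f$ from fewer than $\beta_{q,d}$ samples is information-theoretically impossible, but the present claim itself is immediate from the setup.
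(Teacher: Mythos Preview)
Your proposal is correct and follows essentially the same approach as the paper: both arguments establish that $f^{(q)}=f$ (the paper does this via the operator identity $\Kc_d^{(q)} f = \sum_{\ell'} \Y_{\ell'}\Y_{\ell'}^* f$ and the orthonormality of the $\Y_\ell$'s, while you invoke the direct-sum decomposition of \cref{lem:spherical-harmonic-direct-sum-decompose} directly), and then both plug this into the error guarantee of \cref{interpolation-problem} to force $\|\tilde f^{(q)}-f\|_{\SS^{d-1}}^2=0$.
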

We prove this claim in \cref{appndx-lower-bound}.
Now we show that finding an $\tilde{f}^{(q)}$ satisfying the condition of \cref{interpolation-problem} is at least as hard as accurately finding all vectors $v^{(0)}, v^{(1)}, \ldots, v^{(q)}$.

\begin{restatable}{lemma}{lemlowerboundhardinstancecoeffsmustberecoverable}\label{lem:problem1-ashardas-finding-v}
If a deterministic algorithm solves \cref{interpolation-problem} with probability at least $1/10$ over our random input distribution $f = \sum_{\ell=0}^q \Y_{\ell} \cdot v^{(\ell)}$, then with probability at least $1/10$, the output of the algorithm $\tilde{f}^{(q)}$ satisfies $\Y_{\ell}^* \tilde{f}^{(q)} = v^{(\ell)}$ for all integers $\ell\le q$.
\end{restatable}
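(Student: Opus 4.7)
The plan is to combine \cref{problem1-guarantee-lowerbound}, which forces exact recovery $\tilde{f}^{(q)}=f$ whenever the algorithm succeeds on this hard input, with the orthonormality and pairwise orthogonality of the spherical harmonic bases, and then to read off each $v^{(\ell)}$ from $\tilde{f}^{(q)}$ by applying the adjoint quasi-matrix $\Y_\ell^*$.

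First I would observe that $f = \sum_{\ell=0}^q \Y_\ell v^{(\ell)}$ already lies in $\Hc^{(q)}(\SS^{d-1})$ by construction, so the uniqueness part of the Hilbert-sum decomposition in \cref{lem:spherical-harmonic-direct-sum-decompose} identifies its spherical harmonic components as $f_\ell = \Y_\ell v^{(\ell)}$ for $\ell \le q$ and $f_\ell = 0$ for $\ell > q$. In particular $f^{(q)} = f$, so the baseline error $\|f^{(q)} - f\|_{\SS^{d-1}}^2$ in \cref{interpolation-problem} vanishes. Applying \cref{problem1-guarantee-lowerbound}, on the (at least $1/10$)-probability event that the algorithm solves \cref{interpolation-problem}, its output must satisfy $\|\tilde{f}^{(q)} - f\|_{\SS^{d-1}}^2 = 0$, equivalently $\tilde{f}^{(q)} = \sum_{\ell'=0}^q \Y_{\ell'} v^{(\ell')}$ as elements of $L^2(\SS^{d-1})$.

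Next I would fix $\ell \in \{0, 1, \ldots, q\}$ and apply $\Y_\ell^*$ to both sides of this identity. By definition of the quasi-matrix adjoint, $(\Y_\ell^* g)_j = \langle y^\ell_j, g \rangle_{\SS^{d-1}}$, so orthonormality of the basis $\{y^\ell_j\}_{j=1}^{\alpha_{\ell,d}}$ gives $\Y_\ell^* \Y_\ell = I_{\alpha_{\ell,d}}$, while the pairwise orthogonality of the subspaces $\Hc_\ell(\SS^{d-1})$ stated in \cref{lem:spherical-harmonic-direct-sum-decompose} yields $\Y_\ell^* \Y_{\ell'} = \mathbf{0}$ whenever $\ell' \neq \ell$. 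Combining these, $\Y_\ell^* \tilde{f}^{(q)} = \sum_{\ell'=0}^q \Y_\ell^* \Y_{\ell'} v^{(\ell')} = v^{(\ell)}$, which is exactly the claimed identity; since this holds for every $\ell \le q$ simultaneously on the same event of probability at least $1/10$, the lemma follows.

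I do not expect a genuine obstacle here: the whole argument reduces to bookkeeping the fact that the quasi-matrix adjoint $\Y_\ell^*$ realises the $L^2(\SS^{d-1})$-inner product against the orthonormal basis $\{y^\ell_j\}$, so the orthogonality relations of \cref{lem:spherical-harmonic-direct-sum-decompose} translate directly into the identities $\Y_\ell^* \Y_{\ell'} = \delta_{\ell \ell'}\, I$ that collapse the sum.
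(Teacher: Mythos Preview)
Your proposal is correct and follows essentially the same approach as the paper: invoke \cref{problem1-guarantee-lowerbound} to get $\tilde{f}^{(q)}=f$ on the success event, then apply $\Y_\ell^*$ and use the orthonormality relations $\Y_\ell^*\Y_{\ell'}=\delta_{\ell\ell'} I_{\alpha_{\ell,d}}$ to extract $v^{(\ell)}$. The paper's proof is slightly terser (writing $\Y_\ell^*\tilde{f}^{(q)}=\Y_\ell^* f+\Y_\ell^*(\tilde{f}^{(q)}-f)$ and noting the second term vanishes), but the content is identical.
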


Finally, we complete the proof of \cref{thm:lower-bound} by arguing that if $\tilde{f}^{(q)}$ is formed using less than $\beta_{q,d}$ queries from $f$, then $\sum_{\ell=0}^q \left\| \Y_{\ell}^* \tilde{f}^{(q)} - v^{(\ell)} \right\|_2^2 > 0$ with good probability. Thus the bound of \cref{lem:problem1-ashardas-finding-v} cannot hold and so $\tilde{f}^{(q)}$ cannot be a solution to \cref{interpolation-problem} with good probability.
Assume for the sake of contradiction that there is a deterministic algorithm which solves \cref{interpolation-problem} with probability at least $1/10$ over the random input $f = \sum_{\ell=0}^q \Y_{\ell} \cdot v^{(\ell)}$ that makes $r = \beta_{q,d} - 1$ queries on any input (we can always modify an algorithm that makes fewer queries on some inputs to make exactly $\beta_{\ell,d} - 1$ queries and return the same output).
For every $\sigma \in \SS^{d-1}$ and integer $\ell \le q$ let the vector $u_{\sigma}^{\ell} \in \RR^{\alpha_{\ell,d}}$ be defined as $u_{\sigma}^\ell := \left[ y^\ell_1(\sigma), y^\ell_2(\sigma), \ldots, y^\ell_{\alpha_{\ell,d}}(\sigma) \right]$. Also define $\u_\sigma \in \RR^{\beta_{q,d}}$ as $\u_\sigma := \left[ u^0_\sigma, u^1_\sigma, \ldots, u^q_\sigma \right]$.
Furthermore, define $\v \in \RR^{\beta_{q,d}}$ as $\v := \left( v^{(0)}, v^{(1)}, \ldots, v^{(q)} \right)$. Additionally, define the quasi-matrix $\Y := \left[ \Y_0, \ldots, \Y_q \right]$.

Using the above notations and the definition of the hard input instance $f$, each query to $f$ is in fact a query to the random vector $\v$ in the form of $f(\sigma) = \langle \u_\sigma, \v \rangle$. Now consider a deterministic function $Q$, that is given input $\V \in \RR^{i \times \beta_{q,d}}$ (for any positive integer $i$) and outputs $Q(\V) \in \RR^{\beta_{q,d} \times \beta_{q,d}}$ such that $Q(\V)$ has orthonormal rows with the first $i$ rows spanning the $i$ rows of $\V$. 
If $\sigma_1, \sigma_2, \ldots, \sigma_r \in \SS^{d-1}$ denote the points where our algorithm queries the input $f$, for any integer $i \in [r]$, let: 
\[\Q^i := Q\left( [ \u_{\sigma_1}, \u_{\sigma_2}, \ldots, \u_{\sigma_i} ]^\top \right).\]
That is $\Q^i$ is an orthonormal matrix whose first $i$ rows span the first $i$ queries of the algorithm. Note that since our algorithm is deterministic, $\Q^i$ is a deterministic function of the random input $\v$. We have the following claim from \cite{avron2019universal}:
\begin{claim}[Claim~23 of \cite{avron2019universal}]\label{claim:distribution-queries-deterministice-alg}
Conditioned on the queries $f(\sigma_1), f(\sigma_2), \ldots, f(\sigma_r)$ for $r < \beta_{q,d}$, the variable $[\Q^r\cdot \v]({\beta_{q,d}})$ is distributed as $\mathcal{N}(0,1)$.
\end{claim}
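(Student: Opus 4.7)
The plan is a standard adaptive-Gaussian-conditioning argument. Intuitively, each query reveals one scalar linear functional of $\v$ and thus ``uses up'' one degree of freedom, but the component of $\v$ orthogonal to the accumulated query directions remains a fresh isotropic Gaussian. Because the last row of $\Q^r$ lives in that orthogonal complement and is a unit vector determined by the queries, its inner product with $\v$ has distribution $\mathcal{N}(0,1)$ even after conditioning.

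Set $\mathcal{V}_i := \mathrm{span}(\u_{\sigma_1}, \ldots, \u_{\sigma_i}) \subseteq \RR^{\beta_{q,d}}$, with $\mathcal{V}_0 = \{0\}$. Because the algorithm is deterministic, $\sigma_{i+1}$ (and hence $\u_{\sigma_{i+1}}$) is a deterministic function of $f(\sigma_1), \ldots, f(\sigma_i)$; conditioning on these queries therefore makes the nested subspaces $\mathcal{V}_0 \subseteq \cdots \subseteq \mathcal{V}_r$ and the orthonormal matrix $\Q^r$ produced by $Q(\cdot)$ deterministic quantities. I will prove the following invariant by induction on $i$: conditional on $f(\sigma_1), \ldots, f(\sigma_i)$, the law of $\v$ equals $\m_i + \g_i$, where $\m_i \in \mathcal{V}_i$ is determined by the queries and $\g_i$ is an isotropic standard Gaussian supported on $\mathcal{V}_i^\perp$.

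The base case $i=0$ is just the prior $\v \sim \mathcal{N}(0, \I_{\beta_{q,d}})$. For the inductive step, decompose $\u_{\sigma_{i+1}} = u_\parallel + u_\perp$ with $u_\parallel \in \mathcal{V}_i$ and $u_\perp \in \mathcal{V}_i^\perp$; then $f(\sigma_{i+1}) = \langle u_\parallel, \m_i \rangle + \langle u_\perp, \g_i \rangle$, and the first summand is already known from the previous queries. Hence observing $f(\sigma_{i+1})$ is equivalent to observing the scalar $\langle u_\perp, \g_i \rangle$. By rotational invariance of the isotropic Gaussian $\g_i$ on $\mathcal{V}_i^\perp$, conditioning on this single linear functional fixes the component of $\g_i$ along the deterministic unit direction $u_\perp/\|u_\perp\|$ and leaves the remainder --- which lives in $\mathcal{V}_{i+1}^\perp$ --- isotropic standard Gaussian; absorbing the newly-fixed component into $\m_{i+1}$ completes the induction. (If $u_\perp = 0$ the new query is redundant, $\mathcal{V}_{i+1} = \mathcal{V}_i$, and the invariant is preserved trivially.)

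Instantiating the invariant at $i = r$ and noting that $\mathcal{V}_r^\perp$ has positive dimension because $r < \beta_{q,d}$, let $\q$ denote the $\beta_{q,d}$-th row of $\Q^r$. By construction of $Q(\cdot)$, this $\q$ is a unit vector in $\mathcal{V}_r^\perp$, deterministic given the queries. Therefore $[\Q^r \v]({\beta_{q,d}}) = \langle \q, \m_r \rangle + \langle \q, \g_r \rangle = 0 + \langle \q, \g_r \rangle$, where the first term vanishes because $\q \perp \mathcal{V}_r \ni \m_r$. Since $\q$ is a unit vector in $\mathcal{V}_r^\perp$ and $\g_r$ is isotropic standard Gaussian on $\mathcal{V}_r^\perp$, the scalar $\langle \q, \g_r \rangle$ is $\mathcal{N}(0,1)$, as required. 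The only real obstacle is the adaptive bookkeeping --- one must be careful that $\m_i$, $\mathcal{V}_i$, and $\q$ are all measurable with respect to the already-revealed queries; once the invariant is stated precisely, each inductive step collapses to the elementary fact that conditioning an isotropic Gaussian on one of its linear functionals preserves isotropy on the orthogonal hyperplane.
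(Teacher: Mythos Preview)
Your argument is correct. The paper does not actually supply a proof of this claim; it simply quotes it as Claim~23 of \cite{avron2019universal} and uses it as a black box. Your adaptive-conditioning proof is exactly the standard way to establish such a statement: the inductive invariant that the conditional law of $\v$ splits as a deterministic vector in $\mathcal{V}_i$ plus an isotropic Gaussian on $\mathcal{V}_i^\perp$ is precisely what is needed, and you handle the measurability bookkeeping (that $\sigma_{i+1}$, $\u_{\sigma_{i+1}}$, and hence $\Q^r$ are determined by the earlier query values) carefully. The redundant-query case $u_\perp = 0$ is also dealt with cleanly. There is nothing to add.
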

Now using \cref{claim:distribution-queries-deterministice-alg} we can write,
\begin{align*}
    \Pr_{\v}\left[ \sum_{\ell=0}^q \left\| v^{(\ell)} - \Y_{\ell}^* \tilde{f}^{(q)} \right\|_2^2 = 0 \right] &= \Pr_{\v}\left[ \Q^r\cdot \v = \Q^r \Y^* \tilde{f}^{(q)} \right]~~~~~~~~~~~~~~~~~\text{(Since $\Q^r$
is orthonormal)}\\
    &\le \Pr_{\v}\left[ \left[\Q^r \v \right]({\beta_{q,d}}) = \left[ \Q^r \Y^* \tilde{f}^{(q)} \right]({\beta_{q,d}}) \right]\\
    &= \E \left[ \Pr_{\v}\left[ \left.  \left[\Q^r \v \right]({\beta_{q,d}}) = \left[ \Q^r \Y^* \tilde{f}^{(q)} \right]({\beta_{q,d}}) \right|  f(\sigma_1), \ldots, f(\sigma_r) \right] \right],
\end{align*}
where the expectation in the last line is taken over the randomness of $f(\sigma_1), \ldots, f(\sigma_r)$.
Now note that conditioned on $f(\sigma_1), \ldots, f(\sigma_r)$, the quantity $\left[ \Q^r \Y^* \tilde{f}^{(q)} \right]({\beta_{q,d}})$ is a fixed vale because the algorithm determines $\tilde{f}^{(q)}$ given the knowledge of the queries $f(\sigma_1), \ldots, f(\sigma_r)$. Furthermore, by \cref{claim:distribution-queries-deterministice-alg}, $[\Q^r\cdot \v]({\beta_{q,d}})$ is a random variable distributed as $\mathcal{N}(0,1)$, conditioned on $f(\sigma_1), \ldots, f(\sigma_r)$. This implies that,
\[ \Pr\left[ \left.  \left[\Q^r\cdot \v\right]({\beta_{q,d}}) = \left[ \Q^r \Y^* \tilde{f}^{(q)} \right]({\beta_{q,d}}) \right|  f(\sigma_1), \ldots, f(\sigma_r) \right] = 0. \]
Thus,
\[ \Pr\left[ \sum_{\ell=0}^q \left\| v^{(\ell)} - \Y_{\ell}^* \tilde{f}^{(q)} \right\|_2^2 = 0 \right] = \E_{f(\sigma_1), \ldots, f(\sigma_r)} [ 0 ] = 0. \]
However, we have assumed that this algorithm solves \cref{interpolation-problem} with probability at least $1/10$, and hence,
by \cref{lem:problem1-ashardas-finding-v}, $\Pr\big[ \sum_{\ell=0}^q \| v^{(\ell)} - \Y_{\ell}^* \tilde{f}^{(q)} \|_2^2 = 0 \big] \ge 1/10$. This is a contradiction, yielding \cref{thm:lower-bound}.

\begin{figure}[t]
    \centering
	\subfigure[$d=3$]{\includegraphics[width=0.48\linewidth]{./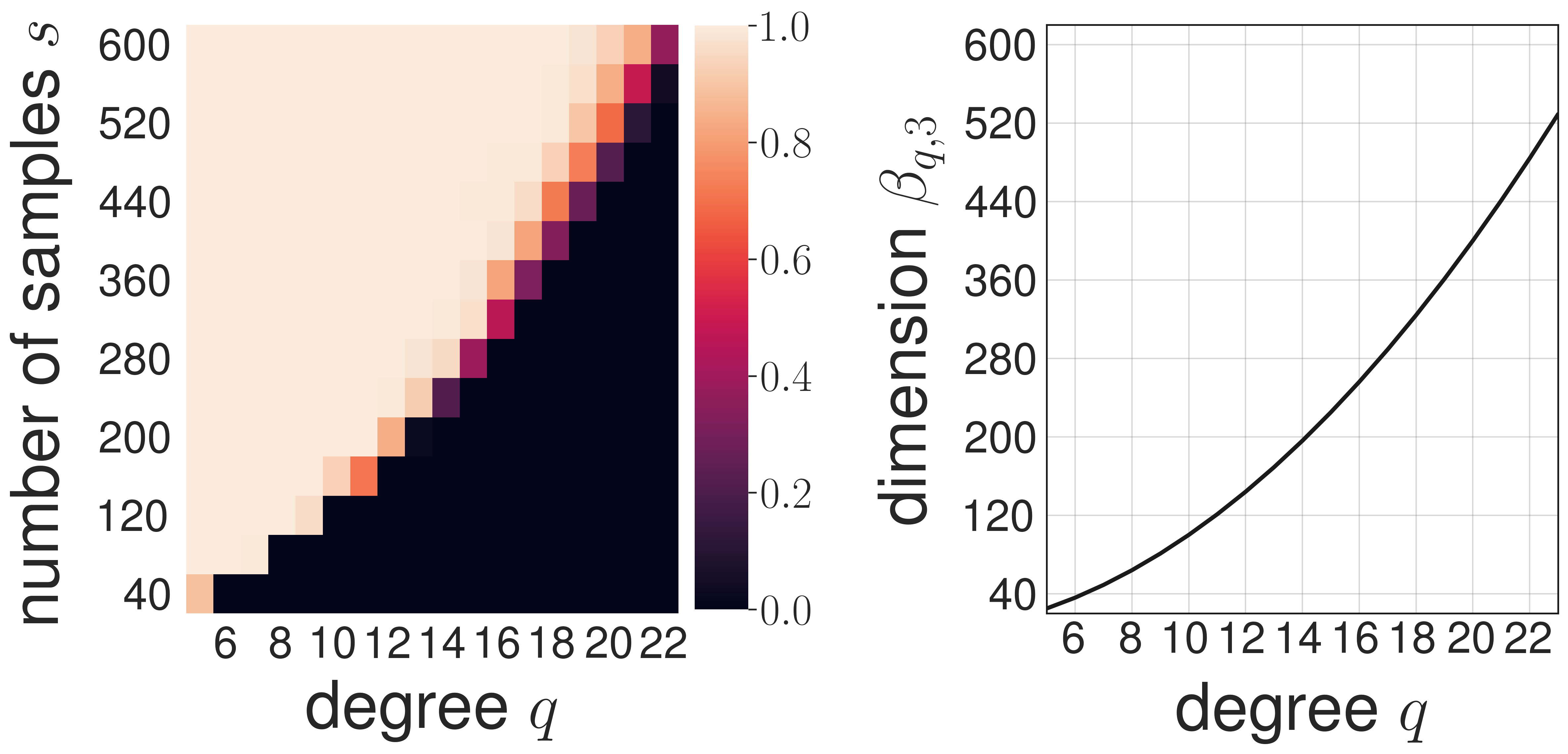}\label{fig:exp_d3}}
	\hspace{0.05in}
	\subfigure[$d=4$]{\includegraphics[width=0.48\linewidth]{./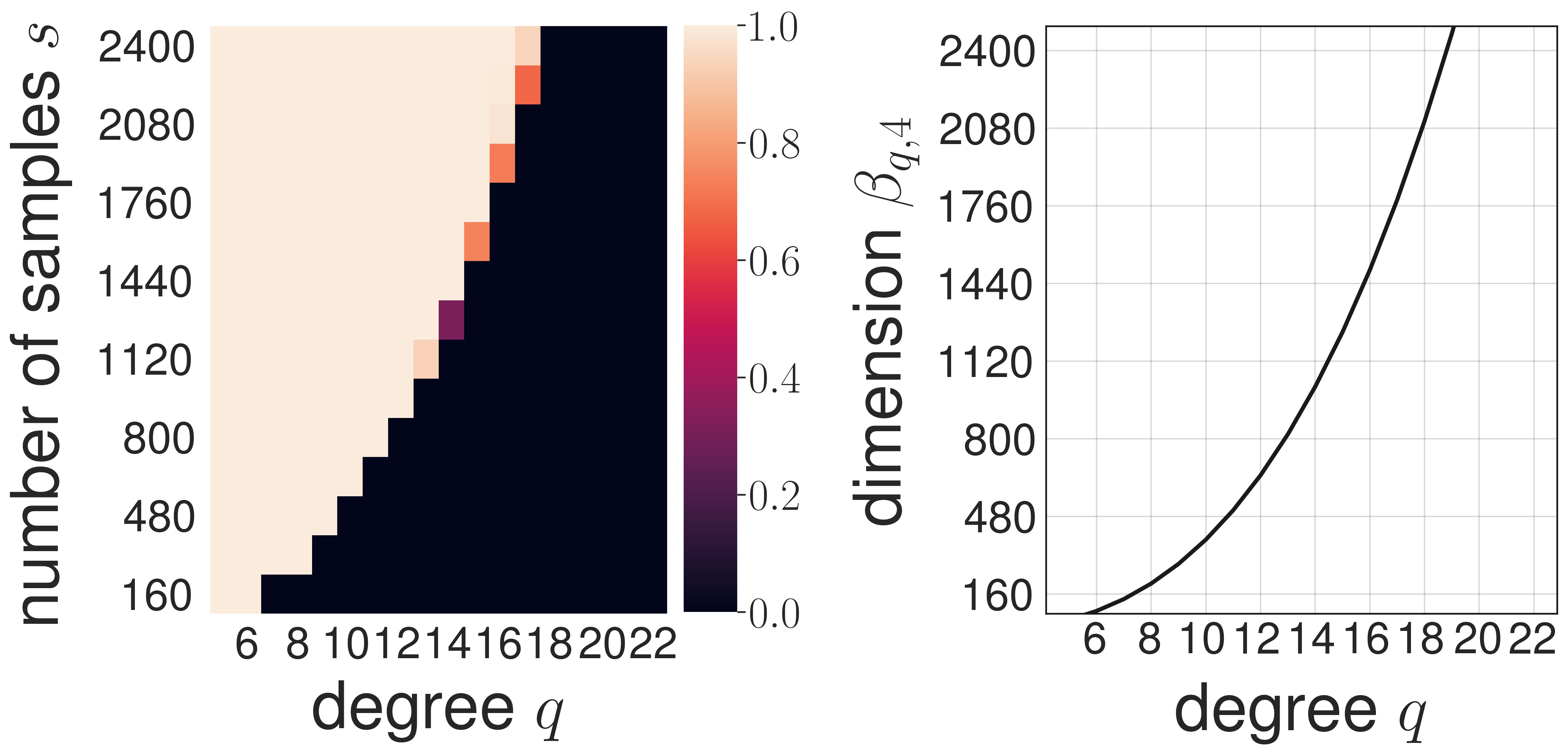}\label{fig:exp_d4}}
	\vspace{-0.1in}
	\caption{(Left) Empirical success probabilities of \cref{alg:leverage-sample-regression} varying the number of samples $s$ and the degree of spherical harmonic expansion $q$. (Right) The dimension $\beta_{q,d}$ of the Hilbert space $\Hc^{(q)}(\SS^{d-1})$ as a function of $q$ when (a) $d=3$ and (b) $d=4$, respectively.} \label{fig:exp}
\end{figure}


\section{Numerical Performance}

We conduct experiments for verifying numerical performance of our algorithm. Specifically, for a fixed $q$, we generate a random function $f(\sigma) = \sum_{\ell=0}^q c_\ell P_{d}^\ell(\inner{\sigma, v})$ where $v \sim \mathcal{U} (\SS^{d-1})$ and $c_\ell$'s are i.i.d. samples from $\mathcal{N}(0,1)$. 
Then, $f$ is recovered by running \cref{alg:leverage-sample-regression} with $s$ random evaluations of $f$ on $\SS^{d-1}$. 
Note that $\|\Kc_d^{(q)} f - f\|_{\SS^{d-1}}=0$ since $f \in \Hc^{(q)}(\SS^{d-1})$, thus, as shown in~\cref{thm:discretizing-regression-leverage}, \cref{alg:leverage-sample-regression} can recover $f$ ``exactly'' using $s=\bigo(\beta_{q,d}\log \beta_{q,d})$ evaluations, where $\beta_{q,d}$ is the dimension of the Hilbert space $\Hc^{(q)}(\SS^{d-1})$.

We predict $f$'s value on a random test set on $\SS^{d-1}$ and consider the algorithm fails if the testing error is greater than $10^{-12}$. We count the number of failures among $100$ independent random trials with different choices of $d\in\{3,4\}$, $q\in \{5,\dots,22\}$, and $s\in\{40,\dots,2400\}$. The empirical success probabilities for $d=3$ and $4$ are reported in \cref{fig:exp_d3} and \cref{fig:exp_d4}, respectively.

\cref{fig:exp} illustrates that the success probabilities of our algorithm sharply transition to $1$ as soon as the number of samples approaches $s \approx \beta_{q,d}$ for a wide range of $q$ and both $d=3,4$.
These experimental results complement our \cref{thm:discretizing-regression-leverage} along with the lower bound analysis in \cref{sec:lower-bound} and verify the empirical performance of our algorithm.

\section*{Acknowledgements}

Haim Avron was partially supported by the Israel Science Foundation (grant no. 1272/17) and by the US-Israel Binational Science Foundation (grant no. 2017698). 
Amir Zandieh was supported by the Swiss NSF grant No. P2ELP2\textunderscore 195140. Insu Han was supported by TATA DATA Analysis (grant no. 105676).

\bibliography{references.bib}
\bibliographystyle{alpha}

\clearpage
\appendix

\section{Properties of Gegenbauer Polynomials and Spherical Harmonics}\label{appndx-preliminaries}
In this section we prove the basic properties of the Gegenbauer Polynomials as well as the Spherical Harmonics and establish the connection between the two.
We start by the direct sum decomposition of the Hilbert space $L^2(\SS^{d-1})$ in terms of the spherical harmonics,

\directsumdecompose*
\begin{proof}
This is in fact a standard result. For example, see \cite{lang2012real} for a proof.

\end{proof}

Now we show that the Gegenbauer polynomials and spherical harmonics are related through the so called addition theorem,

\additiveformula*
\begin{proof}
The result can be proven analytically, using the properties of the Poisson kernel in the unit ball. This is classic and the proof can be found in \citep[Theorem 2.9]{atkinson2012spherical}.

\end{proof}

Next we show that the Gegenbauer kernels can project any function into the space of their corresponding spherical harmonics,

\projectionltofunctionsintohilberpspacesphericharmonic*
\begin{proof}
This is a classic textbook result, see \cite{morimoto1998analytic}.

\end{proof}

Now we prove that the Gegenbauer kernels satisfy the reproducing property for the Hilbert space $\Hc_\ell(\SS^{d-1})$.

\reproducingpropertygegenbauerkernels*
\begin{proof}
This result follows directly from the Funk–Hecke formula (See~\cite{atkinson2012spherical}).
However, we provide another proof here. First note that for every $x \in \SS^{d-1}$ the function $P_{d}^{\ell}\left( \langle x , \cdot \rangle \right) \in \Hc_{\ell}\left( \SS^{d-1} \right)$. Therefore the first claim follow by applying \cref{lem:decompose-Gegenbauer} on function $f(\sigma) = P_{d}^{\ell}\left( \langle x , \sigma \rangle \right)$ which also satisfies $f_\ell = f$.
On the other hand, $P_{d}^{\ell'}\left( \langle y , \cdot \rangle \right) \in \Hc_{\ell'}\left( \SS^{d-1} \right)$ for every $y \in \SS^{d-1}$. Thus, for $\ell' \neq \ell$, using the fact that spherical harmonics are orthogonal spaces of functions, $P_{d}^{\ell'}\left( \langle y , \cdot \rangle \right) \perp \Hc_{\ell}\left( \SS^{d-1} \right)$, which gives the second claim. 

\end{proof}

Next we prove that the kernel operator defined in \cref{eq:kernel-operator-def} is in fact a projection operator,

\kerneloperatorisprojectionoperator*
\begin{proof}
For every $f \in L^2\left( \SS^{d-1} \right)$ and every $\sigma \in \SS^{d-1}$, using \cref{eq:kernel-operator-def} we have,
\begin{align*}
    \left[ \left(\Kc^{(q)}_{d}\right)^2 f\right](\sigma) &= \sum_{\ell' =0}^q \frac{\alpha_{\ell',d}}{|\SS^{d-1}|} \left< \Kc^{(q)}_{d} f, P_{d}^{\ell'}\left( \langle \sigma , \cdot \rangle \right) \right>_{\SS^{d-1}}\\ 
    &= \sum_{\ell=0}^q\sum_{\ell' = 0}^q \alpha_{\ell,d} \alpha_{\ell',d} \cdot \E_{w \sim \mathcal{U}(\SS^{d-1})} \left[ P_{d}^{\ell'}\left( \langle \sigma , w \rangle \right) \cdot \E_{\tau \sim \mathcal{U}(\SS^{d-1})}\left[ P_{d}^{\ell}\left( \langle \tau , w \rangle \right) f(\tau) \right] \right]\\
    &= \sum_{\ell=0}^q\sum_{\ell' = 0}^q \alpha_{\ell,d} \alpha_{\ell',d} \cdot \E_{\tau \sim \mathcal{U}(\SS^{d-1})}\left[ f(\tau) \cdot \E_{w \sim \mathcal{U}(\SS^{d-1})} \left[
    P_{d}^{\ell'}\left( \langle \sigma , w \rangle \right)  P_{d}^{\ell}\left( \langle \tau , w \rangle \right) \right] \right]\\
    &= \sum_{\ell=0}^q \alpha_{\ell,d} \cdot \E_{\tau \sim \mathcal{U}(\SS^{d-1})}\left[ f(\tau) \cdot
    P_{d}^{\ell}\left( \langle \sigma , \tau \rangle \right)  \right]\\ 
    &= \left[ \Kc^{(q)}_{d} f\right](\sigma),
\end{align*}
where the fourth line above follows from \cref{claim-gegen-kernel-properties}.
This proves the claim.

\end{proof}

\section{Reducing the Interpolation Problem to a Least-Squares Regression}\label{appendix-reduce-interpolation-approximate-LS}
In this section we show that our spherical harmonic interpolation problem, i.e., \cref{interpolation-problem}, can be solved by approximately solving a least-squares problem as claimed in \cref{claim-interpolation-via-least-squares}.
We start by showing that for any function $f \in L^2(\SS^{d-1})$, $\Kc^{(q)}_{d} f$ gives its low-degree component. More precisely, let $f = \sum_{\ell=0}^\infty f_\ell$ the the decomposition of $f$ over the Hilbert sum $\bigoplus_{\ell=0}^{\infty} \Hc_{\ell}\left( \SS^{d-1} \right)$ as per \cref{lem:spherical-harmonic-direct-sum-decompose}. Now if we let $\Kc^{(q)}_{d}$ be the kernel operator from \cref{eq:kernel-operator-def} and if $\left\{ y^\ell_1, y^\ell_2, \ldots, y^\ell_{\alpha_{\ell,d}} \right\}$ is an orthonormal basis for $\Hc_{\ell}\left( \SS^{d-1} \right)$, then by \cref{lem:additive-formula} we have,
\begin{align*}
    \left[ \Kc^{(q)}_{d} f \right](\sigma) &= \sum_{\ell =0}^q \alpha_{\ell,d} \cdot \E_{w \sim \mathcal{U} (\SS^{d-1}) } \left[ f(w) P_{d}^{\ell}\left( \langle \sigma , w \rangle \right) \right] \\
    &= \sum_{\ell =0}^q \left| \SS^{d-1} \right| \cdot \E_{w \sim \mathcal{U} (\SS^{d-1}) } \left[ f(w) \cdot \sum_{j=1}^{\alpha_{\ell,d}} y^\ell_j(\sigma) \cdot y^\ell_j(w) \right]\\
    &= \sum_{\ell =0}^q \sum_{j=1}^{\alpha_{\ell,d}} y^\ell_j(\sigma) \cdot \left| \SS^{d-1} \right| \cdot \E_{w \sim \mathcal{U} (\SS^{d-1}) } \left[ f(w) \cdot y^\ell_j(w) \right]\\
    &= \sum_{\ell =0}^q \sum_{j=1}^{\alpha_{\ell,d}}  \langle f , y^\ell_j(w) \rangle_{\SS^{d-1}} \cdot y^\ell_j(\sigma) \\
    &= \sum_{\ell =0}^q f_\ell(\sigma) = f^{(q)}(\sigma),
\end{align*}
where the the second line above follows from \cref{lem:additive-formula}, the fourth line follows from \cref{eq:inner-prod-def}, and the last line follows from \cref{lem:spherical-harmonic-direct-sum-decompose}. This proves that the low-degree component $f^{(q)} = \Kc^{(q)}_{d} f$.

\approximateregressionsolvesinterpolation*
\begin{proof}
First, note that $g^* = f$ is an optimal solution to the least-squares problem in \cref{eq:regression-problem}. Thus we have,
\[ \min_{g \in L^2\left( \SS^{d-1} \right)} \left\| \Kc^{(q)}_{d} g - f \right\|_{\SS^{d-1}}^2 = \left\| \Kc^{(q)}_{d} f - f \right\|_{\SS^{d-1}}^2 = \left\| f^{(q)} - f \right\|_{\SS^{d-1}}^2. \]
Next, we can write,
\begin{align*}
    \left\| \Kc^{(q)}_{d} \tilde{g} - f \right\|_{\SS^{d-1}}^2 &= \left\| \Kc^{(q)}_{d} \tilde{g} - \Kc^{(q)}_{d}f + \left( \Kc^{(q)}_{d}f - f \right) \right\|_{\SS^{d-1}}^2\\
    &= \left\| \Kc^{(q)}_{d} (\tilde{g} - f) + \left( \Kc^{(q)}_{d}f - f \right) \right\|_{\SS^{d-1}}^2\\
    &= \left\| \Kc^{(q)}_{d} (\tilde{g} - f)\right\|_{\SS^{d-1}}^2 + \left\| \Kc^{(q)}_{d}f - f \right\|_{\SS^{d-1}}^2\\
    &= \left\| \Kc^{(q)}_{d} \tilde{g} - f^{(q)} \right\|_{\SS^{d-1}}^2 + \left\| f^{(q)} - f  \right\|_{\SS^{d-1}}^2,
\end{align*}
where the third line follows from the Pythagorean theorem because $\Kc^{(q)}_{d} (\tilde{g} - f) \in \Hc^{(q)}\left( \SS^{d-1} \right)$ while $ \Kc^{(q)}_{d}f - f = -\sum_{\ell>q} f_\ell$, thus $\left( \Kc^{(q)}_{d}f - f \right) \perp  \Hc^{(q)}\left( \SS^{d-1} \right) $. Combining the two equalities above with inequality $\left\| \Kc^{(q)}_{d} \tilde{g} - f \right\|_{\SS^{d-1}}^2 \le C \cdot \min_{g \in L^2\left( \SS^{d-1} \right)} \left\| \Kc^{(q)}_{d} g - f \right\|_{\SS^{d-1}}^2$ was given in the statement of the claim proves the \cref{claim-interpolation-via-least-squares}.

\end{proof}

\section{Approximate Regression via Leverage Score Sampling}\label{apndx-proof-leverage-score-regression}
In this section we ultimately prove our main result of \cref{thm:discretizing-regression-leverage}. We start by proving useful properties of the leverage function given in \cref{def:leverage-function}.
First, we show the fact that the ridge leverage function can be characterized in terms of a least-squares minimization problem, which is crucial for computing the leverage scores distribution. This fact was previously exploited in \cite{avron2017random} and \cite{avron2019universal} in the context of Fourier operators.

\mincharacterleveragefunction*
We remark that this lemma is in fact an adaptation and generalization of Theorem~5 of \cite{avron2019universal}. We prove this lemma here for the sake of completeness.
\begin{proof}
First we show that the right hand side of \cref{eq:lev-score-min-char} is never smaller than the leverage function in \cref{def:leverage-function}. Let $g_w^* \in L^2(\SS^{d-1})$ be the optimal solution of \cref{eq:lev-score-min-char} for any $w \in \SS^{d-1}$. Note that the optimal solution satisfies $\Kc^{(q)}_{d} g_w^* = \phi_w$. Thus, for any function $f \in L^2(\SS^{d-1})$, using \cref{eq:kernel-operator-def}, we can write
\begin{align*}
    \left| \left[ \Kc^{(q)}_{d} f\right](w) \right|^2 &= \left| \sum_{\ell=0}^q \alpha_{\ell,d} \cdot \E_{\sigma \sim \mathcal{U}(\SS^{d-1})} \left[ P_{d}^{\ell}\left( \langle \sigma , w \rangle \right) \cdot f(\sigma) \right] \right|^2\\
    &= \left| \langle \phi_w, f \rangle_{\SS^{d-1}} \right|^2 = \left| \left< \Kc^{(q)}_{d} g_w^*, f \right>_{\SS^{d-1}} \right|^2\\
    &= \left| \left< g_w^*, \Kc^{(q)}_{d} f \right>_{\SS^{d-1}} \right|^2 ~~~~~~~~~~~~~~~~~~~~~~~~~~~~~~~~~~~~~~~~~~~~\text{(because $\Kc^{(q)}_{d}$ is self-adjoint)}\\
    &\le \|g_w^*\|_{\SS^{d-1}}^2 \cdot \left\| \Kc^{(q)}_{d} f \right\|_{\SS^{d-1}}^2 ~~~~~~~~~~~~~~~~~~~~~~~~~~~~~~\text{(by Cauchy–Schwarz inequality)}
\end{align*}
Therefore, for any $f \in L^2(\SS^{d-1})$ with $\left\| \Kc^{(q)}_{d}f \right\|_{\SS^{d-1}}>0$, we have
\begin{equation}\label{eq:max-char-smaller-min-char}
\frac{\left| \left[ \Kc^{(q)}_{d} f\right](w) \right|^2}{\left\| \Kc^{(q)}_{d} f \right\|_{\SS^{d-1}}^2} \le  \|g_w^*\|_{\SS^{d-1}}^2.     
\end{equation}

We conclude the proof by showing that the maximum value is attained. First, we show that the optimal solution $g_w^*$ of \cref{eq:lev-score-min-char} satisfies the property that $\Kc^{(q)}_{d} g_w^* = g_w^*$. Suppose for the sake of contradiction that $\Kc^{(q)}_{d} g_w^* \neq g_w^*$. In this case, \cref{claim:k-square-eqaul-k} implies that,
\[ \Kc^{(q)}_{d} \left(\Kc^{(q)}_{d} g_w^* - g_w^*\right) = \left(\Kc^{(q)}_{d}\right)^2 g_w^* - \Kc^{(q)}_{d} g_w^* = \Kc^{(q)}_{d} g_w^* - \Kc^{(q)}_{d} g_w^* = 0. \]
Thus, the function $g = \Kc^{(q)}_{d} g_w^*$ satisfies the constraint of the minimization problem in \cref{eq:lev-score-min-char}. Now, using the above and the fact that $\Kc^{(q)}_{d}$ is self-adjoint we can write,
\[ \left< \Kc^{(q)}_{d} g_w^*,  \Kc^{(q)}_{d} g_w^* - g_w^* \right>_{\SS^{d-1}} = \left< g_w^*,  \Kc^{(q)}_{d} \left(\Kc^{(q)}_{d} g_w^* - g_w^* \right) \right>_{\SS^{d-1}} = 0. \]
This shows that $\Kc^{(q)}_{d} g_w^* \perp \left(\Kc^{(q)}_{d} g_w^* - g_w^* \right)$, hence by Pythagorean theorem we have,
\[ \|g_w^*\|_{\SS^{d-1}}^2 = \left\|\Kc^{(q)}_{d} g_w^*\right\|_{\SS^{d-1}}^2 + \left\|\Kc^{(q)}_{d} g_w^* - g_w^*\right\|_{\SS^{d-1}}^2 > \left\|\Kc^{(q)}_{d} g_w^*\right\|_{\SS^{d-1}}^2 = \left\|g\right\|_{\SS^{d-1}}^2, \]
which is in contrast with the assumption that $g_w^*$ is the optimal solution of \cref{eq:lev-score-min-char}. Therefore, our claim that $\Kc^{(\ell)}_{d} g_w^* = g_w^*$ holds.

Now, we show that for $f = g_w^*$, the maximum value in inequality \cref{eq:max-char-smaller-min-char} is attained. For any $w \in \SS^{d-1}$ we have the following
\[ \left[ \Kc^{(q)}_{d} f\right](w) =  \left< \Kc^{(q)}_{d} g_w^*, f \right>_{\SS^{d-1}} = \left< g_w^*, g_w^* \right>_{\SS^{d-1}} = \|g_w^*\|_{\SS^{d-1}}^2. \]
On the other hand we have $\left\| \Kc^{(q)}_{d} f\right\|_{\SS^{d-1}}^2 = \|g_w^*\|_{\SS^{d-1}}^2$.
Thus, $ \left\| \Kc^{(q)}_{d} f\right\|_{\SS^{d-1}}^{-2} \cdot \left| \left[ \Kc^{(q)}_{d} f\right](w) \right|^2 = \|g_w^*\|_{\SS^{d-1}}^2 $ which implies that $\tau_q(w) = \|g_w^*\|_{\SS^{d-1}}^2$ and thus proves the lemma.

\end{proof}

To prove our \cref{thm:discretizing-regression-leverage}, we need to use some results about concentration of random operators. In particular we use Lemma~37 from \cite{avron2019universal}, which is restated bellow,

\begin{lemma}[Lemma 37 of \cite{avron2019universal}]\label{lem:matrix-chernoff-operators}
Suppose that $\mathcal{H}$ is a separable Hilbert space, and that $\mathcal{B}$ is a fixed self-adjoint Hilbert-Schmidt operator on $\mathcal{H}$. Let $\Rc$ be a self-adjoint Hilbert-Schmidt random operator that satisfies
\[ \E [\Rc] = \mathcal{B} , \text{ and } \| \Rc \|_{op} \le L \]
Let $\mathcal{M}$ be another self-adjoint trace-class operator such that $\E [\Rc^2] \preceq \mathcal{M}$. Form the operator sampling estimator
\[ \Bar{\Rc}_n := \frac{1}{n} \sum_{k=1}^n \Rc_k \]
where each $\Rc_k$ is an independent copy of $\Rc$. Then, for any $t > \sqrt{ \| \mathcal{M} \|_{op} / n} + 2L/3n$,
\[ \Pr[ \| \Bar{\Rc}_n - \mathcal{B} \|_{op} > t ] \le \frac{8 \cdot \trace(\mathcal{M})}{\| \mathcal{M} \|_{op}} \cdot \exp\left( \frac{-n t^2 / 2}{ \| \mathcal{M} \|_{op} + 2Lt/3 } \right). \]
\end{lemma}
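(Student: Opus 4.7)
The plan is to derive this inequality as a Hilbert-Schmidt operator analogue of Tropp's intrinsic-dimension matrix Bernstein inequality. First I would center the random operators: let $\Z_k := \Rc_k - \mathcal{B}$, so $\E[\Z_k] = 0$ and $\|\Z_k\|_{op} \le \|\Rc_k\|_{op} + \|\mathcal{B}\|_{op} \le 2L$, and the deviation we wish to control is $\bar{\Z}_n := \bar{\Rc}_n - \mathcal{B} = \frac{1}{n}\sum_k \Z_k$. From $\E[\Z_k^2] = \E[\Rc_k^2] - \mathcal{B}^2 \preceq \E[\Rc_k^2] \preceq \mathcal{M}$ the total second moment $\sum_k \E[\Z_k^2]/n^2 \preceq \mathcal{M}/n$, which is the ``variance'' parameter that must appear in the Bernstein denominator.

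Next I would apply the Laplace-transform (Ahlswede-Winter-Tropp) method. For any $\theta > 0$, Markov's inequality on $\trace$-exponentials gives $\Pr[\lambda_{\max}(\bar{\Z}_n) \ge t] \le e^{-\theta t}\,\E\,\trace\, e^{\theta \bar{\Z}_n}$. Combining Lieb's concavity with Jensen yields $\E\,\trace\, e^{\theta \bar{\Z}_n} \le \trace\exp\!\left(\sum_k \log \E\, e^{\theta \Z_k / n}\right)$. Using the scalar bound $\log \E\, e^{\theta \Z_k/n} \preceq \frac{g(\theta L/n)}{L^2}\E[\Z_k^2]$ with $g(x) = e^x - x - 1$, together with $\sum_k \E[\Z_k^2] \preceq n\mathcal{M}$, this collapses the cumulant-generating-function bound to $\trace \exp(\psi(\theta)\mathcal{M})$ for an explicit $\psi(\theta)$. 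An identical argument applied to $-\bar{\Z}_n$ handles the lower tail, and a union bound gives a two-sided inequality on $\|\bar{\Z}_n\|_{op}$.

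The main technical obstacle, and the step that produces the characteristic intrinsic-dimension prefactor $8\trace(\mathcal{M})/\|\mathcal{M}\|_{op}$ rather than an ambient-dimension factor, is controlling $\trace\exp(\psi(\theta)\mathcal{M})$ via Tropp's ``intrinsic dimension lemma'': for any convex $h:[0,\infty)\to[0,\infty)$ with $h(0)=0$ and positive trace-class $\mathcal{M}$, $\trace(h(\mathcal{M})) \le \frac{\trace \mathcal{M}}{\|\mathcal{M}\|_{op}}\cdot h(\|\mathcal{M}\|_{op})$. Applying this to the appropriate shifted exponential, then optimizing $\theta$ to balance the linear and quadratic regimes, produces the Bernstein denominator $\|\mathcal{M}\|_{op} + 2Lt/3$; the condition $t > \sqrt{\|\mathcal{M}\|_{op}/n} + 2L/(3n)$ is exactly what is needed to keep the optimizing $\theta$ positive and the prefactor absorbable into the constant $8$. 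Finally, because $\mathcal{H}$ is separable and $\mathcal{M}$ is trace-class, the matrix statement transfers to Hilbert-Schmidt operators by a standard truncation: apply the finite-dimensional bound to $\Pi_N \Z_k \Pi_N$ for the orthogonal projection $\Pi_N$ onto the first $N$ basis vectors, and pass $N \to \infty$ using dominated convergence of $\trace(\mathcal{M}\Pi_N) \uparrow \trace(\mathcal{M})$, which is finite by hypothesis.
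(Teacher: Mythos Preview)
The paper does not give its own proof of this lemma: it is simply quoted verbatim as ``Lemma~37 of \cite{avron2019universal}'' and then invoked as a black box in the proof of \cref{lem:K-operator-approximation-leverage-score-sampling}. So there is no in-paper argument to compare your proposal against.

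That said, your sketch is the standard route to such an operator Bernstein inequality and is essentially how the cited reference (and, behind it, Minsker and Tropp) prove it: center, bound the mgf via Lieb/Tropp subadditivity, replace the ambient dimension by the intrinsic dimension $\trace(\mathcal{M})/\|\mathcal{M}\|_{op}$ using Tropp's intrinsic-dimension lemma, optimize in $\theta$, and pass from finite dimensions to a separable Hilbert space by projecting onto finite-dimensional subspaces and taking limits (trace-class $\mathcal{M}$ makes the limit legitimate). One small point to watch if you ever write this out in full: after centering you only get $\|\Z_k\|_{op} \le 2L$, which naively yields $4Lt/3$ rather than $2Lt/3$ in the Bernstein denominator; the stated constant comes from either assuming the bound on the \emph{centered} operator or absorbing the discrepancy into the leading constant~$8$, so be explicit about which convention you adopt.
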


Our approach is to apply \cref{lem:matrix-chernoff-operators} to show that the operator $\Kc_d^{(q)}$ can be well approximated by $\P \P^*$, where the quasi-matrix $\P$ is defined in \cref{thm:discretizing-regression-leverage}. 
In order to prove this formally, we need to define the notion of positive definiteness for self-adjoint operators.
We call self-adjoint $A : L^2(\SS^{d-1}) \to L^2(\SS^{d-1})$ positive semidefinite (or simply positive) and write $A \succeq 0$ if $\langle x, Ax \rangle_{\SS^{d-1}} \ge 0$ for all $x \in L^2(\SS^{d-1})$. 
The notation for $A \preceq B$ and $A \succeq B$ follow in the standard way.
Now with the notations in place we can prove the following lemma,

\begin{lemma}[Approximating $\Kc_d^{(q)}$ via Leverage Score Sampling]\label{lem:K-operator-approximation-leverage-score-sampling}
For any $\delta >0$ and $\epsilon \in (0,1/2)$, let $s = \frac{8\beta_{q,d}}{3\epsilon^2} \log \frac{8\beta_{q,d}}{\delta}$, for sufficiently large fixed constant $c$, and let $w_1, w_2, \ldots , w_s$ be i.i.d. uniform samples from $\SS^{d-1}$. Let $\P: \RR^{s} \to L^2(\SS^{d-1})$ be the quasi-matrix defined as follows, for every $v \in \RR^{d}$ and $\sigma \in \SS^{d-1}$:
\[ [\P \cdot v](\sigma) := \sum_{\ell=0}^q \frac{\alpha_{\ell,d}}{\sqrt{s \cdot |\SS^{d-1}|}} \cdot \sum_{j=1}^{s} v_j \cdot P_{d}^{\ell}\left( \langle w_j , \sigma \rangle \right). \]
Also let $\P^*$ be the adjoint of $\P$. Then with probability at least $1 - \delta$,
\[ (1 - \varepsilon) \cdot \Kc_d^{(q)} \preceq \P \P^* \preceq (1 + \varepsilon) \cdot \Kc_d^{(q)}. \]
\end{lemma}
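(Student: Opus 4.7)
The plan is to recast the sampled operator $\P\P^*$ as a Monte-Carlo average of random rank-one operators and then invoke the operator Chernoff bound of \cref{lem:matrix-chernoff-operators}. For a single uniform sample $w \sim \mathcal{U}(\SS^{d-1})$, I would define the random operator $\Rc$ on $L^2(\SS^{d-1})$ by
\[
\Rc f \;=\; |\SS^{d-1}|\cdot \phi_w \cdot \langle \phi_w, f\rangle_{\SS^{d-1}}, \qquad \phi_w(\sigma):=\sum_{\ell=0}^q \frac{\alpha_{\ell,d}}{|\SS^{d-1}|} P_{d}^{\ell}(\langle \sigma, w\rangle),
\]
which is the same function $\phi_w$ used in \cref{lem:min-char-leverage-function,lem:leverage-function-value}. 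Unpacking the definition of $\P$ in the statement shows that $[\P v](\sigma) = \sqrt{|\SS^{d-1}|/s}\cdot \sum_j v_j \phi_{w_j}(\sigma)$, so that $\P\P^* = \tfrac{1}{s}\sum_{j=1}^s |\SS^{d-1}|\cdot \phi_{w_j}\langle \phi_{w_j}, \cdot\rangle_{\SS^{d-1}} = \tfrac{1}{s}\sum_{j=1}^s \Rc_j$ where $\Rc_j$ are i.i.d.\ copies of $\Rc$. This matches the estimator $\bar{\Rc}_s$ appearing in the hypothesis of \cref{lem:matrix-chernoff-operators}.

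The next step is to verify the mean operator $\mathcal{B}$, the almost-sure bound $L$, and the second-moment dominator $\mathcal{M}$. The equality $\E[\Rc] = \Kc_d^{(q)}$ follows by expanding the outer product $\phi_w\otimes \phi_w$ into a double sum over degrees $(\ell,\ell')$ and applying the orthogonality part of \cref{claim-gegen-kernel-properties} to annihilate the off-diagonal pairs; this is the same computation that appears in \cref{eq:K-times-phi}. Since $\Rc$ is rank one, $\|\Rc\|_{op} = |\SS^{d-1}|\cdot \|\phi_w\|_{\SS^{d-1}}^2$, and the calculation already carried out in the proof of \cref{lem:leverage-function-value} gives $\|\phi_w\|_{\SS^{d-1}}^2 = \beta_{q,d}/|\SS^{d-1}|$, so $L := \beta_{q,d}$ uniformly in $w$. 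Finally, using that $\Rc$ is rank one with factor $\phi_w$, direct substitution yields $\Rc^2 = |\SS^{d-1}|\cdot \|\phi_w\|_{\SS^{d-1}}^2 \cdot \Rc = \beta_{q,d}\cdot \Rc$, hence $\E[\Rc^2] = \beta_{q,d}\cdot \Kc_d^{(q)} =: \mathcal{M}$. Because $\Kc_d^{(q)}$ is an orthogonal projection by \cref{claim:k-square-eqaul-k}, one has $\|\mathcal{M}\|_{op} = \beta_{q,d}$, while \cref{eq:operator-K-trace-class} gives $\trace(\mathcal{M}) = \beta_{q,d}^2$.

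Substituting these quantities into \cref{lem:matrix-chernoff-operators} with $t = \epsilon$, the tail probability becomes at most $8\beta_{q,d}\cdot \exp\!\left(-\tfrac{s\epsilon^2}{2\beta_{q,d}(1+2\epsilon/3)}\right)$. For $\epsilon\in(0,1/2)$ and the stated $s = \tfrac{8\beta_{q,d}}{3\epsilon^2}\log(8\beta_{q,d}/\delta)$, this is bounded by $\delta$ and also the precondition $\epsilon > \sqrt{\|\mathcal{M}\|_{op}/s}+2L/(3s)$ is satisfied. Hence, with probability at least $1-\delta$, $\|\P\P^* - \Kc_d^{(q)}\|_{op} \le \epsilon$. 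To upgrade this operator-norm bound to the advertised spectral inequality, I would observe that $\phi_{w_j}\in \Hc^{(q)}(\SS^{d-1})$ for every $j$, so $\P\P^*$ vanishes on $\Hc^{(q)}(\SS^{d-1})^{\perp}$; so does $\Kc_d^{(q)}$, while on $\Hc^{(q)}(\SS^{d-1})$ itself $\Kc_d^{(q)}$ acts as the identity. The operator-norm bound therefore immediately yields $(1-\epsilon)\Kc_d^{(q)}\preceq \P\P^*\preceq (1+\epsilon)\Kc_d^{(q)}$.

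The main technical care lies in computing $\E[\Rc]$ and $\E[\Rc^2]$ at the operator level rather than on specific test functions, which requires justifying the interchange of the uniform expectation over $w$ with the inner product $\langle \phi_w,\cdot\rangle_{\SS^{d-1}}$ against an arbitrary $f\in L^2(\SS^{d-1})$, and then repeatedly invoking the reproducing identities of \cref{claim-gegen-kernel-properties}. Everything else is a mechanical plug-in to \cref{lem:matrix-chernoff-operators} with the constants worked out above.
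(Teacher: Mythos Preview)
Your proposal is correct and follows essentially the same route as the paper: define the rank-one operators $\Rc_j = |\SS^{d-1}|\,\phi_{w_j}\otimes\phi_{w_j}$, verify $\E[\Rc_j]=\Kc_d^{(q)}$, $\|\Rc_j\|_{op}=\beta_{q,d}$, and $\E[\Rc_j^2]=\beta_{q,d}\Kc_d^{(q)}$ via \cref{claim-gegen-kernel-properties}, then apply \cref{lem:matrix-chernoff-operators}. The only cosmetic difference is in the final upgrade from the operator-norm bound to the two-sided spectral inequality: the paper verifies $\Kc_d^{(q)}\cdot\P\P^*=\P\P^*$ explicitly, whereas you argue directly that each $\phi_{w_j}\in\Hc^{(q)}(\SS^{d-1})$ so $\P\P^*$ vanishes on $\Hc^{(q)}(\SS^{d-1})^{\perp}$---these are equivalent observations.
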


\begin{proof}
The proof is by invoking \cref{lem:matrix-chernoff-operators}. The reason we can invoke this lemma is because $\Kc_d^{(q)}$ is a self adjoint trace-class orthonormal projection operator, by \cref{claim:k-square-eqaul-k} and \cref{eq:operator-K-trace-class}, thus this operator is Hilbert-Schmidt. Furthermore, the Hilbert space that we care about is $L^2(\SS^{d-1})$ which is a separable space. 

Now notice that if we define the function $\phi_w \in L^2(\SS^{d-1})$ by $\phi_w(\sigma):= \sum_{\ell=0}^q \frac{\alpha_{\ell,d}}{|\SS^{d-1}|}  P_{d}^{\ell}\left( \langle \sigma , w \rangle \right)$ for $\sigma , w \in \SS^{d-1}$, then for any $v \in \RR^s$:
\[ \P \cdot v \equiv \sqrt{\frac{|\SS^{d-1}|}{s}} \cdot \sum_{j=1}^s  v_j \cdot \phi_{w_j} \]
Furthermore, given functions $f, g \in L^2(\SS^{d-1})$ we define the operator $(f \otimes g) : L^2(\SS^{d-1}) \to L^2(\SS^{d-1})$ by \[ (f \otimes g) h := \langle g , h \rangle_{\SS^{d-1}} \cdot f ~~~~~\text{ for any } h \in L^2(\SS^{d-1}). \]
Therefore, using this notation, if we let 
\[ \Rc_j := |\SS^{d-1}| \cdot (\phi_{w_j} \otimes \phi_{w_j}), \]
then we understand that,
\[ \P \P^* \equiv \frac{1}{s} \cdot \sum_{j=1}^s \Rc_j. \]
Note that $\Rc_j$ is a rank-one self adjoint operator, thus it is also Hilbert-Schmidt. Since samples $w_1,w_2, \ldots, w_s$ are drawn independently at random, $\Rc_1, \Rc_2, \ldots, \Rc_s$ are i.i.d. random operators with expectation,
\[ \E [\Rc_j] = \left| \SS^{d-1} \right| \cdot \E_{w_j \sim \mathcal{U}(\SS^{d-1})} [\phi_{w_j} \otimes \phi_{w_j}] = \Kc_d^{(q)}. \]
The reason for the second equality above is that for any function $f \in L^2(\SS^{d-1})$ and any $\sigma \in \SS^{d-1}$:
\begin{align*}
\left| \SS^{d-1} \right| \left[ \E [\phi_{w_j} \otimes \phi_{w_j}] f \right](\sigma) &= \left| \SS^{d-1} \right| \cdot \E_{w_j} \left[ [( \phi_{w_j} \otimes \phi_{w_j} ) \cdot f ](\sigma) \right]\\
&= \left| \SS^{d-1} \right| \cdot \E_{w_j} \left[ \langle \phi_{w_j}, f \rangle_{\SS^{d-1}} \cdot \phi_{w_j}(\sigma) \right]\\
&= \sum_{\ell=0}^q \sum_{\ell'=0}^q \alpha_{\ell,d} \alpha_{\ell',d} \cdot  \E_{w } \left[ \E_{\tau} \left[ P_{d}^{\ell}\left( \langle \sigma , w \rangle \right)  P_{d}^{\ell'}\left( \langle \tau , w \rangle \right)  f(\tau) \right] \right] \\
&= \sum_{\ell=0}^q \alpha_{\ell,d} \cdot \E_{\tau \sim \mathcal{U}(\SS^{d-1})} \left[ P_{d}^{\ell}\left( \langle \sigma , \tau \rangle \right) f(\tau) \right]\\
&= \left[ \Kc_d^{(q)} f \right](\sigma),
\end{align*}
where the fourth line above follows from \cref{claim-gegen-kernel-properties}.
Next, we bound the operator norm of $\Rc_j$. This random operator only takes values that are both positive semi-definite and rank one, so the operator norm of $\Rc_j$ is equal to the following
\begin{align*}
    \| \Rc_j \|_{op} &= \left\| \left| \SS^{d-1} \right| \cdot (\phi_{w_j} \otimes \phi_{w_j}) \right\|_{op}\\
    &= \left| \SS^{d-1} \right| \cdot \left\| \phi_{w_j} \right\|_{\SS^{d-1}}^2\\
    &= \beta_{q,d},
\end{align*}
where the last line follows from \cref{claim-gegen-kernel-properties} and definition of $\phi_{w_j}$ as well as the fact that $\beta_{q,d} = \sum_{\ell=0}^q \alpha_{\ell,d}$.
The final ingredient for applying \cref{lem:matrix-chernoff-operators} is to bound $\Rc_j^2$. We have,
\begin{align*}
    \Rc_j^2 &= \left| \SS^{d-1} \right|^2 \cdot (\phi_{w_j} \otimes \phi_{w_j}) \cdot (\phi_{w_j} \otimes \phi_{w_j})\\
    &= \left| \SS^{d-1} \right|^2 \cdot \left\| \phi_{w_j} \right\|_{\SS^{d-1}}^2 \cdot (\phi_{w_j} \otimes \phi_{w_j})\\
    &= \beta_{q,d} \cdot \left| \SS^{d-1} \right| \cdot  (\phi_{w_j} \otimes \phi_{w_j})\\
    &= \beta_{q,d} \cdot \Rc_j.
\end{align*}
Therefore,
\[ \E[\Rc_j^2] = \beta_{q,d} \cdot \Kc_d^{(q)} =: \mathcal{M}. \]
Now note that by \cref{eq:operator-K-trace-class}, we have $\trace(\mathcal{M}) = \beta_{q,d} \cdot \trace\left( \Kc_d^{(q)} \right) = \beta_{q,d}^2$. Also, by \cref{claim:k-square-eqaul-k}, $\Kc_d^{(q)}$ is an orthonormal projection operator, thus $\|\mathcal{M}\|_{op} = \beta_{q,d} \cdot \left\| \Kc_d^{(q)} \right\|_{op} = \beta_{q,d}$. Therefore, by \cref{lem:matrix-chernoff-operators} we have,
\begin{align*}
    \Pr\left[ \left\| \P \P^* - \Kc_d^{(q)} \right\|_{op} > \epsilon \right] &\le \frac{8 \cdot \trace(\mathcal{M})}{\| \mathcal{M} \|_{op}} \cdot \exp\left( \frac{-s \epsilon^2 / 2}{ \| \mathcal{M} \|_{op} + 2\beta_{q,d}\epsilon/3 } \right)\\
    &= \frac{8 \cdot \beta_{q,d}^2}{\beta_{q,d}} \cdot \exp\left( \frac{-s \epsilon^2 / 2}{ \beta_{q,d} + 2\beta_{q,d}\epsilon/3 } \right)\\
    &\le \delta.
\end{align*}

Now recall that $\Kc_d^{(q)}$ is an orthonormal projection matrix. We claim that the eigenspace of $\P \P^*$ is a subspace of the eigenspace of $\Kc_d^{(q)}$. To see why note that we can write,
\begin{align}
    \Kc_d^{(q)} \cdot \P \P^* &= \Kc_d^{(q)} \cdot \left( \frac{\left| \SS^{d-1} \right|}{s} \cdot \sum_{j=1}^s (\phi_{w_j} \otimes \phi_{w_j}) \right)\nonumber\\
    &= \frac{\left| \SS^{d-1} \right|}{s} \cdot \sum_{j=1}^s \Kc_d^{(q)} \cdot  (\phi_{w_j} \otimes \phi_{w_j})\nonumber\\
    &= \frac{\left| \SS^{d-1} \right|}{s} \cdot \sum_{j=1}^s \left(\left(\Kc_d^{(q)} \cdot  \phi_{w_j}\right) \otimes \phi_{w_j} \right)\nonumber\\
    &= \frac{\left| \SS^{d-1} \right|}{s} \cdot \sum_{j=1}^s \left( \phi_{w_j} \otimes \phi_{w_j} \right)\nonumber\\
    &= \P \P^*, \label{KPPstar-equal-PPstar}
\end{align}
where the fourth line above follows because for any $\sigma , w_j \in \SS^{d-1}$,
\begin{align*}
    \left[\Kc_d^{(q)} \cdot  \phi_{w_j}\right](\sigma) &=  \sum_{\ell=0}^q \alpha_{\ell,d} \cdot \E_{v \sim \mathcal{U}(\SS^{d-1})} \left[ P_{d}^{\ell}\left( \langle \sigma , v \rangle \right) \cdot \phi_{w_j}(v) \right] \\
    &= \sum_{\ell=0}^q \sum_{\ell'=0}^q \frac{\alpha_{\ell,d} \alpha_{\ell',d}}{|\SS^{d-1}|} \cdot \E_{v \sim \mathcal{U}(\SS^{d-1})} \left[ P_{d}^{\ell}\left( \langle \sigma , v \rangle \right) \cdot P_{d}^{\ell'}\left( \langle w_j , v \rangle \right) \right] \\
    &= \sum_{\ell=0}^q \frac{\alpha_{\ell,d}}{|\SS^{d-1}|} P_{d}^{\ell}\left( \langle \sigma , w_j \rangle \right)\\
    &= \phi_{w_j}(\sigma),
\end{align*}
where the third line above follows from \cref{claim-gegen-kernel-properties}. 
Therefore, now we have shown that $\Kc_d^{(q)} \cdot \P \P^* = \P \P^*$ and $\left\| \P \P^* - \Kc_d^{(q)} \right\|_{op} \le \epsilon$. Given the fact that $\Kc_d^{(q)}$ is a symmetric self-adjoint orthonormal projection and $\P\P^*$ is also symmetric and self-adjoint, this implies that,
\[ \Pr\left[ (1-\epsilon) \cdot \Kc_d^{(q)} \preceq \P\P^* \preceq (1+\epsilon) \cdot \Kc_d^{(q)} \right] \ge 1-\delta \]
which completes the proof.

\end{proof}

Now we are ready to prove \cref{thm:discretizing-regression-leverage}. We prove this theorem by showing that for all $g \in L^2(\SS^{d-1})$, leverage function sampling lets us approximate the value of the regression objective function in \cref{eq:regression-problem} when evaluated at $g$. We do this by showing that our sampling provides the so-called \emph{affine embedding guarantee}.

\approxRegLevScore*

\begin{proof}
Throughout the proof we use $f^{(q)} := \Kc^{(q)}_{d} f$ and $B^* := \left\| f - f^{(q)} \right\|_{\SS^{d-1}}^2$.
The proof is by reduction to affine embedding. Specifically, we prove that, with probability at least $1 - \delta$, simultaneously for all $g \in L^2(\SS^{d-1})$,
\begin{equation}\label{affine-embeddng}
   (1 - \epsilon/3) \cdot \left\| \Kc^{(q)}_{d} g - f \right\|_{\SS^{d-1}}^2 \le \left\| \P^* g - \f \right\|_{2}^2 + C \le (1 + \epsilon/3) \cdot \left\| \Kc^{(q)}_{d} g - f \right\|_{\SS^{d-1}}^2,
\end{equation}
where $C$ is some fixed value independent of $g$ that only depends on $\Kc_d^{(q)}$, $\P$, $\f$, and $f$. First we show that if we can prove \cref{affine-embeddng}, then the theorem immediately follows. To see why, note that for any $\tilde{g} \in \arg\min_{g \in L^2\left( \SS^{d-1} \right)} \left\| \P^* g - \f \right\|_2^2$ we can write,
\begin{align*}
    \left\| \Kc^{(q)}_{d} \tilde{g} - f \right\|_{\SS^{d-1}}^2 &\le (1-\epsilon/3)^{-1} \left( \left\| \P^* \tilde{g} - \f \right\|_{2}^2 + C \right)   ~~~~~~~~~~~~~~~~\text{(By \cref{affine-embeddng})}\\
    &= (1-\epsilon/3)^{-1} \left( \min_{g \in L^2\left( \SS^{d-1} \right)} \left\| \P^* g - \f \right\|_2^2 + C \right)\\
    &\le (1-\epsilon/3)^{-1} \left( \left\| \P^* f - \f \right\|_{2}^2 + C \right)\\
    &\le \frac{1+\epsilon/3}{1-\epsilon/3} \cdot \left\| \Kc^{(q)}_{d} f - f \right\|_{\SS^{d-1}}^2 ~~~~~~~~~~~~~~~~~~~~~~~~\text{(By \cref{affine-embeddng})}\\
    &\le (1 + \epsilon) \cdot \min_{g \in L^2(\SS^{d-1})} \left\| \Kc^{(q)}_{d} g - f \right\|_{\SS^{d-1}}^2,
\end{align*}
where the last inequality follows because $f \in \arg\min_{g \in L^2(\SS^{d-1})} \left\| \Kc^{(q)}_{d} g - f \right\|_{\SS^{d-1}}^2$.

Thus, in order to prove the theorem it suffices to prove that the affine embedding property in \cref{affine-embeddng} holds with probability at least $1 - \delta$.

\paragraph{Expression for Least-Squares Excess Cost.}
We first show that the least-squares objective function in \cref{eq:regression-problem} can be written as a function of the deviation from the optimum $g - f$.
More specifically, for any $g \in L^2(\SS^{d-1})$ we have,
\begin{align}
    \left\| \Kc^{(q)}_{d} g - f \right\|_{\SS^{d-1}}^2 &= \left\| \Kc^{(q)}_{d} g - \Kc^{(q)}_{d} f + \Kc^{(q)}_{d} f - f \right\|_{\SS^{d-1}}^2\nonumber\\
    &= \left\| \Kc^{(q)}_{d}( g - f) + \Kc^{(q)}_{d} f - f \right\|_{\SS^{d-1}}^2 \nonumber\\
    &= \left\| \Kc^{(q)}_{d}( g - f) \right\|_{\SS^{d-1}}^2 + \left\| \Kc^{(q)}_{d} f - f \right\|_{\SS^{d-1}}^2\nonumber\\
    &= \left\| \Kc^{(q)}_{d}( g - f) \right\|_{\SS^{d-1}}^2 + B^*, \label{excess-cost-original-regression}
\end{align}
where the third line above follows from the Pythagorean theorem because $\Kc^{(q)}_{d} (g - f) \in \Hc^{(q)}\left( \SS^{d-1} \right)$ while $\left( \Kc^{(q)}_{d}f - f \right) \perp  \Hc^{(q)}\left( \SS^{d-1} \right) $.

\paragraph{Bounding The Sampling Error.}
We now show that \cref{excess-cost-original-regression} holds approximately, even after sampling. This almost immediately yields the affine embedding bound of \cref{affine-embeddng}. We can write the discretized objective function value for any $g \in L^2(\SS^{d-1})$ as,
\begin{align}
\left\| \P^* g - \f \right\|_{2}^2 &= \left\| \P^* g - \P^* f + \P^* f - \f \right\|_{2}^2 \nonumber\\
&= \left\| \P^* (g - f) + \P^* f - \f \right\|_{2}^2\nonumber\\
&= \left\| \P^* (g - f) \right\|_{2}^2 + \left\| \P^* f - \f \right\|_{2}^2 + 2\langle \P^* (g - f) , \P^* f - \f \rangle. \label{eq:samples-least-squares-cost}
\end{align}
Let us focus on the last term above. First we show that $\Kc_d^{(q)} \cdot \P = \P$. For any $v \in \RR^s$:
\begin{align*}
    \left[ \Kc_d^{(q)} \cdot \P \cdot v \right](\sigma) &= \sum_{\ell=0}^q \frac{\alpha_{\ell,d}}{\sqrt{s \cdot |\SS^{d-1}|}} \cdot \sum_{j=1}^{s} v_j \cdot \left[ \Kc_d^{(q)} P_{d}^{\ell}\left( \langle w_j , \cdot \rangle \right) \right](\sigma) \\
    &= \sum_{\ell=0}^q \frac{\alpha_{\ell,d}}{\sqrt{s \cdot |\SS^{d-1}|}} \cdot \sum_{j=1}^{s} v_j \cdot P_{d}^{\ell}\left( \langle w_j , \sigma \rangle \right) \\
    &= [ \P \cdot v ](\sigma),
\end{align*}
where the second line follows from the definition of $\Kc_d^{(q)}$ in \cref{eq:kernel-operator-def} along with \cref{claim-gegen-kernel-properties}.
Now using the fact that $\Kc_d^{(q)} \cdot \P = \P$, we can rewrite the last term as,
\begin{align*}
    \langle \P^* (g - f) , \P^* f - \f \rangle &= \langle g - f , \P (\P^* f - \f) \rangle_{\SS^{d-1}} ~~~~~~~~~~~~~~~~~~~\text{($\P$ is the adjoint of $\P^*$)}\\
    &= \left< g - f , \Kc_d^{(q)} \cdot \P (\P^* f - \f) \right>_{\SS^{d-1}}\\
    &= \left< \Kc_d^{(q)} (g - f) ,  \P (\P^* f - \f) \right>_{\SS^{d-1}} ~~~~~~~~~~~~~~\text{($\Kc_d^{(q)}$ is self-adjoint)}
\end{align*}
By plugging the above into \cref{eq:samples-least-squares-cost} and applying Cauchy-Schwarz inequality we find that,
\begin{equation}\label{sampled-least-squares-cost-breakdown}
    \left\| \P^* g - \f \right\|_{2}^2 \in \left\| \P^* (g - f) \right\|_{2}^2 + \left\| \P^* f - \f \right\|_{2}^2 \pm 2 \left\| \Kc_d^{(q)} (g - f) \right\|_{\SS^{d-1}} \cdot \left\| \P (\P^* f - \f) \right\|_{\SS^{d-1}}.
\end{equation}
Now we bound $\left\| \P (\P^* f - \f) \right\|_{\SS^{d-1}}$. We show that this quantity is small with probability at least $1 - \delta/2$, in the following claim,
\begin{claim}[Approximate Operator Application]\label{claim:sampled-f-minus-fq-small}
With probability at least $1 - \delta/2$:
\[ \left\| \P (\P^* f - \f) \right\|_{\SS^{d-1}} \le \frac{\epsilon}{18} \cdot \sqrt{B^*}. \]
\end{claim}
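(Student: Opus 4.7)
The plan is to compute $\P(\P^* f - \f)$ explicitly, identify it as an empirical average of rank-one operators applied to the residual $r := f - f^{(q)}$, and then bound its squared norm in expectation via Markov's inequality. The key structural observation is that $r$ lies in the orthogonal complement of $\Hc^{(q)}(\SS^{d-1})$, which will force all cross terms in the expectation to vanish.

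First I would unfold definitions. A direct computation using $[\P e_j](\sigma) = \sqrt{|\SS^{d-1}|/s}\cdot \phi_{w_j}(\sigma)$ and $[\Kc_d^{(q)} f](w_j)=\langle \phi_{w_j},f\rangle_{\SS^{d-1}}$ (via \cref{lem:decompose-Gegenbauer}) yields $[\P^* f]_j = \sqrt{|\SS^{d-1}|/s}\cdot f^{(q)}(w_j)$, so that
\[
\P(\P^* f - \f) \;=\; -\frac{|\SS^{d-1}|}{s}\sum_{j=1}^{s} r(w_j)\,\phi_{w_j},
\]
where $r = f - f^{(q)}$ satisfies $\Kc_d^{(q)}r = 0$ by \cref{claim:k-square-eqaul-k}. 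Taking squared norms and using $\langle \phi_{w_j},\phi_{w_k}\rangle_{\SS^{d-1}} = \phi_{w_j}(w_k)$ (which follows from \cref{claim-gegen-kernel-properties} applied to each Gegenbauer term), I get
\[
\bigl\|\P(\P^* f - \f)\bigr\|_{\SS^{d-1}}^{2} \;=\; \frac{|\SS^{d-1}|^2}{s^2}\sum_{j,k} r(w_j)\,r(w_k)\,\phi_{w_j}(w_k).
\]

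Now I take expectation. The diagonal terms contribute $\frac{|\SS^{d-1}|^2}{s^2}\cdot s \cdot \E_w[r(w)^2 \|\phi_w\|_{\SS^{d-1}}^2]$; since $\|\phi_w\|_{\SS^{d-1}}^2 = \beta_{q,d}/|\SS^{d-1}|$ (exactly as in the proof of \cref{lem:leverage-function-value}), this equals $\frac{\beta_{q,d}}{s}\cdot B^*$. For the off-diagonal terms, independence of $w_j$ and $w_k$ and the expansion $\phi_{w_j}(w_k) = \sum_{\ell=0}^{q}\frac{\alpha_{\ell,d}}{|\SS^{d-1}|}P_d^\ell(\langle w_j,w_k\rangle)$ reduce each to a constant multiple of $\E_{w_j}\!\bigl[r(w_j)\,\alpha_{\ell,d}\E_{w_k}[r(w_k)P_d^\ell(\langle w_j,w_k\rangle)]\bigr]$. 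By \cref{lem:decompose-Gegenbauer} the inner expectation is the degree-$\ell$ spherical harmonic component of $r$ evaluated at $w_j$, which is identically zero for every $\ell\le q$ since $r$ lies in $\bigoplus_{\ell>q}\Hc_\ell(\SS^{d-1})$. Hence all cross terms drop out and $\E\bigl\|\P(\P^* f - \f)\bigr\|_{\SS^{d-1}}^2 = \frac{\beta_{q,d}}{s}B^*$.

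Finally, Markov's inequality gives $\Pr\bigl[\|\P(\P^* f - \f)\|_{\SS^{d-1}}^2 > \tfrac{2\beta_{q,d}}{\delta\, s}\,B^*\bigr] \le \delta/2$, and the choice $s = c\,\beta_{q,d}\epsilon^{-2}(\log\beta_{q,d}+\delta^{-1})$ with $c$ sufficiently large makes $\tfrac{2\beta_{q,d}}{\delta\, s} \le \tfrac{\epsilon^2}{324}$, yielding the stated bound. The main obstacle is purely the bookkeeping in the off-diagonal computation; the substantive step is recognizing that orthogonality of $r$ to $\Hc^{(q)}(\SS^{d-1})$ annihilates every off-diagonal contribution, which is what allows a simple first-moment argument (rather than a more delicate concentration inequality) to suffice.
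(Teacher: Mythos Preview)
Your proposal is correct and matches the paper's proof essentially line by line: identify $\P(\P^*f-\f)$ as $-\tfrac{|\SS^{d-1}|}{s}\sum_j r(w_j)\phi_{w_j}$, compute the expected squared norm by splitting into diagonal and off-diagonal terms, use $\|\phi_w\|_{\SS^{d-1}}^2=\beta_{q,d}/|\SS^{d-1}|$ for the former and $\Kc_d^{(q)}r=0$ for the latter, and finish with Markov. The only cosmetic difference is that the paper kills the off-diagonal terms by directly recognizing $\E_{w}[r(w)\phi_w(\sigma)]=\tfrac{1}{|\SS^{d-1}|}[\Kc_d^{(q)}r](\sigma)=0$, whereas you decompose into individual degrees $\ell$ and invoke \cref{lem:decompose-Gegenbauer}; these are the same observation phrased two ways.
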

We prove this claim later. 
Now by plugging the bound in \cref{claim:sampled-f-minus-fq-small} into \cref{sampled-least-squares-cost-breakdown} we find that,
\begin{align*}
\left\| \P^* g - \f \right\|_{2}^2 &\in \left\| \P^* (g - f) \right\|_{2}^2 + \left\| \P^* f - \f \right\|_{2}^2 \pm \frac{\epsilon}{9} \cdot \left\| \Kc_d^{(q)} (g - f) \right\|_{\SS^{d-1}} \cdot \sqrt{B^*}\\
&\in \left\| \P^* (g - f) \right\|_{2}^2 + \left\| \P^* f - \f \right\|_{2}^2 \pm \frac{\epsilon}{18} \cdot \left\| \Kc_d^{(q)} (g - f) \right\|_{\SS^{d-1}}^2 \pm \frac{\epsilon}{18} \cdot B^*,
\end{align*}
where the second line comes from the AM-GM inequality. 
Applying the operator approximation bound of \cref{lem:K-operator-approximation-leverage-score-sampling} with error parameter $\epsilon/12$ and failure probability $\delta/2$ gives that the following holds simultaneously for all $g$, with probability at least $1 - \delta$,
\[ \left\| \P^* g - \f \right\|_{2}^2 \in (1 \pm 5\epsilon/36) \cdot \left\| \Kc_d^{(q)} (g - f) \right\|_{\SS^{d-1}}^2 + \left\| \P^* f - \f \right\|_{2}^2 \pm \frac{\epsilon}{18} \cdot B^* \]
Therefore, by plugging \cref{excess-cost-original-regression} into the above inequality we find that,
\begin{align*}
    \left\| \P^* g - \f \right\|_{2}^2 &\in (1 \pm 5\epsilon/36) \cdot \left( \left\| \Kc_d^{(q)} g - f \right\|_{\SS^{d-1}}^2 - B^* \right)+ \left\| \P^* f - \f \right\|_{2}^2 \pm \frac{\epsilon}{18} \cdot B^* \\
    &\in (1 \pm 5\epsilon/36) \cdot \left\| \Kc_d^{(q)} g - f \right\|_{\SS^{d-1}}^2 - (1 \pm 7\epsilon/36) \cdot B^* + \left\| \P^* f - \f \right\|_{2}^2\\
    &\in (1 \pm \epsilon/3) \cdot \left\| \Kc_d^{(q)} g - f \right\|_{\SS^{d-1}}^2 - B^* + \left\| \P^* f - \f \right\|_{2}^2,
\end{align*}
where the last line above follows because $B^* = \left\| \Kc_d^{(q)} f - f \right\|_{\SS^{d-1}}^2 \le \left\| \Kc_d^{(q)} g - f \right\|_{\SS^{d-1}}^2$ for any $g$.
This shows that the affine embedding guarantee of \cref{affine-embeddng} holds if we let $C := - B^* + \left\| \P^* f - \f \right\|_{2}^2$ which is a quantity that only depends on $f$, $\f$, $\P^*$, and $\Kc_d^{(q)}$ and is independent of $g$.

\end{proof}

Now we prove \cref{claim:sampled-f-minus-fq-small}.

\begin{proofof}{\cref{claim:sampled-f-minus-fq-small}}
For conciseness we use $f^{(q)} := \Kc^{(q)}_{d} f$ and also define the function $\phi_w \in L^2(\SS^{d-1})$ by $\phi_w(\sigma):= \sum_{\ell=0}^q \frac{\alpha_{\ell,d}}{|\SS^{d-1}|}  P_{d}^{\ell}\left( \langle \sigma , w \rangle \right)$ for $\sigma , w \in \SS^{d-1}$. With this definition for any $v \in \RR^s$:
\[ \P \cdot v \equiv \sqrt{\frac{|\SS^{d-1}|}{s}} \cdot \sum_{j=1}^s  v_j \cdot \phi_{w_j}. \]
Furthermore, for any $f \in L^2(\SS^{d-1})$ and any $j \in [s]$,
\begin{align*}
    [\P^* f](j) &\equiv \sqrt{\frac{|\SS^{d-1}|}{s}} \cdot \langle \phi_{w_j}, f \rangle_{\SS^{d-1}}\\
    &= \sqrt{\frac{|\SS^{d-1}|}{s}} \sum_{\ell=0}^q \alpha_{\ell,d} \cdot \E_{\sigma \sim \mathcal{U}(\SS^{d-1})} \left[ P_d^\ell (\langle \sigma, w_j \rangle) \cdot f(\sigma) \right] \\
    &= \sqrt{\frac{|\SS^{d-1}|}{s}} \cdot \left[ \Kc_d^{(q)} f \right](w_j)\\
    &= \sqrt{\frac{|\SS^{d-1}|}{s}} \cdot f^{(q)}(w_j).
\end{align*}
Therefore, if we let $\y \in \RR^s$ be the vector $\y := \P^* f - \f$, we have for any $j \in [s]$
\[ \y(j) = \sqrt{\frac{|\SS^{d-1}|}{s}} \cdot \left(f^{(q)} - f\right)(w_j). \]
Additionally, for ease of notation let $y := f^{(q)} - f$.
Thus we now focus on bounding $\| \P \cdot \y \|_{\SS^{d-1}}^2$.
We start by computing the expectation of this quantity with respect to $w_1, w_2, \ldots, w_s$,
\begin{align}
    \E \left[ \| \P \cdot \y \|_{\SS^{d-1}}^2 \right] &= \E \left[ \left\| \sqrt{\frac{|\SS^{d-1}|}{s}} \cdot \sum_{j=1}^s \phi_{w_j} \cdot \y(j) \right\|_{\SS^{d-1}}^2 \right]\nonumber \\
    &= \frac{|\SS^{d-1}|^2}{s^2} \cdot \E \left[ \left\| \sum_{j=1}^s \phi_{w_j} \cdot y(w_j) \right\|_{\SS^{d-1}}^2 \right]\nonumber \\
    &= \frac{|\SS^{d-1}|^2}{s^2} \sum_{i,j \in [s]} \E_{w_i, w_j} \left[ \langle \phi_{w_i}, \phi_{w_j} \rangle_{\SS^{d-1}} \cdot y(w_i) y(w_j) \right]\nonumber\\
    &= \frac{|\SS^{d-1}|^2}{s^2} \sum_{i\in [s]} \E_{w_i} \left[ \|\phi_{w_i}\|_{\SS^{d-1}}^2 \cdot y(w_i)^2 \right] \label{norm-Py-diagonal}\\
    &\qquad + \frac{|\SS^{d-1}|^2}{s^2} \sum_{i \neq j \in [s]} \left< \E_{w_i} [ y(w_i) \cdot \phi_{w_i} ] , \E_{w_j} [ y(w_j) \cdot \phi_{w_j} ] \right>_{\SS^{d-1}}  \label{norm-Py-offdiagonal},
\end{align}
First we consider the term in \cref{norm-Py-diagonal}. By \cref{claim-gegen-kernel-properties} we can write,
\begin{align*}
    \frac{|\SS^{d-1}|^2}{s^2} \sum_{i\in [s]} \E_{w_i} \left[ \|\phi_{w_i}\|_{\SS^{d-1}}^2 \cdot y(w_i)^2 \right] &= \frac{|\SS^{d-1}|^2}{s^2} \sum_{i\in [s]} \E_{w_i} \left[ \frac{\beta_{q,d}}{|\SS^{d-1}|} \cdot y(w_i)^2 \right]\\
    &=\frac{\beta_{q,d}}{s} \cdot \| y \|_{\SS^{d-1}}^2.
\end{align*}
Next we consider the term in \cref{norm-Py-offdiagonal}. Using the definition of $y = f^{(q)} - f$, We show that for any $\sigma \in \SS^{d-1}$ and any $i \in [s]$,
\begin{align*}
    \E_{w_i} [ y(w_i) \phi_{w_i}(\sigma) ] &= \E_{w \sim \mathcal{U}(\SS^{d-1})} \left[ \sum_{\ell=0}^q \frac{\alpha_{\ell,d}}{|\SS^{d-1}|}  P_{d}^{\ell}\left( \langle \sigma , w \rangle \right) y(w) \right]\\
    &=\frac{1}{|\SS^{d-1}|} \cdot \left[ \Kc_d^{(q)} y \right](\sigma)\\
    &= \left[ \Kc_d^{(q)} (f^{(q)} - f) \right](\sigma)\\
    &= \left[ \Kc_d^{(q)} \left( \Kc_d^{(q)} f - f \right) \right](\sigma)\\
    &= 0,
\end{align*}
where the last line above follows from \cref{claim:k-square-eqaul-k}. Thus,
\[ \frac{|\SS^{d-1}|^2}{s^2} \sum_{i \neq j \in [s]} \left< \E_{w_i} [ y(w_i) \cdot \phi_{w_i} ] , \E_{w_j} [ y(w_j) \cdot \phi_{w_j} ] \right>_{\SS^{d-1}}  = 0 \]
By plugging these equalities into \cref{norm-Py-diagonal} and \cref{norm-Py-offdiagonal} we find that,
\[ \E \left[ \| \P \cdot \y \|_{\SS^{d-1}}^2 \right] = \frac{\beta_{q,d}}{s} \cdot \| y \|_{\SS^{d-1}}^2 = \frac{\beta_{q,d}}{s} \cdot \| f^{(q)} - f \|_{\SS^{d-1}}^2 = \frac{\beta_{q,d}}{s} \cdot B^* \]
Thus, by Markov's inequality and using the fact that $s = \Omega\left( \frac{ \beta_{q,d}}{\epsilon^2 \cdot \delta} \right)$, the claim follows.
\end{proofof}

\section{Efficient Algorithm for Spherical Harmonic Interpolation}\label{proofofefficientalgorithm}
In this section we prove our main theorem about our spherical harmonic interpolation algorithm.

\efficientsphericqalharmoicinterpolationtheorem*

\begin{proof}
First note that the random points $w_1, w_2, \ldots, w_s$ in line~\ref{alg-line-randompoints-w} of \cref{alg:leverage-sample-regression} are i.i.d. sample with uniform distribution on the surface of $\SS^{d-1}$. Therefore, we can invoke \cref{thm:discretizing-regression-leverage}. More specifically, if we let $\P$ be the quasi-matrix defined in \cref{thm:discretizing-regression-leverage} corresponding to the random points $w_1, w_2, \ldots, w_s$ sampled in line~\ref{alg-line-randompoints-w} and if we let $\f$ be the vector of function samples defined in line~\ref{alg-line-sampled-function} of the algorithm, then with probability at least $1 - \delta$, any optimal solution to the following least-squares problem 
\begin{equation}\label{eq:discretized-least-squares-problem}
    \tilde{g} \in \arg\min_{g \in L^2 \left( \SS^{d-1} \right)} \left\| \P^* g - \f \right\|_2^2,
\end{equation}
satisfies the following,
\begin{equation}\label{eq:approximate-regression-guarantee}
    \left\| \Kc^{(q)}_{d} \tilde{g} - f \right\|_{\SS^{d-1}}^2 \le (1+\epsilon) \cdot \min_{g \in L^2\left( \SS^{d-1} \right)} \left\| \Kc^{(q)}_{d} g - f \right\|_{\SS^{d-1}}^2.
\end{equation}
Now note that the least-squares problem in \cref{eq:discretized-least-squares-problem} has at least one optimal solution $\tilde{g}$ which is in the eigenspace of the operator $\P \P^*$. More specifically, there exists a vector $\z \in \RR^s$ such that $\tilde{g} = \P \cdot \z$ is an optimal solution for \cref{eq:discretized-least-squares-problem}. Therefore, we can focus on finding this optimal solution by solving the following least-squares problem 
\[\z \in \arg\min_{\x \in \RR^s} \left\| \P^* \P \x - \f \right\|_2^2,\] 
and then letting $\tilde{g} = \P \cdot \z$. This $\tilde{g}$ is guaranteed to be an optimal solution for \cref{eq:discretized-least-squares-problem}, thus it satisfies \cref{eq:approximate-regression-guarantee}. We solve the above least-squares problem using the kernel trick. In fact we show that $\P^* \P$ is equal to the kernel matrix $\K$ computed in line~\ref{alg-line-kernel-def} of \cref{alg:leverage-sample-regression}. To see why, note that for any $i , j \in [s]$ we have,
\begin{align*}
    \left[ \P^* \P \right]_{i,j} &= \left< \sum_{\ell=0}^q \frac{\alpha_{\ell,d}}{\sqrt{s \cdot |\SS^{d-1}|}} \cdot P_{d}^{\ell}\left( \langle w_i , \cdot \rangle \right) , \sum_{\ell=0}^q \frac{\alpha_{\ell,d}}{\sqrt{s \cdot |\SS^{d-1}|}} \cdot P_{d}^{\ell}\left( \langle w_j , \cdot \rangle \right) \right>_{\SS^{d-1}}\\
    &= \sum_{\ell=0}^q \sum_{\ell'=0}^q \frac{\alpha_{\ell,d} \alpha_{\ell',d}}{s \cdot |\SS^{d-1}|} \cdot \left< P_{d}^{\ell}\left( \langle w_i , \cdot \rangle \right), P_{d}^{\ell'}\left( \langle w_j , \cdot \rangle \right) \right>_{\SS^{d-1}}\\
    &= \sum_{\ell=0}^q \sum_{\ell'=0}^q \frac{\alpha_{\ell,d} \alpha_{\ell',d}}{s} \cdot \E_{v \sim \mathcal{U}(\SS^{d-1})} \left[ P_{d}^{\ell}\left( \langle w_i , v \rangle \right) \cdot P_{d}^{\ell'}\left( \langle w_j , v \rangle \right) \right]\\
    &= \sum_{\ell=0}^q \frac{\alpha_{\ell,d}}{s} \cdot P_{d}^{\ell}\left( \langle w_i , w_j \rangle \right) = \K_{i,j},
\end{align*}
where the fourth line above follows from \cref{claim-gegen-kernel-properties}. Therefore, we are interested in the optimal solution of the following least-squares problem 
\[\z \in \arg\min_{\x \in \RR^s} \left\| \K \x - \f \right\|_2^2.\] 
The least-squares solution to the above problem is $\z = \K^{\dagger} \f$ which is exactly what is computed in line~\ref{alg-line-opt-solution-regression} of the algorithm. Now note that, the function $\tilde{g} = \P \cdot \z$ satisfies \cref{eq:approximate-regression-guarantee}. Because $\tilde{g} = \P \cdot \z \in \Hc^{(q)}(\SS^{d-1})$ and because $\Kc^{(q)}_d$ is an orthonormal projection operator into $\Hc^{(q)}(\SS^{d-1})$, we have $\Kc^{(q)}_d \cdot \tilde{g} = \tilde{g} = \P \cdot \z$. This together with \cref{eq:approximate-regression-guarantee} imply that,
\[ \left\| \P \cdot \z - f \right\|_{\SS^{d-1}}^2 \le (1+\epsilon) \cdot \min_{g \in L^2\left( \SS^{d-1} \right)} \left\| \Kc^{(q)}_{d} g - f \right\|_{\SS^{d-1}}^2. \]
Now if we invoke \cref{claim-interpolation-via-least-squares} with $C = 1+\epsilon$ on the above inequality we find that,
\[ \left\| \P \cdot \z - f^{(q)} \right\|_{\SS^{d-1}}^2 \le \epsilon \cdot \left\| f^{(q)} - f \right\|_{\SS^{d-1}}^2. \]
Finally, one can easily see that the function $y \in \Hc^{(q)}(\SS^{d-1})$ that \cref{alg:leverage-sample-regression} outputs in line~\ref{alg-line-output-function} is exactly equal to $y = \P \cdot \z$. This completes the accuracy
bound of the theorem. 

\paragraph{Runtime and Sample Complexity.} these bounds follow from observing that:
\begin{itemize}
    \item $s\cdot d$ time is needed to generate $w_1,w_2, \ldots, w_s$ in line~\ref{alg-line-randompoints-w} of the algorithm. To do this, we first generate random Gaussian points in $\RR^d$ and then project then onto $\SS^{d-1}$ by normalizing them.
    \item $s^2 \cdot d$ operations are needed to form the kernel matrix $\K$ in line~\ref{alg-line-kernel-def} of the algorithm.
    \item $s$ queries to function $f$ are needed to form the samples vector $\f$ in line~\ref{alg-line-sampled-function} of the algorithm.
    \item $s^\omega$ time is needed to compute the least-squares solution $\z = \K^{\dagger} \f$ in line~\ref{alg-line-opt-solution-regression} of the algorithm.
    \item $s \cdot d$ operations are needed to evaluate the output function $y(\sigma)$ in line~\ref{alg-line-output-function} of the algorithm.
\end{itemize}
This completes the proof of \cref{thm:efficient-regression-alg}.
\end{proof}

\section{Lower Bound: Claims and Lemmas}\label{appndx-lower-bound}
In this section we prove the Claims and Lemmas used in our lower bound analysis for proving \cref{thm:lower-bound}.

\claimlowerboundhardinputneedszeroregressionerror*
\begin{proof}
Note that \cref{interpolation-problem} requires recovering a function $\tilde{f}^{(q)} \in \Hc^{(q)}\left( \SS^{d-1} \right)$ such that: 
\begin{equation}\label{eq:problem1-guarantee}
    \left\| \tilde{f}^{(q)} - f^{(q)} \right\|_{\SS^{d-1}}^2 \le \epsilon \cdot \left\| f^{(q)} - f \right\|_{\SS^{d-1}}^2, 
\end{equation}
where $f^{(q)} = \Kc^{(q)}_{d} f$.
Using the definition of the input function $f = \sum_{\ell=0}^q \Y_{\ell} \cdot v^{(\ell)}$, we can write,
\begin{align*}
    f^{(q)} = \Kc^{(q)}_{d} f &= \sum_{\ell=0}^q \Kc^{(q)}_{d} \cdot \Y_{\ell} \cdot v^{(\ell)}\\
    &= \sum_{\ell=0}^q \left(\sum_{\ell'=0}^q\Y_{\ell'} \Y_{\ell'}^* \right) \cdot \Y_{\ell} \cdot v^{(\ell)} \\
    &= \sum_{\ell=0}^q \Y_{\ell} \cdot v^{(\ell)} = f,
\end{align*}
where the equality in the second line above follows from \cref{eq:eigendecomposition-kernel-operator} and the addition theorem in \cref{lem:additive-formula}, and the third line follows because the operator $\Y_\ell$ has orthonormal columns and thus $\Y_{\ell'}^* \Y_{\ell} = I_{\alpha_{\ell,d}} \cdot \mathbbm{1}_{\{\ell = \ell'\}}$. Therefore, plugging this into \cref{eq:problem1-guarantee} gives,
\[ \left\| \tilde{f}^{(q)} - f \right\|_{\SS^{d-1}}^2 = \left\| \tilde{f}^{(q)} - f^{(q)} \right\|_{\SS^{d-1}}^2 \le \epsilon \cdot \|f^{(q)} - f\|_{\SS^{d-1}}^2 = \epsilon \cdot \|f - f\|_{\SS^{d-1}}^2 = 0.\]

\end{proof}

\lemlowerboundhardinstancecoeffsmustberecoverable*

\begin{proof}
By \cref{problem1-guarantee-lowerbound}, the output of the algorithm that solves \cref{interpolation-problem}, satisfies $\left\| \tilde{f}^{(q)} - f \right\|_{\SS^{d-1}}^2 =0$. Therefore, by orthonormality of the columns of the operator $\Y_{\ell}$, we can write,
\[ \Y_{\ell}^* \tilde{f}^{(q)} = \Y_{\ell}^* f + \Y_{\ell}^* (\tilde{f}^{(q)} - f) = \sum_{\ell'=0}^q \Y_{\ell}^* \Y_{\ell'} \cdot v^{(\ell')} = v^{(\ell)}. \]

\end{proof}

\end{document}